\documentclass[a4paper]{amsart}

\usepackage{color}
\usepackage{enumerate}
\usepackage{graphics}
\usepackage[colorlinks, urlcolor=blue]{hyperref}
\usepackage{mathrsfs}
\usepackage{amssymb}

\newtheorem{theorem}{Theorem}
\newtheorem*{theorem*}{Theorem}
\newtheorem*{problem*}{Problem}
\newtheorem{lemma}[theorem]{Lemma}
\newtheorem{corollary}[theorem]{Corollary}
\newtheorem{definition}[theorem]{Definition}

\newtheorem*{conjecture*}{Krein's Conjecture}

\theoremstyle{remark}

\newtheorem{remark}[theorem]{Remark}

\begin{document}

\title[Peller's problem]{Resolution of Peller's problem concerning
 Koplienko-Neidhardt trace formulae}

\author[C. Coine]{Clement Coine}
\email{clement.coine@univ-fcomte.fr}
\author[C. Le Merdy]{Christian Le Merdy}
\email{clemerdy@univ-fcomte.fr}
\author[D. Potapov]{Denis Potapov}
\email{d.potapov@unsw.edu.au}
\author[F. Sukochev]{Fedor Sukochev}
\email{f.sukochev@unsw.edu.au}
\author[A. Tomskova]{Anna Tomskova}
\email{a.tomskova@unsw.edu.au}
\address{School of Mathematics \& Statistics, University of NSW,
Kensington NSW 2052 AUSTRALIA}
\address{Laboratoire de Math\'{e}matiques, Universit\'{e} de Franche-comt\'{e},
25030 Besan\c{c}on Cedex}

\maketitle

\begin{abstract}
A formula for the norm of a bilinear Schur multiplier acting from the Cartesian product
$\mathcal S^2\times \mathcal S^2$ of two copies of
the Hilbert-Schmidt classes into the trace class
$\mathcal S^1$ is established in terms of linear Schur multipliers acting
on the space $\mathcal S^\infty$ of all compact operators.
Using this formula, we resolve Peller's problem on Koplienko-Neidhardt trace formulae.
Namely, we prove that there exist a twice continuously differentiable function $f$
with a bounded second derivative, a self-adjoint (unbounded) operator $A$
and a self-adjoint operator $B\in \mathcal S^2$ such that
$$
f(A+B)-f(A)-\frac{d}{dt}(f(A+tB))\big\vert_{t=0}\notin \mathcal S^1.
$$
\end{abstract}

\bibliographystyle{short}

\section{Introduction}

Let $\mathcal H$ be a separable complex Hilbert space and let $B(\mathcal H)$ be the space of all bounded
linear operators on $\mathcal H$ equipped with the standard trace ${\rm Tr}.$ Let $\mathcal S^1
=\mathcal S^1(\mathcal H)$
and $\mathcal S^2 = \mathcal S^2(\mathcal H)$ be the trace class and the
Hilbert-Schmidt class in $B(\mathcal H),$ respectively.

In 1953, M. G. Krein \cite{K1} showed that for a self-adjoint (not necessarily bounded) operator $A$ and a
self-adjoint operator  $B\in\mathcal S^1$ there exists a unique function $\xi\in L^1(\mathbb R)$ such that
\begin{equation}\label{KreinsFormula}{\rm Tr}
(f(A+B)-f(A))=\int_\mathbb R f'(t) \xi(t) dt,
\end{equation}
whenever $f$ is from the Wiener class $W_1$, that is $f$ is a function on $\mathbb R$ with Fourier
transform of $f'$ in $L^1(\mathbb R).$

The function $\xi$ above is called Lifshitz-Krein spectral shift function and was firstly
introduced in a special case by I. M. Lifshitz \cite{Lif}.
It plays an important role in Mathematical Physics and in Scattering Theory, where it appears in the
formula of the determinant of scattering matrix (for detailed discussion we refer to \cite{BY} and references therein).

Observe that the right-hand side of $\eqref{KreinsFormula}$ makes sense for every Lipschitz function $f$.
In 1964 M. G. Krein conjectured that the left-hand side of $\eqref{KreinsFormula}$ also makes sense
for every Lipschitz function $f$.  More precisely, Krein's conjecture was the following.

\begin{conjecture*} For any  self-adjoint (not necessarily bounded) operator
$A$, for any self-adjoint operator  $B\in\mathcal S^1$ and for any  Lipschitz function $f$,
\begin{equation}\label{KreinsConjecture}
f(A+B)-f(A)\in \mathcal S^1.
\end{equation}
\end{conjecture*}

The best result concerning the description of the class of functions for which \eqref{KreinsConjecture}
holds is due to V. Peller in \cite{PelHank}, who established that
\eqref{KreinsConjecture}  holds for $f$ belonging to the Besov class
$B^1_{\infty 1}$ (for a definition of this
class, see \cite{PelHank} and references therein).
However \eqref{KreinsConjecture} does not hold even for the absolute value function,
which is obviously the simplest example of a Lipschitz function
(see e.g. \cite{Davies}, \cite{DDPS}).
Moreover, there is an example of a continuously differentiable Lipschitz function $f$
and (bounded) self-adjoint operators $A,B$ with $B\in\mathcal S^1$ such that
\eqref{KreinsConjecture} does not hold. The first such example is due to
Yu. B. Farforovskaya \cite{F6}.

Assume now that $B$ is a self-adjoint operator from the Hilbert-Schmidt class $\mathcal S^2$.
In 1984, L. S. Koplienko, \cite{Ko1}, considered the operator
\begin{equation}\label{SecondTaylor}
f(A+B)-f(A)-\frac{d}{dt}\Big(f(A+tB)\Big)\Big|_{t=0},
\end{equation}
where by $\frac{d}{dt}\Big(f(A+tB)\Big)\Big|_{t=0}$ we denote the
derivative of the map $t\mapsto f(A+tB)$ in the
Hilbert-Schmidt norm. 
He proved that for every fixed self-adjoint
operator $A$ there exists a unique function $\eta\in L^1(\mathbb R)$ such that
\begin{equation}\label{KoplienkoFormula}
{\rm Tr} \Big(f(A+B)-f(A)-\frac{d}{dt}\Big(f(A+tB)\Big)\Big|_{t=0}\Big)
=\int_{\mathbb R} f''(t) \eta(t)dt,
\end{equation}
if $f$ is an arbitrary rational function with poles off $\mathbb R.$

The function $\eta$ is called Koplienko's spectral shift function
(for more information about Koplienko's spectral shift function we
refer to \cite{GPS2008} and references therein).

It is clear that the  right-hand side of \eqref{KoplienkoFormula} makes sense
when $f$ is a twice differentiable function with a bounded second derivative.
The natural question is then to describe the class of all these functions $f$ such that
the left-hand side of \eqref{KoplienkoFormula} is well-defined. Namely,
for which function $f$ does the operator \eqref{SecondTaylor} belong to $\mathcal S^1?$
The best result to date is again due to V. Peller \cite{Peller2005},
who established an affirmative answer under the assumption that
$f$ belongs to the Besov class $B^2_{\infty 1}$.
In the same paper \cite{Peller2005}, V. Peller stated the following problem.

\bigskip\noindent
{\bf Peller's problem.}
\cite[Problem 2]{Peller2005}
{\it Suppose that  $f$ is a twice continuously differentiable function with
a bounded second derivative. Let $A$ be a self-adjoint (possibly unbounded) operator and
let $B$ be a self-adjoint operator from $\mathcal S^2.$ Is it true that
\begin{equation}\label{MainQuestion}
f(A+B)-f(A)-\frac{d}{dt} 
\Big(f(A+tB)\Big)\Big|_{t=0} \in\mathcal S^1?
\end{equation}
}

In \cite[Theorem 4.6]{Peller2005}, the author defined
the operator in \eqref{SecondTaylor}
for all  $f\in B^2_{\infty 1}$ via an approximation process. The precise meaning of 
\eqref{SecondTaylor} in the case of an arbitrary self-adjoint operator $A$ and an 
arbitrary twice continuously differentiable function $f$  may be a subject of independent 
investigation, which is beyond the scope of the present paper. 
However when $A$ is a bounded self-adjoint operator, then the meaning of the operator in 
\eqref{SecondTaylor} is firmly established
(see e.g. \cite{BiSo1, BiSo2, BiSo3, dPS, dPSW}). 
From this it is immediate to define uniquely the operator in \eqref{SecondTaylor} 
in the case when $A$ is given by a direct sum $\oplus_{n=1}^\infty A_n$, where each 
$A_n$ is a bounded self-adjoint operator, and $B=\oplus_{n=1}^\infty B_n$
is a self-adjoint operator from $\mathcal S^2.$

In this paper we answer Peller's question in the negative
(see Section \ref{sec_PellersProblem}). More precisely we present a class of twice
continuously differentiable functions $f$ with a bounded second derivative
and self-adjoint operators $A=\oplus_{n=1}^\infty A_n$  
and $B=\oplus_{n=1}^\infty B_n$ as above, with
$B\in \mathcal S^2$, such that the operator \eqref{SecondTaylor}
does not belong to $\mathcal S^1.$ The operators $A_n$ will be finite rank.

In essence, the construction leading to these counterexamples  is finite-dimensional; 
this construction is presented in Section \ref{sec_FD}. A key component of our proof is Theorem
\ref{main theorem_finite}, which provides
a new general formula of independent interest for the norm of bilinear Schur multipliers
(see Definition \ref{def_bilinearSchur}) from $\mathcal S^2\times \mathcal S^2$
into $\mathcal S^1$, in terms of a special sequence of Schur multipliers on
$\mathcal S^\infty.$ In Section \ref{sec_Connection}
we establish preliminary results and connect
Peller's problem to bilinear Schur multipliers.

\section{Bilinear Schur multipliers on $\mathcal S^2\times \mathcal S^2$}\label{sec_Schur}

We regard elements of $B(\ell^2)$ as infinite matrices in the usual way
and we let $\|\cdot\|_\infty$ denote the uniform norm on this space.
By $\mathcal S^{p}$ we denote the Schatten von Neumann ideal in
$B(\ell^2)$ equipped with the Schatten $p$-norm $\|\cdot\|_{p}$, $1\le
p\le\infty$.

Likewise for any $n\in\mathbb N$, we let $M_n$ denote the space of all
$n\times n$ matrices with entries in $\mathbb C$, equipped with the uniform norm
$\|\cdot\|_\infty$, and we use the notation $\mathcal S^{p}_n$ to denote that
space equipped with the  $p$-norm $\|\cdot\|_{p}$.

We let $E_{ij}$ denote the standard matrix units either on
$B(\ell^2)$ or on $M_n$, for $i,j\geq 1$ of for  $1\leq i,j\leq n$.

Let $1\le p\le\infty$.
A matrix $M=\{m_{ij}\}_{i,j\ge 1}$
with entries in $\mathbb C$ is said to be a {\it (linear) Schur multiplier on $\mathcal S^p$} if
the following action
$$
M(A):=\sum_{i,j\ge 1}m_{ij}a_{ij}E_{ij}, \ \
A=\{a_{ij}\}_{i,j\ge 1}\in \mathcal S^p,
$$
defines a  bounded linear operator on $\mathcal S^p.$

Clearly, for the matrix $M=\{m_{ij}\}_{i,j\ge 1}$ to be a linear 
Schur multiplier on $\mathcal S^p$ it is necessary that
$\sup_{i,j\ge 1}|m_{ij}|<\infty$.
When $p=2$,  this condition is sufficient, that is,
a matrix $M=\{m_{ij}\}_{i,j\ge 1}$ is a linear Schur multiplier on
$\mathcal S^2$ if and only if $\sup_{i,j\ge 1} |m_{ij}|<\infty$. Moreover
$$
\|M:\mathcal S^2\to \mathcal S^2\|\,=\,\sup_{i,j\ge 1}|m_{ij}|
$$
in this case (see e.g. \cite[Proposition~2.1]{A-1982}).

A simple duality argument shows that if $1\leq p,p'\leq\infty$ are conjugate numbers,
then a matrix $M$ is a linear Schur multiplier on
$\mathcal S^p$ if and only if it is a linear Schur multiplier on
$\mathcal S^{p'}$. Moreover the resulting operators have the same norm, that is,
$\Vert M\colon \mathcal S^p\to \mathcal S^p\Vert = \Vert M\colon
\mathcal S^{p'}\to \mathcal S^{p'}\Vert$. Linear Schur multipliers
on either $\mathcal S^1$ or $\mathcal S^\infty$ have the following
description (see e.g. \cite[Theorem 5.1]{PisierBook}
or \cite[Theorem 6.4]{Bennett1977}).

\begin{theorem}\label{Multipliers on S_infty}
A matrix $M=\{m_{ij}\}_{i,j\ge 1}$ is a linear Schur multiplier on
$\mathcal S^\infty$ (equivalently, on $\mathcal S^1$) if and only if
there exist a Hilbert space $E$ and two bounded sequences
$(\xi_i)_{i\geq 1}$ and $(\eta_j)_{j\geq 1}$ in $E$ such that
\begin{equation}\label{Factor}
m_{ij} = \langle \xi_i,\eta_j\rangle,
\qquad i,j\geq 1.
\end{equation}
Moreover
$$
\|M:\mathcal S^\infty\to \mathcal S^\infty\|\,=\,\inf\bigl\{\sup_i\Vert \xi_i\Vert\,
\sup_j\Vert \eta_j\Vert\bigr\},
$$
where the infimum runs over all possible factorizations (\ref{Factor}).
\end{theorem}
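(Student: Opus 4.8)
The statement is the classical description of Schur multipliers on $\mathcal S^\infty$ through Hilbertian factorization (as in the cited references). The plan is to prove the two implications separately: from a factorization (\ref{Factor}) one builds a bounded multiplier with the stated norm bound, and conversely a bounded multiplier is forced to factor; the converse is where the real work lies.

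\emph{Step 1, from a factorization to a multiplier.} Suppose $m_{ij}=\langle\xi_i,\eta_j\rangle$ with $c:=\sup_i\|\xi_i\|$ and $d:=\sup_j\|\eta_j\|$; we may assume $E$ is separable with orthonormal basis $(e_k)_{k\ge1}$ and write $\xi_i^k=\langle\xi_i,e_k\rangle$, $\eta_j^k=\langle\eta_j,e_k\rangle$, so that $m_{ij}=\sum_k\xi_i^k\overline{\eta_j^k}$. Introducing, for each $k$, the diagonal operators $D_k=\mathrm{diag}\,(\xi_i^k)_i$ and $\widetilde D_k=\mathrm{diag}\,(\overline{\eta_j^k})_j$ on $\ell^2$, a direct computation on a finitely supported matrix $A$ gives
\begin{equation}\label{rowcol}
M(A)=\sum_k D_k A\,\widetilde D_k=R\,(A\otimes\mathrm{Id}_{\ell^2})\,Q ,
\end{equation}
where $R=[\,D_1\ D_2\ \cdots\,]\colon\ell^2(\ell^2)\to\ell^2$ is the associated row operator and $Q=[\,\widetilde D_1;\widetilde D_2;\cdots\,]\colon\ell^2\to\ell^2(\ell^2)$ the column operator (the series converges entrywise, hence in norm, $M(A)$ having finite support). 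Since $RR^*=\sum_k D_kD_k^*$ is diagonal with entries $\sum_k|\xi_i^k|^2=\|\xi_i\|^2$ one gets $\|R\|=c$, and likewise $\|Q\|=d$; as $\|A\otimes\mathrm{Id}\|=\|A\|_\infty$, identity (\ref{rowcol}) yields $\|M(A)\|_\infty\le cd\,\|A\|_\infty$. Finitely supported matrices being dense in $\mathcal S^\infty$ and mapped by $M$ into $\mathcal S^\infty$, this shows $M$ is a multiplier on $\mathcal S^\infty$ with $\|M:\mathcal S^\infty\to\mathcal S^\infty\|\le cd$, which already gives one inequality in the norm formula.

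\emph{Step 2, from a multiplier to a factorization.} Here I would first reduce to finite dimensions: if $M$ is a multiplier on $\mathcal S^\infty$ of norm $C$, then compressing by the projection onto the first $N$ coordinates shows each truncation $M^{(N)}=\{m_{ij}\}_{1\le i,j\le N}$ is a multiplier on $\mathcal S^\infty_N$ with norm $\le C$. For fixed $N$ one has to show that $M^{(N)}$ admits a factorization $m_{ij}=\langle\xi^{(N)}_i,\eta^{(N)}_j\rangle$ in a finite-dimensional Hilbert space with $\max_i\|\xi^{(N)}_i\|=\max_j\|\eta^{(N)}_j\|\le C^{1/2}$. The set $\mathcal C_N\subseteq M_N$ of matrices admitting such a factorization with vectors of norm $\le1$ is compact, convex and balanced, and by Step 1 it lies inside the unit ball of the multiplier norm; the point is the reverse inclusion. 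The plan for that is to identify Schur multipliers on $M_N$ with the completely bounded maps on $M_N$ that are bimodule maps over the diagonal, and to invoke the structure theorem for completely bounded maps (Wittstock--Haagerup--Paulsen), which represents such a map as a compression of $T\mapsto a\,\pi(T)\,b$ with $\pi$ an ampliation and $a,b$ contractions; reading this off on the matrix units $E_{ij}$ produces the vectors with the right norm control. (A self-contained finite-dimensional route via a Grothendieck-type / semidefinite duality argument also works.) Finally, to assemble a single factorization of $M$ from the uniform finite-dimensional ones, I would fix a free ultrafilter $\mathcal U$ on $\mathbb N$ and pass to the Hilbert space ultraproduct along $\mathcal U$: for each $i$ the family $(\xi^{(N)}_i)_{N\ge i}$ defines an element $\xi_i$ with $\|\xi_i\|\le C^{1/2}$, similarly $\eta_j$, and since $\langle\xi^{(N)}_i,\eta^{(N)}_j\rangle=m_{ij}$ for every $N\ge\max(i,j)$ this gives $\langle\xi_i,\eta_j\rangle=m_{ij}$ exactly and $\sup_i\|\xi_i\|\,\sup_j\|\eta_j\|\le C$. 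Together with Step 1 this yields both the characterization and the equality of norms.

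The hard part is Step 2, and specifically the step that boundedness of $M$ on $\mathcal S^\infty$ forces the factorization: Step 1 gives only one inequality, and the reverse is \emph{not} a formal consequence of Hahn--Banach separation --- it genuinely needs the input that Schur multipliers are automatically completely bounded with equal norm (equivalently, the explicit form of completely bounded diagonal module maps). The remaining ingredients --- the row/column factorization, the truncation, and the ultraproduct limit --- are routine.
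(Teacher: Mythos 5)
The paper does not prove this statement: it is quoted as a classical result, with the proof delegated to the cited references (Pisier's book, Theorem~5.1, and Bennett's paper, Theorem~6.4), so there is no internal argument to compare yours against. Your sketch is essentially the standard proof from those sources and is correct in outline. Step~1 (the row--column factorization $M(A)=\sum_k D_kA\widetilde D_k=R(A\otimes\mathrm{Id})Q$ with $\|R\|=\sup_i\|\xi_i\|$, $\|Q\|=\sup_j\|\eta_j\|$) is complete and gives the inequality $\|M\|\le\inf\{cd\}$. Step~2 correctly reduces to truncations and reassembles via an ultraproduct, but its core --- that a multiplier of norm $\le C$ on $M_N$ factors with vectors of norm $\le C^{1/2}$ --- is only named, not proved: you invoke the identification of Schur multipliers with completely bounded diagonal-bimodule maps and the Wittstock--Haagerup--Paulsen representation. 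That is precisely the content of the cited theorem, so your argument is honest about where the weight sits but does not discharge it; this is acceptable here exactly because the paper itself treats the whole statement as known. Two minor remarks: your parenthetical claim that the converse ``is not a formal consequence of Hahn--Banach separation'' is a little strong --- a separation/duality argument on the compact convex set $\mathcal C_N$ does work, though the computation of its polar is where the Grothendieck-type content hides; and you do not address the parenthetical ``(equivalently, on $\mathcal S^1$)'', which the paper disposes of by the duality remark immediately preceding the theorem, so nothing is lost.
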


Except for the cases $p=1,2,\infty$ mentioned above, there is
no known description of linear Schur multipliers on
$\mathcal S^p$.

The terminology below is adopted from \cite{ER}, where multilinear Schur
products are defined and studied in the context of completely bounded maps.

\begin{definition}\label{def_bilinearSchur}
Let $1\le r\le\infty$.
A three-dimensional matrix $M=\{m_{ikj}\}_{i,k,j\ge 1}$
with entries in $\mathbb C$ is said to be a {\it bilinear
Schur multiplier into $\mathcal S^r$} if
the following action
$$
M(A,B):=\sum_{i,j,k\ge 1} m_{ikj}a_{ik}b_{kj}E_{ij}, \quad
A=\{a_{ij}\}_{i,j\ge 1}, B=\{b_{ij}\}_{i,j\ge 1}\in \mathcal S^2,
$$
defines a bounded bilinear operator from
$\mathcal S^2\times \mathcal S^2$ into $\mathcal S^r.$
\end{definition}

Of course we can define as well a notion of bilinear Schur multiplier from
$\mathcal S^p\times\mathcal S^q$ into $\mathcal S^r$, whenever $1\leq p,q,r\leq \infty$.
The case when $p=q=r=\infty$ is the object of \cite{ER}.
The main aim of this section is to give a criteria when a matrix
$M$ is a bilinear Schur multiplier from
$\mathcal S^2\times \mathcal S^2$ into $\mathcal S^1$
(see Theorems \ref{main theorem_finite}, \ref{main theorem}, and
Corollary \ref{main-factor} below). Before coming to this, we mention another
(easier) case which will used in Section 5.

\begin{lemma}\label{always bounded}
A matrix  $M=\{m_{ikj}\}_{i,k,j\ge 1}$ is a
bilinear Schur multiplier into $\mathcal S^2$ if and only if
$\sup_{i,j,k\ge 1} |m_{ikj}|<\infty.$ Moreover,
$$
\|M:\mathcal
S^2\times \mathcal S^2\to\mathcal S^2\|=\sup_{i,j,k\ge 1}
|m_{ikj}|.
$$
\end{lemma}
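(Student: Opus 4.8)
The plan is to compute the norm of the bilinear map $M\colon\mathcal S^2\times\mathcal S^2\to\mathcal S^2$ directly by testing on matrix units and then checking that the supremum bound is attained on general Hilbert--Schmidt inputs. First I would observe that for fixed $k$, the ``slice'' action $(A,B)\mapsto\sum_{i,j}m_{ikj}a_{ik}b_{kj}E_{ij}$ produces, as $i,j$ vary, a matrix whose $(i,j)$ entry is $m_{ikj}a_{ik}b_{kj}$; crucially, the outputs coming from distinct values of $k$ are supported, row-by-row through the factor $a_{ik}$ and column-by-column through $b_{kj}$, in a way that makes them ``Hilbert--Schmidt orthogonal'' after one more reduction. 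The cleanest route is to fix $k$ first: for a single $k$, the matrix $\{a_{ik}\}_i$ is just a column vector $u=(a_{ik})_i\in\ell^2$ and $\{b_{kj}\}_j$ is a row vector $v=(b_{kj})_j\in\ell^2$, so the $k$-th contribution is the rank-one-type matrix with entries $m_{ikj}u_i v_j$, whose Hilbert--Schmidt norm squared is $\sum_{i,j}|m_{ikj}|^2|u_i|^2|v_j|^2\le \bigl(\sup_{i,j}|m_{ikj}|\bigr)^2\|u\|_2^2\|v\|_2^2$.

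Next I would assemble these slices. Writing $\gamma:=\sup_{i,j,k}|m_{ikj}|$, the key point is that $M(A,B)=\sum_k M_k$ where $M_k$ has entries $m_{ikj}a_{ik}b_{kj}$, and one checks that the $M_k$ have pairwise orthogonal "ranges" in the sense needed: actually it is simplest to bound $\|M(A,B)\|_2^2=\sum_{i,j}\bigl|\sum_k m_{ikj}a_{ik}b_{kj}\bigr|^2$ and apply Cauchy--Schwarz in $k$ against the weights $|b_{kj}|$, or alternatively to note $\|M(A,B)\|_2\le\sum_k\|M_k\|_2\le\gamma\sum_k\|(a_{ik})_i\|_2\|(b_{kj})_j\|_2\le\gamma\|A\|_2\|B\|_2$ by one more Cauchy--Schwarz over $k$ (the two sequences $k\mapsto\|(a_{ik})_i\|_2$ and $k\mapsto\|(b_{kj})_j\|_2$ are in $\ell^2$ with norms $\|A\|_2$ and $\|B\|_2$ respectively, since $A,B\in\mathcal S^2$). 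This gives $\|M\colon\mathcal S^2\times\mathcal S^2\to\mathcal S^2\|\le\gamma$, and in particular finiteness of $\gamma$ is sufficient.

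For the reverse inequality and the necessity of $\gamma<\infty$, I would test on rank-one inputs: fixing indices $i_0,k_0,j_0$, take $A=E_{i_0k_0}$ and $B=E_{k_0j_0}$, so that $\|A\|_2=\|B\|_2=1$ and $M(A,B)=m_{i_0k_0j_0}E_{i_0j_0}$, whence $\|M(A,B)\|_2=|m_{i_0k_0j_0}|$. Taking the supremum over $i_0,k_0,j_0$ shows $\|M\colon\mathcal S^2\times\mathcal S^2\to\mathcal S^2\|\ge\gamma$, and also that boundedness forces $\gamma<\infty$. Combining the two inequalities yields the claimed identity.

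The proof is essentially routine, so there is no serious obstacle; the only mild subtlety is making sure the two applications of Cauchy--Schwarz (once inside each slice over $i$ and $j$, once across the slices over $k$) are organised so that the bound comes out as $\gamma\|A\|_2\|B\|_2$ rather than something weaker — but since everything decouples neatly as soon as one fixes $k$, this is straightforward. One should also remark that the argument is identical in the finite-dimensional setting, replacing $\ell^2$ by $\mathbb C^n$ and the sums over $i,j,k$ by finite sums, which is the form in which the lemma will be used in Section~\ref{sec_FD}.
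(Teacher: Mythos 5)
Your proposal is correct and essentially the same as the paper's proof: the lower bound tests on matrix units $M(E_{ik},E_{kj})=m_{ikj}E_{ij}$ exactly as in the paper, and the upper bound is the same two applications of Cauchy--Schwarz, just organised slightly differently (you first sum the $\mathcal S^2$-norms of the $k$-slices via the triangle inequality and then apply Cauchy--Schwarz over $k$, whereas the paper keeps everything inside $\|\cdot\|_2^2$ and applies Cauchy--Schwarz to $\bigl|\sum_k m_{ikj}a_{ik}b_{kj}\bigr|^2$ directly — a variant you yourself note as the alternative). Both bound the norm by $\gamma\|A\|_2\|B\|_2$ and the argument is sound.
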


\begin{proof} The inequality $\|M:\mathcal
S^2\times \mathcal S^2\to\mathcal S^2\|\le \sup_{i,j,k\ge 1}
|m_{ikj}|$ is achieved by the following computation. Consider
$A=\{a_{ik}\}_{i,k\ge 1}$ and $B=\{b_{kj}\}_{k,j\ge 1}$ in $\mathcal
S^2$. Then applying the Cauchy-Schwarz inequality, we have
\begin{align*}
\|M(A,B)\|_2^2
&
=\Big\|\sum_{i,j,k\ge 1}
m_{ikj}a_{ik}b_{kj}E_{ij}\Big\|_2^2 = \sum_{i,j\ge
1}\Big|\sum_{k\ge 1} m_{ikj}a_{ik}b_{kj}\Big|^2 \\
&
\le \sup_{i,j,k\ge 1} |m_{ikj}|^2\sum_{i,j\ge 1}\Big(\sum_{k\ge 1}
|a_{ik}b_{kj}|\Big)^2\\
&
\le \sup_{i,j,k\ge 1} |m_{ikj}|^2\sum_{i,j\ge
1}\sum_{k\ge 1} |a_{ik}|^2\sum_{k\ge 1}|b_{kj}|^2\\\
&
\leq \sup_{i,j,k\ge 1} |m_{ikj}|^2\|A\|_2^2\|B\|_2^2.
\end{align*}
The converse inequality is obtained from
$$
\|M:\mathcal
S^2\times \mathcal S^2\to\mathcal S^2\|\ge
\|M(E_{ik},E_{kj})\|_{2}=|m_{ikj}|,
$$
taking the supremum over all $i,j,k\ge 1.$
\end{proof}

We now focus on bilinear Schur multipliers into $\mathcal S^1$.
We start with some background on tensor products.
Given any two Banach spaces $X$ and $Y$, we let
$X\otimes Y$ denote their algebraic tensor product.
For every $u\in X\otimes Y$, the projective
tensor norm of $u$ is defined as
$$
\pi(u):=\inf\Bigl\{\sum\limits_{i=1}^m \|x_i\|\|y_i\|: \
u=\sum\limits_{i=1}^m x_i\otimes y_i, \ m\in\mathbb N\Bigr\}.
$$
Then the completion of $X\otimes Y$ equipped with the norm $\pi$ is called
the projective tensor product of $X$ and $Y$ and is denoted
by $X\widehat{\otimes} Y$.

Let $Z$ be another Banach space and let
$B_2(X\times Y, Z)$ denote the space of all bounded
bilinear operators from $X\times Y$ into $Z$, equipped with the
uniform norm. Next let $B(X\widehat{\otimes} Y, Z)$ denote
the Banach space of all bounded linear operators
from $X\widehat{\otimes} Y$ into $Z$, equipped with the uniform norm.  Then
we have an isometric isomorphism
\begin{equation}\label{Proj}
B_2(X\times Y, Z) =  B(X\widehat{\otimes} Y, Z),
\end{equation}
which is given by $T\mapsto \tilde T$, where $\tilde T(x\otimes y)=T(x,y)$  for any $x\in
X$ and $y\in Y$ (see e.g. \cite[Theorem 2.9]{Ryan}).

Let $\mathcal H$ be a Hilbert space and let $\overline{\mathcal H}$
denote its conjugate space.
For any $h_1,h_2$ in $\mathcal H$, we may identify
$\overline{h_1}\otimes h_2$ with the operator
$h\mapsto \langle h,h_1\rangle h_2\,$ from $\mathcal H$ into $\mathcal H$.
This yields an identification of $\overline{\mathcal H}\otimes \mathcal H$ with the space
of finite rank operators on $\mathcal H$, and this
identification extends to an isometric isomorphism
\begin{equation}\label{S1}
\overline{\mathcal H}\widehat{\otimes} \mathcal H\, =\, S^1(\mathcal H),
\end{equation}
see e.g. \cite[p. 837]{Palmer}.

In the sequel, we regard $M_{n^2}$ as the space of
matrices with columns and rows indexed by $\{1,\ldots, n\}^2$. Thus we write
$E_{(i,k),(j,l)}$ for its standard matrix units.
Then we let $M_n\otimes_{\rm min} M_n$ denote the minimal tensor product
of two copies of $M_n$. According to the definition of $\otimes_{\rm min}$
(see e.g. \cite[IV.4.8]{Tak}),
the isomorphism $J_0\colon M_n\otimes_{\rm min}  M_n
\to M_{n^2}$ given by
\begin{equation}\label{JO}
J_0(E_{ij}\otimes E_{kl})= E_{(i,k),(j,l)}, \qquad
1\leq i,j,k,l\leq n,
\end{equation}
is an isometry.

We now give some duality principles. First we recall that
${\mathcal S^1_n}^*$ is isometrically isomorphic to $M_n$ through the duality
pairing
\begin{equation}\label{Dual}
\mathcal S^1_n\times M_n\to {\mathbb C},\qquad (A,B)\mapsto {\rm Tr}\bigl({}^t AB\bigr).
\end{equation}
With this convention (note the use of transposition),
the dual basis of $(E_{ij})_{1\leq i,j\leq n}$
is $(E_{ij})_{1\leq i,j\leq n}$ itself.

Next we let $\gamma$ be the cross norm on $\mathcal S^1_n\otimes \mathcal S^1_n$
such that
\begin{equation}\label{gamma}
\bigl(\mathcal S^1_n\otimes_\gamma \mathcal S^1_n\bigr)^* =
M_n\otimes_{\rm min} M_n,
\end{equation}
through the duality pairing (\ref{Dual}) applied twice. More explicitly,
for any family $(t_{ijkl})_{1\leq i,j,k,l\leq n}$ of complex numbers, we have
\begin{align*}
\gamma\biggl(\sum_{i,j,k,l=1}^n & t_{ijkl} E_{ij}\otimes E_{kl}\biggr)  = \\
& \sup\biggl\{\Bigl|\sum_{i,j,k,l=1}^n t_{ijkl}s_{ijkl}\Big| \, :\,
\Big\|\sum_{i,j,k,l=1}^n s_{ijkl} E_{ij}\otimes
E_{kl}\Big\|_{M_n\otimes_{\rm min} M_n}\le 1\biggr\}.
\end{align*}

\begin{lemma}\label{l2}
The isomorphism $J\colon\mathcal S^2_n\widehat{\otimes}\mathcal S^2_n\to
\mathcal S^1_n\otimes_\gamma \mathcal S^1_n$ given by
$$
J(E_{ik}\otimes E_{jl})=
E_{ij}\otimes E_{kl},\qquad 1\leq i,j,k,l\leq n,
$$
is an isometry.
\end{lemma}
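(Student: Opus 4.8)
The plan is to identify both sides with the same space of linear maps between matrix algebras and check that the map $J$ is exactly the canonical identification, hence an isometry. First I would describe the left-hand side. By the identification \eqref{S1} (in its finite-dimensional form $\mathcal S^2_n = \overline{\mathbb C^n}\widehat\otimes\mathbb C^n$, but more directly $\mathcal S^2_n$ is a Hilbert space of dimension $n^2$), the projective tensor product $\mathcal S^2_n\widehat\otimes\mathcal S^2_n$ of two Hilbert spaces is isometrically the trace class $\mathcal S^1(\mathcal S^2_n)$ of operators on the $n^2$-dimensional Hilbert space $\mathcal S^2_n$; under this identification $E_{ik}\otimes E_{jl}$ corresponds to the rank-one operator $\overline{E_{ik}}\otimes E_{jl}$, i.e. $X\mapsto \langle X, E_{ik}\rangle_2\, E_{jl}$. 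Equivalently, and this is the form I would actually use, $B_2(\mathcal S^2_n\times\mathcal S^2_n,\mathbb C) = (\mathcal S^2_n\widehat\otimes\mathcal S^2_n)^*$ isometrically by \eqref{Proj} together with the fact that a Hilbert space is its own dual, so $(\mathcal S^2_n\widehat\otimes\mathcal S^2_n)^* = \mathcal S^2_n\otimes_{\rm something}\mathcal S^2_n$; but since for Hilbert spaces projective and injective norms need care, the cleanest route is: $(\mathcal S^2_n\widehat\otimes\mathcal S^2_n)^* \cong B(\mathcal S^2_n,\mathcal S^2_n)$ isometrically, with the operator norm, via $T\mapsto S_T$ where $\langle S_T X, Y\rangle_2$ encodes $T$.

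Next I would compute the dual of the right-hand side. By \eqref{gamma}, $\bigl(\mathcal S^1_n\otimes_\gamma\mathcal S^1_n\bigr)^* = M_n\otimes_{\rm min} M_n$, and by \eqref{JO} the latter is isometrically $M_{n^2}$, which acts as bounded operators on the Hilbert space $\mathbb C^n\otimes\mathbb C^n$ with its Hilbert-Schmidt inner product; that Hilbert space is canonically $\mathcal S^2_n$ via $e_i\otimes e_j\leftrightarrow E_{ij}$. So both $(\mathcal S^2_n\widehat\otimes\mathcal S^2_n)^*$ and $(\mathcal S^1_n\otimes_\gamma\mathcal S^1_n)^*$ are isometrically $B(\mathcal S^2_n)$ with the operator norm. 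The key bookkeeping step is to track where the basis vectors go: on the left, the dual pairing of $E_{ik}\otimes E_{jl}$ with an operator $S\in B(\mathcal S^2_n)$ is $\langle S E_{ik}, E_{jl}\rangle_2$ (the $(jl),(ik)$ matrix entry of $S$ in the $E$-basis); on the right, using the pairing \eqref{Dual} twice, the dual pairing of $E_{ij}\otimes E_{kl}$ with an element $\sum s_{(p,r)(q,s)} E_{pq}\otimes E_{rs}$ of $M_n\otimes_{\rm min}M_n = M_{n^2}$ extracts the coefficient $s_{(i,k)(j,l)}$, which under $J_0$ is exactly the $(i,k),(j,l)$ matrix entry of the corresponding operator in $B(\mathbb C^n\otimes\mathbb C^n) = B(\mathcal S^2_n)$. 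Matching the two: $J$ sends $E_{ik}\otimes E_{jl}$ to $E_{ij}\otimes E_{kl}$ precisely so that the adjoint $J^*\colon M_n\otimes_{\rm min}M_n\to B(\mathcal S^2_n)$ (after the identifications above) is the identity on $B(\mathcal S^2_n)$. Since $J$ is a bijection between finite-dimensional spaces whose adjoint is an isometry, $J$ is an isometry.

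The main obstacle — and the part requiring the most care — is the index juggling: the swap $(E_{ik}\otimes E_{jl})\mapsto(E_{ij}\otimes E_{kl})$ looks innocuous but it is exactly the transpose/reshuffle that turns ``Hilbert-Schmidt factorization through $\mathbb C^n$'' on the $\mathcal S^2\times\mathcal S^2$ side into ``matrix of a linear operator on $\mathcal S^2_n$'' on the other side, and one must verify that the two pairings, once all transpositions in \eqref{Dual} are accounted for, genuinely produce the same bilinear form on $B(\mathcal S^2_n)\times B(\mathcal S^2_n)$ rather than a conjugate or transposed version of it. Concretely I would: (i) fix $S\in B(\mathcal S^2_n)$ and write out $\langle S E_{ik},E_{jl}\rangle_2 = S_{(jl),(ik)}$; (ii) compute $\langle J_0^{-1}(\text{matrix of }S),\ \cdot\ \rangle$ against $E_{ij}\otimes E_{kl}$ under the double pairing \eqref{Dual}, obtaining the same scalar; (iii) conclude $\langle u, S\rangle = \langle Ju, S'\rangle$ where $S'$ is the image of $S$ under the canonical isometry $B(\mathcal S^2_n)\to B(\mathcal S^2_n)$ induced by $e_i\otimes e_j\leftrightarrow E_{ij}$, hence $\|u\|_{\mathcal S^2_n\widehat\otimes\mathcal S^2_n} = \sup_{\|S\|\le1}|\langle u,S\rangle| = \sup_{\|S'\|\le1}|\langle Ju,S'\rangle| = \gamma(Ju)$. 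Everything else is a routine unwinding of \eqref{Proj}, \eqref{S1}, \eqref{JO} and \eqref{gamma}.
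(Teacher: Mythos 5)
Your proposal is correct and is essentially the paper's own argument viewed from the dual side: the paper factors $J=J_2^{-1}J_1$ through $\mathcal S^1_{n^2}$, using \eqref{S1} to make $J_1$ an isometry and the fact that $J_2^{-1}$ is the adjoint of the isometry $J_0$ of \eqref{JO}, whereas you equivalently identify both duals $(\mathcal S^2_n\widehat{\otimes}\mathcal S^2_n)^*$ and $(\mathcal S^1_n\otimes_\gamma\mathcal S^1_n)^*$ with $B(\mathcal S^2_n)=M_{n^2}$ and check that $J^*$ is the canonical identification. The transposition bookkeeping you flag is harmless for exactly the reason you give (a transpose is isometric for the operator norm on $M_{n^2}$, so the supremum over the unit ball is unchanged), and the remaining steps are the routine unwindings you describe.
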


\begin{proof}
According to the equality
$$
\Bigl\Vert \sum_{i,k} c_{ik} E_{ik}\Bigr\Vert_2 = \Bigl(\sum_{i,k}\vert
c_{ik}\vert^2\Bigr)^{\frac12},\qquad c_{ik}\in {\mathbb C},
$$
we can naturally identify $\mathcal S^2_n$ with either $\ell^2_{n^2}$ or its conjugate space.
Then applying the identity (\ref{S1}) with $\mathcal H=\ell^2_{n^2}$,
we obtain that
the mapping $J_1\colon\mathcal S^2_n\widehat{\otimes}\mathcal S^2_n\to\mathcal S^1_{n^2}$ given
by
$$
J_1(E_{ik}\otimes E_{jl}) = E_{(i,k),(j,l)},\qquad
1\leq i,j,k,l\leq n,
$$
is an isometry.

Now let $J_2\colon \mathcal S^1_n\otimes_\gamma \mathcal S^1_n \to \mathcal S^1_{n^2}$
be the isomorphism given
by
$$
J_2(E_{ij}\otimes E_{kl}) = E_{(i,k),(j,l)},\qquad
1\leq i,j,k,l\leq n.
$$
Taking into account the identity (\ref{gamma}), we see
that $J_2^{-1}$ is
the adjoint of $J_0$. Consequently, $J_2^{-1}$ is an isometry. Since
$J=J_2^{-1}J_1$, we deduce that $J$ is an isometry as well.
\end{proof}

We will work with the subspace of $M_n\otimes_{\rm min} M_n$ spanned
by the $E_{rk}\otimes E_{ks}$, for $1\leq r,k,s\leq n$. The next lemma
provides a description of this subspace.
We let $(e_1,\ldots,e_n)$ denote the standard basis of $\ell^\infty_n$.

\bigskip\noindent
\begin{lemma}\label{l3}
The linear mapping $\theta\colon
\ell^\infty_n(M_n)\to M_n\otimes_{\rm min} M_n$
such that
$$
\theta(e_k\otimes E_{rs})= E_{rk}\otimes E_{ks},
\qquad 1\leq k,r,s\leq n,
$$
is an isometry.
\end{lemma}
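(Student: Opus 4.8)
The plan is to identify $\ell^\infty_n(M_n)$ with a concrete subspace of $M_n\otimes_{\rm min} M_n$ and to check that $\theta$ preserves the norm by testing against the natural operator-space structure. Recall that $M_n\otimes_{\rm min} M_n = M_n(M_n) = M_{n^2}$ via $J_0$, so an element $x\in M_n\otimes_{\rm min}M_n$ has a block-matrix form, and the element $\sum_{k,r,s} x^{(k)}_{rs}\, E_{rk}\otimes E_{ks}$ corresponds under $J_0$ to the $n^2\times n^2$ matrix whose $((i,k),(j,l))$-entry vanishes unless $i=k$ and $j=l$, in which case it equals $x^{(k)}_{kl}$ — wait, let me recompute: $J_0(E_{rk}\otimes E_{ks}) = E_{(r,k),(k,s)}$, so the surviving entries are those indexed by $((r,k),(k,s))$. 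Reindexing the rows of $M_{n^2}$ by pairs, the image of $\theta$ consists precisely of those matrices supported on the rows $(r,k)$ and columns $(k,s)$ with the \emph{middle} index matched; grouping by this common middle index $k$ exhibits the image as a direct sum (an $\ell^\infty$-sum, since these blocks sit on disjoint rows \emph{and} disjoint columns of $M_{n^2}$) of $n$ copies of $M_n$, the $k$-th copy being spanned by $E_{(r,k),(k,s)}$ for $1\le r,s\le n$. This is exactly the statement that $\theta$ is an isometry.

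The key steps I would carry out in order: first, write $f = \sum_k e_k\otimes F_k \in \ell^\infty_n(M_n)$ with $F_k = \{f^{(k)}_{rs}\}\in M_n$, so $\|f\| = \max_k \|F_k\|_\infty$; second, compute $J_0\theta(f) = \sum_k \sum_{r,s} f^{(k)}_{rs}\, E_{(r,k),(k,s)} \in M_{n^2}$; third, observe that the nonzero entries of this matrix occupy rows in the set $R_k = \{(r,k): 1\le r\le n\}$ and columns in $C_k = \{(k,s): 1\le s\le n\}$, and that for $k\ne k'$ the sets $R_k, R_{k'}$ are disjoint and likewise $C_k, C_{k'}$ are disjoint; fourth, conclude that, after the obvious permutation of the rows and columns of $M_{n^2}$, the matrix $J_0\theta(f)$ is block-diagonal with diagonal blocks $F_1,\dots,F_n$ (padded by zero rows/columns if one insists on square blocks, which does not affect the norm); fifth, use that the operator norm of a block-diagonal matrix is the maximum of the norms of its blocks, so $\|\theta(f)\|_{M_n\otimes_{\rm min}M_n} = \|J_0\theta(f)\|_\infty = \max_k\|F_k\|_\infty = \|f\|_{\ell^\infty_n(M_n)}$. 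Linearity and injectivity of $\theta$ are clear from the formula, so this gives the isometric embedding.

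I do not expect a genuine obstacle here; the only point requiring a little care is the bookkeeping with the double indices $(i,k)$ versus $(k,l)$ and making sure the index that gets matched in $E_{rk}\otimes E_{ks}$ is the one that ends up labelling the block (the ``middle'' index $k$), not one of the free indices $r,s$. A secondary subtlety is that the blocks $F_k$ sit in $M_{n^2}$ on disjoint rows \emph{and} disjoint columns, so together with the zero entries elsewhere they genuinely form a block-diagonal (not merely block-triangular) matrix after permutation; this is what licenses the $\ell^\infty$ formula for the norm rather than some larger quantity. Once these indexing points are pinned down, the computation in the third and fourth steps is routine. Alternatively, one can phrase the whole argument abstractly: $\ell^\infty_n(M_n)$ is the range of the completely contractive projection on $M_n\otimes_{\rm min}M_n$ that, in the $M_n(M_n)$ picture, sends a block matrix $[x_{ij}]$ (entries $x_{ij}\in M_n$) to the diagonal block-matrix whose $k$-th block records only the $(k,\cdot)$-th rows of the blocks — but the direct row/column-disjointness computation is the cleanest self-contained route and is the one I would write up.
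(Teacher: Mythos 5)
Your proof is correct and reaches the same conclusion by a genuinely different route. The paper, after applying the same isometry $J_0$, verifies both inequalities directly: the upper bound $\|\theta(y)\|\le\max_k\|y_k\|$ comes from an explicit Cauchy--Schwarz estimate on $\langle J_0\theta(y)b,a\rangle$ for $a,b\in\ell^2_{n^2}$, and the lower bound $\|y_{k_0}\|\le\|\theta(y)\|$ is obtained by plugging in test vectors $a,b$ supported on the row set $R_{k_0}$ and column set $C_{k_0}$. You instead observe the underlying structure directly: $J_0\theta(f)$ has nonzero entries only at positions $((r,k),(k,s))$, and since the row sets $R_k=\{(r,k)\}_r$ are pairwise disjoint in $k$ and likewise the column sets $C_k=\{(k,s)\}_s$, a permutation of the rows and columns of $M_{n^2}$ turns $J_0\theta(f)$ into a block-diagonal matrix with diagonal blocks $F_1,\dots,F_n$; the equality $\|\theta(f)\|=\max_k\|F_k\|_\infty$ is then the standard fact about block-diagonal operator norms. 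The two arguments encode the same geometry (the paper's choice of test vectors for the lower bound is exactly ``look at one block''), but yours makes the block-diagonal structure explicit, which is arguably cleaner and explains why the $\ell^\infty$-formula appears. One small simplification: your blocks are already square $n\times n$ matrices, since each $R_k$ and each $C_k$ has exactly $n$ elements, so the parenthetical about padding by zero rows/columns is unnecessary.
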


\begin{proof}
Take $y=\sum_{k=1}^n e_k\otimes y_k\in \ell_n^\infty(M_n),$ where
$y_k=\sum_{r,s=1}^n y_k(r,s)E_{rs}.$ From the definition of
$\theta$ we have
$$
\theta(y)=\sum_{r,s,k=1}^n y_k(r,q)E_{rk}\otimes E_{ks}.
$$
Recall the isometric isomorphism $J_0$ given by (\ref{JO}).
Then
$$
J_0\theta(y)=\sum_{r,s,k=1}^n y_k(r,s)E_{(r,k),(k,s)}.
$$
Let $a=\{a_{rk}\}_{r,k=1}^n, b=\{b_{ls}\}_{l,s=1}^n\in \ell^2_{n^2}.$
Then we have
$$
\bigl\langle J_0\theta(y)b,a\bigr\rangle
= \sum_{r,s,k=1}^n y_k(r,s)\langle E_{(r,k),(k,s)}(b),a \rangle =
\sum_{r,s,k=1}^n y_k(r,s) a_{rk}b_{ks}.
$$
Therefore, using Cauchy-Schwarz, we obtain
\begin{align*}
\bigl\vert \bigl\langle J_0\theta(y)b,a\bigr\rangle\bigr\vert
&
\leq \sum_{k=1}^n\Big|\sum_{r,s=1}^n
y_k(r,s) a_{rk}b_{ks}\Big|   \\
&
\le \sum_{k=1}^n
\|y_k\|\Big(\sum_{r=1}^n |a_{rk}|^2\Big)^{\frac12}
\Big(\sum_{s=1}^n |b_{ks}|^2\Big)^{\frac12} \\
&
\le \max_{1\le k\le
n}\|y_k\| \sum_{k=1}^n \Big(\sum_{r=1}^n |a_{rk}|^2\Big)^{\frac12}
\Big(\sum_{s=1}^n |b_{ks}|^2\Big)^{\frac12}\\
&
\le \max_{1\le k\le n}\|y_k\| \Big(\sum_{k,r=1}^n |a_{rk}|^2\Big)^{\frac12}
\Big(\sum_{k,s=1}^n |b_{ks}|^2\Big)^{\frac12}\\
&
\leq \max_{1\le k\le
n}\|y_k\| \|a\|_2 \|b\|_2.
\end{align*}
It follows that $\|\theta(y)\| \le \max_{1\le k\le n}\|y_{k}\|.$

Now fix $1\le k_0\le n.$ Take arbitrary
$\alpha=\{\alpha_r\}_{r=1}^n$ and
$\beta=\{\beta_s\}_{s=1}^n$ in $\ell^2_n$. Then define
$$
a_{rk} :=\left\{\begin{array}{cl}
\alpha_r,& \text{if} \ \ k=k_0 \\
0 & \text{otherwise}\end{array}\right., \quad
b_{ls} :=\left\{\begin{array}{cl}
\beta_s,& \text{if} \ \ l=k_0 \\
0 & \text{otherwise}\end{array}\right..
$$
Then
$$
\bigl\langle J_0\theta(y)b,a\bigr\rangle=
\langle y_{k_0}(\beta),\alpha\rangle
$$
and moreover,
$\Vert a\Vert _2=\Vert \alpha \Vert _2$,
$\Vert b\Vert _2=\Vert \beta \Vert _2$.
Therefore, we have $\|y_{k_0}\|\le \|\theta(y)\|.$
Hence, $\|\theta(y)\|\ge \max_{1\le k\le n}\|y_{k}\|.$
\end{proof}

The following theorem is the main result of this section.

\begin{theorem}\label{main theorem_finite} Let $n\in\mathbb N.$
Let $M=\{m_{ikj}\}_{i,k,j=1}^n$ be a three-dimensional matrix.
For any $1\leq k\leq n$, let $M(k)$ be the (classical) matrix given by
$M(k)=\{m_{ikj}\}_{i,j=1}^n$.
Then
$$
\bigl\Vert M:\mathcal S^2_n\times \mathcal S^2_n\to \mathcal S^1_n\bigr\Vert
=\sup_{1\le k\le n}\bigl\Vert  M(k): M_n\to M_n\bigr\Vert.
$$
\end{theorem}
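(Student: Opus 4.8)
The plan is to reduce the bilinear norm to the norm of a single linear map, transport that norm through the isometric identifications supplied by Lemmas \ref{l2} and \ref{l3}, and then evaluate it by a duality argument.

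First I would linearize. By (\ref{Proj}), the bilinear map $M$ corresponds to a linear map $\widetilde M\colon\mathcal S^2_n\widehat\otimes\mathcal S^2_n\to\mathcal S^1_n$ of the same norm, determined on matrix units by $\widetilde M(E_{pq}\otimes E_{rs})=\delta_{qr}\,m_{pqs}\,E_{ps}$. Composing with the isometric isomorphism $J$ of Lemma \ref{l2}, whose inverse sends $E_{ij}\otimes E_{kl}$ to $E_{ik}\otimes E_{jl}$, produces a linear map
\[
N:=\widetilde M\circ J^{-1}\colon\ \mathcal S^1_n\otimes_\gamma\mathcal S^1_n\longrightarrow\mathcal S^1_n,\qquad N(E_{ij}\otimes E_{kl})=\delta_{jk}\,m_{ikl}\,E_{il},
\]
which satisfies $\Vert N\Vert=\Vert\widetilde M\Vert=\bigl\Vert M\colon\mathcal S^2_n\times\mathcal S^2_n\to\mathcal S^1_n\bigr\Vert$.

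The heart of the argument is to pass to the adjoint. Since all spaces involved are finite dimensional, $\Vert N\Vert=\Vert N^*\Vert$, and by (\ref{gamma}) together with the duality pairing (\ref{Dual}) the operator $N^*$ maps $M_n={\mathcal S^1_n}^*$ into $M_n\otimes_{\rm min}M_n=(\mathcal S^1_n\otimes_\gamma\mathcal S^1_n)^*$. Computing the relevant pairings on matrix units, and keeping careful track of the transposition in (\ref{Dual}), I expect to find that for every $C=\{c_{il}\}_{i,l=1}^n\in M_n$,
\[
N^*(C)=\sum_{i,j,l=1}^n m_{ijl}\,c_{il}\,E_{ij}\otimes E_{jl}=\theta\Bigl(\sum_{k=1}^n e_k\otimes M(k)(C)\Bigr),
\]
where $\theta$ is the isometry of Lemma \ref{l3} and $M(k)(C)=\sum_{i,l=1}^n m_{ikl}\,c_{il}\,E_{il}$ is the image of $C$ under the linear Schur multiplier $M(k)$. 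Granting this identity, Lemma \ref{l3} gives $\Vert N^*(C)\Vert_{M_n\otimes_{\rm min}M_n}=\max_{1\le k\le n}\Vert M(k)(C)\Vert_\infty$; taking the supremum over $\Vert C\Vert_\infty\le1$ and interchanging it with the finite maximum yields $\Vert N^*\Vert=\sup_{1\le k\le n}\bigl\Vert M(k)\colon M_n\to M_n\bigr\Vert$, which together with $\Vert M\Vert=\Vert N\Vert=\Vert N^*\Vert$ is exactly the assertion.

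The only genuine obstacle is the index bookkeeping in the computation of $N^*(C)$: one must verify that $N^*(C)$ lands in the subspace of $M_n\otimes_{\rm min}M_n$ parametrized by $\theta$ in Lemma \ref{l3}, and that its $k$-th slice is precisely $M(k)(C)$. This is the step where the transpositions built into the pairing (\ref{Dual}) must be handled with care and where an index slip would most easily occur; once the factorization $N^*(C)=\theta\bigl(\sum_{k=1}^n e_k\otimes M(k)(C)\bigr)$ is established, the remainder is a mechanical chain of isometric identifications.
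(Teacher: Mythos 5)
Your proposal is correct and follows essentially the same route as the paper: the operator you call $N^*$ is exactly the paper's $T_M=(\widetilde MJ^{-1})^*$, and the factorization $N^*(C)=\theta\bigl(\sum_{k=1}^n e_k\otimes M(k)(C)\bigr)$ you anticipate is precisely the identity the paper verifies on matrix units before invoking Lemma \ref{l3}. The index computation you flag as the remaining obstacle does work out as you state, so there is no gap.
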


\begin{proof}
According to the isometric identity (\ref{Proj}),
the bilinear map $M:\mathcal S^2_n\times \mathcal S^2_n\to \mathcal S^1_n$
induces a linear map
$\widetilde{M}\colon\mathcal S^2_n\widehat{\otimes}\mathcal S^2_n\to \mathcal S^1_n$
with $\Vert M\Vert  = \Vert\widetilde{M}\Vert$. Consider
$$
T_M = (\widetilde{M}J^{-1})^*\colon M_n\to M_n\otimes_{\rm min} M_n,
$$
where $J$ is given by Lemma \ref{l2}. The latter implies that
\begin{equation}\label{T}
\Vert T_M\Vert = \bigl\Vert M\colon \mathcal S^2_n\times\mathcal  S^2_n \to\mathcal  S^1_n\bigr\Vert.
\end{equation}
For any $1\leq r,s \leq n$, we have
\begin{align*}
\bigl\langle T_M(E_{rs}), E_{ij}\otimes E_{kl}\bigr\rangle & =
\bigl\langle E_{rs},
\widetilde{M}J^{-1}(E_{ij}\otimes E_{kl})\bigr\rangle\\
& =
\bigl\langle E_{rs},
\widetilde{M}(E_{ik}\otimes E_{jl})\bigr\rangle\\
& =
\left\{\begin{array}{cl}
m_{ikl}\langle E_{rs}, E_{il}\rangle,& \text{if} \ \ k=j \\
0 & \text{otherwise}\end{array}\right.\\
&
=\left\{\begin{array}{cl}
m_{ikl},& \text{if} \ \ k=j, \ r=i, \ s=l \\ 0 &
\text{otherwise}\end{array}\right.,
\end{align*}
for all $1\leq i,j,k,l\leq n$.
Hence
$$
T_M(E_{rs}) = \sum_{k=1}^n m_{rks} E_{rk}\otimes E_{ks}.
$$
This shows that $T_M$ maps into the range of the operator $\theta$
introduced in Lemma \ref{l3} and that
$$
T_M(E_{rs})=\sum_{k=1}^n m_{rks}\,\theta(e_k\otimes E_{rs}).
$$
By linearity this implies that for any $C\in M_n$,
$$
T_M(C) = \theta\biggl(\sum_{k=1}^n e_k\otimes [M(k)](C)\biggr).
$$
Appyling Lemma \ref{l3}, we deduce that
$$
\Vert T_M(C)\Vert = \max_k \bigl\Vert [M(k)](C)\bigr\Vert,\qquad C\in M_n.
$$
From this identity we obtain that $\Vert T_M\Vert = \max_k \Vert M(k)\Vert$. Combining with
(\ref{T}) we obtain the desired
identity $\Vert M\Vert = \max_k \Vert M(k)\Vert $.
\end{proof}

For the sake of completeness we give an infinite dimensional
version of the previous theorem.

\begin{theorem}\label{main theorem}
A three-dimensional matrix $M=\{m_{ikj}\}_{i,k,j\ge 1}$
is a bilinear Schur multiplier into
$\mathcal S^1$ if and only if the matrix
$M(k)=\{m_{ikj}\}_{i,j\ge 1}$ is a linear Schur multiplier on $\mathcal
S^\infty$ for every $k\ge 1$ and $\sup_{k\ge 1}\|
M(k):\mathcal
S^\infty\to \mathcal
S^\infty\|<\infty.$ Moreover,
$$
\bigl\Vert M:\mathcal S^2\times \mathcal S^2\to
\mathcal S^1\bigr\Vert
=\sup_{k\ge 1}\,\bigl\Vert  M(k):\mathcal
S^\infty\to\mathcal S^\infty\bigr\Vert
$$
in this case.
\end{theorem}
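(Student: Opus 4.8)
The plan is to deduce the infinite-dimensional statement from the finite-dimensional Theorem~\ref{main theorem_finite} by a truncation argument. For $N\in\mathbb N$, let $M^{(N)}=\{m_{ikj}\}_{i,k,j=1}^N$ be the truncation of $M$, and for each $k$ let $M(k)^{(N)}=\{m_{ikj}\}_{i,j=1}^N$ be the corresponding truncation of $M(k)$. Theorem~\ref{main theorem_finite} gives, for every $N$,
$$
\bigl\Vert M^{(N)}:\mathcal S^2_N\times \mathcal S^2_N\to \mathcal S^1_N\bigr\Vert
=\sup_{1\le k\le N}\bigl\Vert M(k)^{(N)}:M_N\to M_N\bigr\Vert .
$$
First I would record two elementary compression/monotonicity facts. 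On the bilinear side: embedding $\mathcal S^2_N$ into $\mathcal S^2$ as the top-left corner, the formal action of $M$ on finitely supported $A,B$ reduces to a finite sum lying in the corner, so it coincides with $M^{(N)}(A,B)$; since the corner projection is a contraction on $\mathcal S^1$, the quantity $\Vert M^{(N)}\Vert$ is nondecreasing in $N$ and is bounded above by $\Vert M:\mathcal S^2\times\mathcal S^2\to\mathcal S^1\Vert$ whenever $M$ is a bilinear Schur multiplier into $\mathcal S^1$. On the linear side: the same corner argument for Schur multipliers on $\mathcal S^\infty$, together with density of finite rank operators in $\mathcal S^\infty=\mathcal K(\ell^2)$, shows that $\Vert M(k)^{(N)}:M_N\to M_N\Vert$ is nondecreasing in $N$ and $\sup_N\Vert M(k)^{(N)}:M_N\to M_N\Vert=\Vert M(k):\mathcal S^\infty\to\mathcal S^\infty\Vert$ (a matrix whose finite corners are uniformly operator-bounded by $c$ defines a bounded operator of norm $\le c$, and if each corner of $M(k)(X)$ approximates it in norm for compact $X$, then $M(k)(X)$ is compact).

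For the forward direction, assume $M$ is a bilinear Schur multiplier into $\mathcal S^1$ and set $c=\Vert M:\mathcal S^2\times\mathcal S^2\to\mathcal S^1\Vert$. Fix $k$; for every $N\ge k$ the displayed identity and the first monotonicity fact give $\Vert M(k)^{(N)}:M_N\to M_N\Vert\le\Vert M^{(N)}\Vert\le c$. Letting $N\to\infty$ and using the second fact, $M(k)$ is a linear Schur multiplier on $\mathcal S^\infty$ with $\Vert M(k):\mathcal S^\infty\to\mathcal S^\infty\Vert\le c$; hence $\sup_k\Vert M(k):\mathcal S^\infty\to\mathcal S^\infty\Vert\le\Vert M\Vert$. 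For the converse, assume $c:=\sup_{k\ge1}\Vert M(k):\mathcal S^\infty\to\mathcal S^\infty\Vert<\infty$. Then each $M(k)^{(N)}$ has norm at most $c$ on $M_N$, so the displayed identity yields $\Vert M^{(N)}:\mathcal S^2_N\times\mathcal S^2_N\to\mathcal S^1_N\Vert\le c$ for all $N$. Given $A=\{a_{ij}\},B=\{b_{ij}\}\in\mathcal S^2$, Cauchy--Schwarz shows $\sum_k|a_{ik}b_{kj}|<\infty$ for each $i,j$, and the partial sum $\sum_{i,j,k\le N}m_{ikj}a_{ik}b_{kj}E_{ij}$ equals $M^{(N)}(A^{(N)},B^{(N)})$, where $A^{(N)},B^{(N)}$ are the corner truncations of $A,B$. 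Since $\Vert A^{(N)}-A\Vert_2,\Vert B^{(N)}-B\Vert_2\to0$ and $\sup_N\Vert M^{(N)}\Vert\le c$, a routine bilinear estimate shows this sequence is Cauchy in $\mathcal S^1$; thus the series defining $M(A,B)$ converges in $\mathcal S^1$ with $\Vert M(A,B)\Vert_1\le c\,\Vert A\Vert_2\Vert B\Vert_2$, so $M$ is a bilinear Schur multiplier into $\mathcal S^1$ with $\Vert M\Vert\le c$. Combined with the forward inequality, this gives the asserted norm identity.

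The step I expect to need the most care is the consistency check in the converse: verifying that the bounded bilinear map obtained as the $\mathcal S^1$-limit of the finite-section operators genuinely coincides, on all of $\mathcal S^2\times\mathcal S^2$, with the formal triple-series action $M(A,B)=\sum_{i,j,k}m_{ikj}a_{ik}b_{kj}E_{ij}$, i.e.\ controlling the convergence of this a priori only conditionally convergent series in the trace norm. The other slightly delicate point is establishing the identity $\sup_N\Vert M(k)^{(N)}:M_N\to M_N\Vert=\Vert M(k):\mathcal S^\infty\to\mathcal S^\infty\Vert$, which hinges on the contractivity of corner compressions on $\mathcal S^\infty$ together with the norm-density of finite rank operators in $\mathcal S^\infty$; everything else reduces to Theorem~\ref{main theorem_finite} and straightforward estimates.
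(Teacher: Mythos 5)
Your proposal is correct and follows essentially the same truncation-and-density strategy the paper uses: reduce to Theorem~\ref{main theorem_finite} via finite corners, exploiting contractivity of corner compressions on $\mathcal S^1$ and $\mathcal S^\infty$ and density of finite sections in $\mathcal S^2$ and $\mathcal S^\infty$. The paper's proof simply invokes a ``standard density argument'' where you (correctly) spell out the monotonicity of the truncated norms, the Cauchy estimate in $\mathcal S^1$, and the consistency of the limit with the formal triple series.
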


\begin{proof}
Consider a three-dimensional matrix $M=\{m_{ikj}\}_{i,k,j\ge 1}$ and set
$M(k)=\{m_{ikj}\}_{i,j\ge 1}$. For any $n\geq 1$, let
$$
M_{(n)}=\{m_{ikj}\}_{1\leq i,j\leq n}\quad\hbox{and}\quad
M_{(n)}(k)=\{m_{ikj}\}_{1\leq i,k,j\leq n}
$$
be the standard truncations of these matrices.

We may identify
$\mathcal S^2_n$ (respectively $\mathcal S^\infty_n$) with the subspace
of $\mathcal S^2$ (respectively $\mathcal S^\infty$) spanned by
$\{E_{ij}\, :\, 1\leq i,j\leq n\}$. Then the union $\cup_{n\geq 1}\mathcal S^2_n$
is dense in $\mathcal S^2$. Hence by a standard density argument,
$M$ is a bilinear Schur multiplier into
$\mathcal S^1$ if and only if
$\sup_{n\geq 1}\Vert M_{(n)}:\mathcal S^2_n\times \mathcal S^2_n\to
\mathcal S^1_n\bigr\Vert\,<\infty$, and in this case
$$
\bigl\Vert M:\mathcal S^2\times \mathcal S^2\to
\mathcal S^1\bigr\Vert\, =\,
\sup_{n\geq 1}\,\bigl\Vert M_{(n)}:\mathcal S^2_n\times \mathcal S^2_n\to
\mathcal S^1_n\bigr\Vert.
$$
Likewise $\cup_{n\geq 1}\mathcal S^\infty_n$
is dense in the space $\mathcal S^\infty$ of all compact operators,
for any $k\geq 1$
$M(k)$ is a linear Schur multiplier on $\mathcal
S^\infty$ if and only if
$\sup_{n\geq 1}\Vert M_{(n)}(k):\mathcal S^\infty_n \to
\mathcal S^\infty_n\bigr\Vert\,<\infty$, and
$$
\bigl\Vert M(k) :\mathcal S^\infty \to
\mathcal S^\infty\bigr\Vert\, =\,
\sup_{n\geq 1}\bigl\Vert M_{(n)}(k) :\mathcal S^\infty_n
\to \mathcal S^\infty_n\bigr\Vert.
$$
in this case.

Combining the above two approximation results with Theorem
\ref{main theorem_finite}, we obtain the result.
\end{proof}

Theorem \ref{main theorem} together with Theorem \ref{Multipliers on S_infty} yield the following result.

\begin{corollary}\label{main-factor}
A three-dimensional matrix $M=\{m_{ikj}\}_{i,k,j\ge 1}$
is a bilinear Schur multiplier into
$\mathcal S^1$ if and only if there exist a Hilbert space $E$
and two bounded families
$(\xi_{ik})_{i,k\geq 1}$ and $(\eta_{jk})_{j,k\geq 1}$ in $E$ such that
$$
m_{ikj} = \langle \xi_{ik},\eta_{jk}\rangle,
\qquad i,k,j\geq 1.
$$
Moreover
$$
\bigl\Vert M:\mathcal S^2 \times \mathcal S^2 \to \mathcal S^1\bigr\Vert\,=\,
\inf\bigl\{\sup_{i,k}\Vert \xi_{ik}\Vert\,
\sup_{j,k}\Vert \eta_{jk}\Vert\bigr\},
$$
where the infimum runs over all possible such factorizations.
\end{corollary}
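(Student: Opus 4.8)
The plan is to combine the matricial identity of Theorem~\ref{main theorem} with the factorization description of $\mathcal S^\infty$-multipliers provided by Theorem~\ref{Multipliers on S_infty}, applied uniformly in the parameter $k$. First I would prove the ``if'' direction. Suppose we are given a Hilbert space $E$ and bounded families $(\xi_{ik})_{i,k\ge 1}$, $(\eta_{jk})_{j,k\ge 1}$ in $E$ with $m_{ikj}=\langle\xi_{ik},\eta_{jk}\rangle$. Fix $k\ge1$. Then the families $(\xi_{ik})_{i\ge1}$ and $(\eta_{jk})_{j\ge1}$ are bounded sequences in $E$ realizing the factorization $m_{ikj}=\langle\xi_{ik},\eta_{jk}\rangle$ of the matrix $M(k)=\{m_{ikj}\}_{i,j\ge1}$. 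By Theorem~\ref{Multipliers on S_infty}, $M(k)$ is a linear Schur multiplier on $\mathcal S^\infty$ and
$$
\bigl\Vert M(k):\mathcal S^\infty\to\mathcal S^\infty\bigr\Vert\le
\bigl(\sup_i\Vert\xi_{ik}\Vert\bigr)\bigl(\sup_j\Vert\eta_{jk}\Vert\bigr)
\le\bigl(\sup_{i,k}\Vert\xi_{ik}\Vert\bigr)\bigl(\sup_{j,k}\Vert\eta_{jk}\Vert\bigr).
$$
Taking the supremum over $k$ shows $\sup_k\Vert M(k)\Vert<\infty$, so by Theorem~\ref{main theorem} $M$ is a bilinear Schur multiplier into $\mathcal S^1$, with $\Vert M\Vert=\sup_k\Vert M(k)\Vert$ bounded above by $(\sup_{i,k}\Vert\xi_{ik}\Vert)(\sup_{j,k}\Vert\eta_{jk}\Vert)$. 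Since this holds for every such factorization, $\Vert M\Vert\le\inf\{\sup_{i,k}\Vert\xi_{ik}\Vert\,\sup_{j,k}\Vert\eta_{jk}\Vert\}$.

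For the ``only if'' direction and the reverse norm inequality, suppose $M$ is a bilinear Schur multiplier into $\mathcal S^1$. By Theorem~\ref{main theorem}, each $M(k)$ is a linear Schur multiplier on $\mathcal S^\infty$ with $c:=\sup_k\Vert M(k)\Vert=\Vert M\Vert<\infty$. Fix $\varepsilon>0$. For each $k$, Theorem~\ref{Multipliers on S_infty} furnishes a Hilbert space $E_k$ and bounded sequences $(\xi^{(k)}_i)_{i\ge1}$, $(\eta^{(k)}_j)_{j\ge1}$ in $E_k$ with $m_{ikj}=\langle\xi^{(k)}_i,\eta^{(k)}_j\rangle$ and $\sup_i\Vert\xi^{(k)}_i\Vert\,\sup_j\Vert\eta^{(k)}_j\Vert\le\Vert M(k)\Vert+\varepsilon\le c+\varepsilon$. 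After rescaling each pair $(\xi^{(k)}_i,\eta^{(k)}_j)$ by a positive constant (which leaves the inner products unchanged), we may assume $\sup_i\Vert\xi^{(k)}_i\Vert=\sup_j\Vert\eta^{(k)}_j\Vert\le(c+\varepsilon)^{1/2}$ for every $k$. Now set $E:=\bigoplus_{k\ge1}E_k$ (Hilbertian direct sum) and define $\xi_{ik}:=\xi^{(k)}_i$ and $\eta_{jk}:=\eta^{(k)}_j$, viewed as elements of $E$ supported on the $k$-th summand. Then $\langle\xi_{ik},\eta_{jk}\rangle_E=\langle\xi^{(k)}_i,\eta^{(k)}_j\rangle_{E_k}=m_{ikj}$, while $\sup_{i,k}\Vert\xi_{ik}\Vert\le(c+\varepsilon)^{1/2}$ and $\sup_{j,k}\Vert\eta_{jk}\Vert\le(c+\varepsilon)^{1/2}$, so $\sup_{i,k}\Vert\xi_{ik}\Vert\,\sup_{j,k}\Vert\eta_{jk}\Vert\le c+\varepsilon$. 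Letting $\varepsilon\to0$ gives $\inf\{\sup_{i,k}\Vert\xi_{ik}\Vert\,\sup_{j,k}\Vert\eta_{jk}\Vert\}\le c=\Vert M\Vert$, which together with the first part yields equality.

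The only mildly delicate point — which I would state carefully but not belabor — is the passage from a family of factorizations on the spaces $E_k$ to a single factorization on one fixed Hilbert space $E$; this is precisely what the Hilbertian direct sum $E=\bigoplus_k E_k$ accomplishes, with orthogonality across the summands ensuring that the cross inner products vanish and that the relevant suprema are preserved. Note that it is essential that the inner product $\langle\xi_{ik},\eta_{jk}\rangle$ pairs vectors carrying the same index $k$; it is this structural feature, inherited from the ``diagonal'' form of the bilinear Schur product $\sum_{i,j,k}m_{ikj}a_{ik}b_{kj}E_{ij}$ and already visible in the formula $T_M(E_{rs})=\sum_k m_{rks}E_{rk}\otimes E_{ks}$ from the proof of Theorem~\ref{main theorem_finite}, that makes the direct-sum construction legitimate. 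All other steps are routine bookkeeping with Theorems~\ref{Multipliers on S_infty} and~\ref{main theorem}.
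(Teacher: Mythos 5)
Your proof is correct and follows exactly the route the paper intends: the paper gives no details beyond the remark that the corollary follows from combining Theorem \ref{main theorem} with Theorem \ref{Multipliers on S_infty}, and your argument — including the rescaling and the Hilbertian direct sum $E=\bigoplus_k E_k$ to merge the $k$-indexed factorizations into one — is the standard way to make that combination precise.
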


\section{Schur multipliers associated with a function and self-adjoint operators}\label{sec_Connection}

Throughout this section we work with finite-dimensional operators. We fix an integer
$n\geq 1$ and regard  $\mathbb{C}^n$ as equipped with its standard Hermitian structure.

Consider two orthonormal bases $e=\{e_j\}_{j=1}^n$ and $e'=\{e'_i\}_{i=1}^n$ in $\mathbb C^n$.
Then every linear
operator $A\in B(\mathbb{C}^n)$ is associated with a matrix
$A=\{a_{ij}\}_{i,j= 1}^n,$ where $a_{ij}=\langle A(e_j), e'_i\rangle.$
Sometimes we use the notation $a_{ij}^{e',e}$ instead of
$a_{ij}$ to emphasize corresponding bases.

For any  unit vector $x\in \mathbb{C}^n$ we let $P_x$
denote the projection on the  linear span of $x,$ that is,
$P_x(y)=\langle y,x\rangle x,$  $y\in \mathbb{C}^n.$

\subsection{Linear Schur multipliers}\label{LSM}

 Let $A_0,A_1\in B(\mathbb C^n)$ be diagonalizable
self-adjoint operators. For $j=0,1$, let $\xi_j=\{\xi_i^{(j)}\}_{i=1}^n$
be an orthonormal basis of eigenvectors for $A_j$, and let
$\{\lambda_i^{(j)}\}_{i=1}^n$ be the associated
$n$-tuple of eigenvalues, that is, $A_j(\xi_i^{(j)})=\lambda_i^{(j)}\xi_i^{(j)}$.
Without loss of generality, we assume that $\{\lambda_i^{(j)}\}_{i=1}^{n_j}$ is the set
of pairwise distinct eigenvalues of the operator $A_j$,
where $n_j\in\mathbb N,$ $n_j\le n$. Denote
\begin{equation}\label{spectral}
E^{(j)}_i=\sum_{\substack{ k=1\\ \lambda_k^{(j)}=
\lambda_i^{(j)}}}^n P_{\xi_k^{(j)}}, \quad 1\le i\le n_j,
\end{equation}
that is, $E^{(j)}_i$ is a spectral projection
of the operator $A_j$ associated with the eigenvalue
$\lambda_i^{(j)}$.

Let $\phi:\mathbb R^2\to \mathbb C$ be a bounded Borel function.
Define a linear operator
$T_\phi^{A_0,A_1}:B(\mathbb C^n)\to B(\mathbb C^n)$ given by
\begin{equation}\label{linearMOI}
T_\phi^{A_0,A_1}(X)=\sum_{i,k=1}^{n} \phi(\lambda_i^{(0)},
\lambda_k^{(1)})P_{\xi_i^{(0)}}XP_{\xi_k^{(1)}}, \
\ X\in B(\mathbb C^n).
\end{equation}
Alternatively, when it is more convenient, we will
use the representation of $T_\phi^{A_0,A_1}(X)$ in the form
\begin{equation}\label{linearMOI_in spectral proj}
T_\phi^{A_0,A_1}(X)
=\sum_{i=1}^{n_0} \sum_{k=1}^{n_1}  \phi(\lambda_i^{(0)},
\lambda_k^{(1)})E^{(0)}_iXE^{(1)}_k, \ \ X\in B(\mathbb C^n).
\end{equation}

It is not difficult to see that if we identify $B(\mathbb C^n)$ with $M_n$
by associating $X$ with the matrix
$\{x_{ik}^{\xi_0,\xi_1}\}_{i,k=1}^n$,
then the operator $T_\phi^{A_0,A_1}$ acts as a linear Schur multiplier
$\{\phi(\lambda_i^{(0)}, \lambda_k^{(1)})\}_{i,k=1}^n$.  Indeed,
$$
\bigl\langle (P_{\xi_i^{(0)}}XP_{\xi_k^{(1)}})(\xi_s^{(1)}),\xi_r^{(0)}\bigr\rangle =
\left\{\begin{array}{cl}
\langle X(\xi_s^{(1)}),\xi_r^{(0)}\rangle=
x_{rs}^{\xi_0,\xi_1},& \text{if} \ \ s=k,\, r=i, \\
0 & \text{otherwise.}\end{array}\right.
$$
Therefore,
$$
\bigl\langle T_\phi^{A_0,A_1}(X)(\xi_k^{(1)}),\xi_i^{(0)}\bigr\rangle
= \phi(\lambda_i^{(0)},
\lambda_k^{(1)}) x_{ik}^{\xi_0,\xi_1},
$$
which implies that $T_\phi^{A_0,A_1}\sim \{\phi(\lambda_i^{(0)},
\lambda_k^{(1)})\}_{i,k=1}^n\colon M_n \to M_n$.
Since these identifications
are isometric ones, we deduce that
\begin{equation}\label{normT1}
\Vert T_\phi^{A_0,A_1}\colon \mathcal S^\infty_n \to \mathcal S^\infty_n\Vert = \bigl\Vert
\{\phi(\lambda_i^{(0)},
\lambda_k^{(1)})\}_{i,k=1}^n\colon \mathcal S^\infty_n \to \mathcal S^\infty_n\bigr\Vert.
\end{equation}

The operator $T_\phi^{A_0,A_1}$ is called a linear
Schur multiplier associated with $\phi$ and $A_0,A_1.$

\subsection{Bilinear Schur multipliers}

Similarly, we introduce bilinear Schur multipliers
 associated to a triple of
self-adjoint operators.

Let $A_0, A_1, A_2\in B(\mathbb C^n)$ be diagonalizable
self-adjoint operators and for any $j=0,1,2$, let
$\xi_j=\{\xi_i^{(j)}\}_{i=1}^n$ be an orthornomal
basis of eigenvectors of $A_j$ and let $\{\lambda_i^{(j)}\}_{i=1}^n$
be the corresponding $n$-tuple of eigenvalues.

Let $\psi:\mathbb R^3\to \mathbb C$ be a  bounded Borel function.
Define a bilinear operator
$T_\psi^{A_0,A_1,A_2}:B(\mathbb C^n)\times B(\mathbb C^n)\to B(\mathbb C^n)$ by setting
\begin{equation}\label{bilinearMOI}
T_\psi^{A_0,A_1,A_2}(X,Y)=\sum_{i,j,k=1}^n \psi(\lambda_i^{(0)},
\lambda_k^{(1)},\lambda_j^{(2)})P_{\xi_i^{(0)}}XP_{\xi_k^{(1)}}
YP_{\xi_j^{(2)}}
\end{equation}
for any $X,Y\in B(\mathbb C^n).$  Assume that $\{\lambda_i^{(j)}\}_{i=1}^{n_j}$ is the
set of pairwise distinct eigenvalues of the operator $A_j$. Then alternatively, using
the spectral projections (\ref{spectral}), we can write
\begin{equation}\label{bilinearMOI_in spectral proj}
T_\psi^{A_0,A_1,A_2}(X,Y)=
\sum_{i=1}^{n_0} \sum_{k=1}^{n_1}\sum_{j=1}^{n_2}
\psi(\lambda_i^{(0)},
\lambda_k^{(1)},\lambda_j^{(2)})E^{(0)}_i X E^{(1)}_k
Y E^{(2)}_j
\end{equation}
for any $X,Y\in B(\mathbb C^n).$

Let us consider two different identifications of $B(\mathbb C^n)$ with $M_n$.
On the one hand, we identify $X$ with the matrix
$\{x_{ik}^{\xi_0,\xi_1}\}_{i,k=1}^n$, where
$x_{ik}^{\xi_0,\xi_1}=\langle X(\xi_k^{(1)}), \xi_i^{(0)}\rangle$.
On the other hand we identify $Y$ with
$\{y_{kj}^{\xi_1,\xi_2}\}_{k,j=1}^n,$ where
$y_{kj}^{\xi_1,\xi_2}=\langle Y(\xi_j^{(2)}), \xi_k^{(1)}\rangle$.
Under these identifications,
the operator $T_\psi^{A_0,A_1,A_2}$ acts as a bilinear Schur multiplier
associated with the matrix
$M=\{\psi(\lambda_i^{(0)}, \lambda_k^{(1)},\lambda_j^{(2)})\}_{i,j,k=1}^n$.
Indeed,
$$
\bigl\langle (P_{\xi_i^{(0)}}XP_{\xi_k^{(1)}} YP_{\xi_j^{(2)}})(\xi_s^{(2)}),\xi_r^{(0)}
\bigr\rangle =
\bigl\langle Y(\xi_{s}^{(2)}),\xi_k^{(1)}\bigr\rangle \bigl\langle X(\xi_k^{(1)}),\xi_r^{(0)}
\bigr\rangle=
y_{ks}^{\xi_1,\xi_2}x_{rk}^{\xi_0,\xi_1}
$$
if $s=j, r=i$, and
$$
\bigl\langle  (P_{\xi_i^{(0)}}XP_{\xi_k^{(1)}} YP_{\xi_j^{(2)}})(\xi_s^{(2)}),\xi_r^{(0)}
\bigr\rangle =0
$$
otherwise.

Therefore,
$$
\bigl\langle T_\psi^{A_0,A_1,A_2}(X,Y)(\xi_s^{(2)}),\xi_r^{(0)}\bigr\rangle=
\sum_{k=1}^n \psi(\lambda_r^{(0)}, \lambda_k^{(1)},
\lambda_s^{(2)})y_{ks}^{\xi_1,\xi_2}x_{rk}^{\xi_0,\xi_1}, $$
which implies
$$
T_\psi^{A_0,A_1,A_2}(X,Y)=\sum_{i,j,k=1}^n \psi(\lambda_i^{(0)},
\lambda_k^{(1)},\lambda_j^{(2)})x_{ik}^{\xi_0,\xi_1} y_{kj}^{\xi_1,\xi_2}E_{ij}^{\xi_0,\xi_2}.
$$

Since these identifications are isometric ones with respect to all Schatten norms, 
we deduce the formula
\begin{equation}\label{normT2}
\bigl\Vert T_\psi^{A_0,A_1,A_2}\colon \mathcal S^2_n\times
\mathcal S^2_n\to \mathcal S^1_n \bigr\Vert\,=\,
\bigl\Vert \{\psi(\lambda_i^{(0)}, \lambda_k^{(1)},\lambda_j^{(2)})\}_{i,j,k=1}^n
\colon \mathcal S^2_n\times
\mathcal S^2_n\to \mathcal S^1_n \bigr\Vert.
\end{equation}

The operator $T_\psi^{A_0,A_1,A_2}$ is called a bilinear Schur multiplier associated 
with $\psi$ and the operators $A_0,A_1,A_2$.

Operators $T_\psi^{A_0,A_1,A_2}$ present a special case of what is known in 
the literature as ``multiple operator integrals". We refer to
 \cite{Pav,St, Peller2006,ACDS,PSS-SSF} for additional information on this notion.

\subsection{A few properties of Schur multipliers}

In this subsection, $\phi\colon{\mathbb R}^2\to\mathbb C\,$ and
$\psi\colon{\mathbb R}^3\to\mathbb C\,$ denote arbitrary bounded
Borel functions, and $n\in\mathbb N$ is a fixed integer.
The following lemma gives some nice properties of bilinear Schur multipliers.

\begin{lemma}\label{MOIprop}Let $A_0,A_1,A_2\in B(\mathbb C^n)$ be self-adjoint operators.
Let $I_n$ be the identity operator in $B(\mathbb C^n).$ Then for $j=0,1$ we have
\begin{itemize}
\item[$(i)$]
$$
T^{A_0,A_1,A_2}_\psi(A_j,X)=T^{A_0,A_1,A_2}_{\psi_j}(I_n,X),
\quad  X\in B(\mathbb C^n),
$$
where $$\psi_j(x_0,x_1,x_2)=x_j\psi(x_0,x_1,x_2), \ \ x_0,x_1,x_2\in\mathbb R.$$
\item[$(ii)$]
$$
T^{A_j,A_2}_\phi(X)=T^{A_0,A_1,A_2}_{\tilde\psi_j}(I_n,X), \quad
X\in B(\mathbb C^n),
$$
where $$\tilde\psi_j(x_0,x_1,x_2)=\phi(x_j,x_2), \ \ x_0,x_1,x_2\in\mathbb R.$$
\end{itemize}
\end{lemma}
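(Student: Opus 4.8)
The plan is to verify both identities directly from the definition \eqref{bilinearMOI_in spectral proj} of $T_\psi^{A_0,A_1,A_2}$ and the definition \eqref{linearMOI_in spectral proj} of $T_\phi^{A_j,A_2}$, using the fact that $A_j$ acts as a scalar on each of its spectral subspaces. The computations are routine; the only mild subtlety is bookkeeping between the two different resolutions of the identity that appear (the one for $A_0$, say, and the one for $A_2$), and the fact that the spectral projections of a fixed self-adjoint operator are mutually orthogonal and sum to $I_n$.

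For part $(i)$, fix $j\in\{0,1\}$ and $X\in B(\mathbb C^n)$. Plugging $A_j$ into the first variable of \eqref{bilinearMOI_in spectral proj} and using $A_0 = \sum_{p=1}^{n_0}\lambda_p^{(0)}E_p^{(0)}$ (resp. $A_1 = \sum_{p=1}^{n_1}\lambda_p^{(1)}E_p^{(1)}$), I would exploit the orthogonality relations $E_i^{(0)}E_p^{(0)} = \delta_{ip}E_i^{(0)}$ (resp. $E_k^{(1)}E_p^{(1)} = \delta_{kp}E_k^{(1)}$) to collapse the extra sum. Concretely, for $j=0$ one gets $E_i^{(0)}A_0XE_k^{(1)}YE_j^{(2)} = \lambda_i^{(0)}E_i^{(0)}XE_k^{(1)}YE_j^{(2)}$, so the triple sum becomes $\sum_{i,k,j}\lambda_i^{(0)}\psi(\lambda_i^{(0)},\lambda_k^{(1)},\lambda_j^{(2)})E_i^{(0)}XE_k^{(1)}YE_j^{(2)}$, which is exactly $T_{\psi_0}^{A_0,A_1,A_2}(I_n,X)$ since $\psi_0(x_0,x_1,x_2) = x_0\psi(x_0,x_1,x_2)$ and since placing $I_n$ in the first slot leaves $E_i^{(0)}I_nXE_k^{(1)} = E_i^{(0)}XE_k^{(1)}$ unchanged. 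The case $j=1$ is identical, using instead $E_i^{(0)}XE_k^{(1)}A_1YE_j^{(2)} = \lambda_k^{(1)}E_i^{(0)}XE_k^{(1)}YE_j^{(2)}$; note it is important here that $A_1$ is sandwiched between the $A_1$-projections $E_k^{(1)}$, which is exactly where it acts diagonally.

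For part $(ii)$, I would start from $T_{\tilde\psi_j}^{A_0,A_1,A_2}(I_n,X) = \sum_{i,k,\ell}\phi(\lambda_{\cdot}^{(j)},\lambda_\ell^{(2)})E_i^{(0)}E_k^{(1)}XE_\ell^{(2)}$, where the argument of $\phi$ in the first slot is $\lambda_i^{(0)}$ if $j=0$ and $\lambda_k^{(1)}$ if $j=1$ (the middle slot being irrelevant since $\tilde\psi_j$ does not depend on $x_1$). Consider $j=1$ first: the summand is $\phi(\lambda_k^{(1)},\lambda_\ell^{(2)})E_i^{(0)}E_k^{(1)}XE_\ell^{(2)}$, and summing over $i$ gives $\sum_{i=1}^{n_0}E_i^{(0)} = I_n$, so the sum collapses to $\sum_{k,\ell}\phi(\lambda_k^{(1)},\lambda_\ell^{(2)})E_k^{(1)}XE_\ell^{(2)} = T_\phi^{A_1,A_2}(X)$ by \eqref{linearMOI_in spectral proj}. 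For $j=0$, the summand $\phi(\lambda_i^{(0)},\lambda_\ell^{(2)})E_i^{(0)}E_k^{(1)}XE_\ell^{(2)}$ does not depend on $\lambda_k^{(1)}$, so summing over $k$ gives $\sum_{k=1}^{n_1}E_k^{(1)} = I_n$ and the sum collapses to $\sum_{i,\ell}\phi(\lambda_i^{(0)},\lambda_\ell^{(2)})E_i^{(0)}XE_\ell^{(2)} = T_\phi^{A_0,A_2}(X)$. The only point to watch is that the collapsing is performed over the variable that does \emph{not} appear in $\tilde\psi_j$ — over $i$ when $j=1$, over $k$ when $j=0$ — and that $\sum_p E_p^{(\cdot)} = I_n$ for each of the operators $A_0, A_1$, which is precisely the statement that each $\xi_j$ is an orthonormal basis. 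I do not anticipate a genuine obstacle here; the main care is simply matching up indices and not confusing the three spectral decompositions.
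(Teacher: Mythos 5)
Your overall strategy is exactly the paper's: expand $T_\psi^{A_0,A_1,A_2}$ over the spectral projections, use that $A_j$ acts as the scalar $\lambda^{(j)}$ next to its own spectral projection to absorb it into the symbol (part $(i)$), and use $\sum_k E_k^{(1)}=I_n$ (resp. $\sum_i E_i^{(0)}=I_n$) to collapse the redundant sum (part $(ii)$). Part $(ii)$ is correct as written. In part $(i)$, however, your displayed summands are garbled: the bilinear form takes only two arguments, yet you write $E_i^{(0)}A_0XE_k^{(1)}YE_j^{(2)}$ and $E_i^{(0)}XE_k^{(1)}A_1YE_j^{(2)}$, which involve three operators and correspond to the (true but different) identities $T_\psi(A_0X,Y)=T_{\psi_0}(X,Y)$ and $T_\psi(X,A_1Y)=T_{\psi_1}(X,Y)$, not to the stated $T_\psi(A_j,X)=T_{\psi_j}(I_n,X)$; likewise $\sum\lambda_i^{(0)}\psi(\cdots)E_i^{(0)}XE_k^{(1)}YE_j^{(2)}$ is $T_{\psi_0}(X,Y)$, not $T_{\psi_0}(I_n,X)$. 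The correct summand is $E_i^{(0)}A_jE_k^{(1)}XE_\ell^{(2)}$, and the identities $E_i^{(0)}A_0=\lambda_i^{(0)}E_i^{(0)}$ and $A_1E_k^{(1)}=\lambda_k^{(1)}E_k^{(1)}$ immediately give $E_i^{(0)}A_jE_k^{(1)}XE_\ell^{(2)}=\lambda^{(j)}_{\cdot}\,E_i^{(0)}I_nE_k^{(1)}XE_\ell^{(2)}$, which is the summand of $T_{\psi_j}(I_n,X)$. So the idea is right and matches the paper; only the bookkeeping of the two slots needs to be repaired.
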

\begin{proof}
Let us prove the assertion for $j=0$ only. The proof for $j=1$ is similar.

$(i).$
For $X\in B(\mathbb C^n)$ we have
\begin{align*}
T^{A_0,A_1,A_2}_\psi(A_0,X)&=
\sum_{i,j,k=1}^n \psi(\lambda_i^{(0)}, \lambda_k^{(1)},\lambda_j^{(2)})
P_{\xi_i^{(0)}}A_0P_{\xi_k^{(1)}} XP_{\xi_j^{(2)}}\\ &=\sum_{i,j,k=1}^n
\psi(\lambda_i^{(0)}, \lambda_k^{(1)},\lambda_j^{(2)})P_{\xi_i^{(0)}}
\Big(\sum_{r=1}^n \lambda_r^{(0)}P_{\xi_r^{(0)}}\Big)P_{\xi_k^{(1)}}
XP_{\xi_j^{(2)}}\\&=\sum_{i,j,k=1}^n \lambda_i^{(0)}\psi(\lambda_i^{(0)},
\lambda_k^{(1)},\lambda_j^{(2)})P_{\xi_i^{(0)}} I_n P_{\xi_k^{(1)}}
XP_{\xi_j^{(2)}}\\& =T^{A_0,A_1,A_2}_{\psi_0}(I_n,X).
\end{align*}
$(ii).$
For $X\in B(\mathbb C^n)$ we have
\begin{align*}
T^{A_0,A_1,A_2}_{\tilde\psi_0}(I_n,X)&=
\sum_{i,j,k=1}^n \tilde\psi_0(\lambda_i^{(0)},\lambda_k^{(1)},
\lambda_j^{(2)})P_{\xi_i^{(0)}} I_n P_{\xi_k^{(1)}} XP_{\xi_j^{(2)}}\\
&=\sum_{i,j=1}^n \phi(\lambda_i^{(0)}, \lambda_j^{(2)})P_{\xi_i^{(0)}}
\Big(\sum_{k=1}^n P_{\xi_k^{(1)}}\Big)XP_{\xi_j^{(2)}}\\&=
\sum_{i,j=1}^n \phi(\lambda_i^{(0)},\lambda_j^{(2)})P_{\xi_i^{(0)}}
XP_{\xi_j^{(2)}}\\& =T^{A_0,A_2}_{\phi}(X).
\end{align*}
\end{proof}

\begin{lemma}\label{tech_lem1}
Let $A\in B(\mathbb C^{n})$ be a self-adjoint operator and $X,Y\in B(\mathbb C^{n}).$
Let
$$
\tilde A=\left(\begin{array}{cc}
A & 0 \\ 0 & A
\end{array}\right)\quad\hbox{and}\quad \tilde X=\left(\begin{array}{cc}
0 & X \\ Y & 0
\end{array}\right).
$$
Then
$$
T^{\tilde A,\tilde A,\tilde A}_{\psi}(\tilde X,\tilde X)=\left(\begin{array}{cc}
T^{A,A,A}_{\psi}(X,Y) & 0 \\ 0 & T^{A,A,A}_{\psi}(Y,X)
\end{array}\right).
$$
\end{lemma}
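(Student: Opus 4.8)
The plan is to unravel both sides of the claimed identity directly from the definition \eqref{bilinearMOI} of the bilinear Schur multiplier $T_\psi^{A_0,A_1,A_2}$, applied with $A_0=A_1=A_2=\tilde A$, and to exploit the block structure to see that the off-diagonal operator $\tilde X$ forces the spectral projections to alternate between the two summands of $\mathbb C^n\oplus\mathbb C^n$. First I would fix a spectral decomposition of $A$: let $\{\xi_i\}_{i=1}^n$ be an orthonormal eigenbasis with $A\xi_i=\lambda_i\xi_i$. Then $\tilde A$ is diagonalized by the orthonormal basis consisting of the vectors $\xi_i\oplus 0$ and $0\oplus\xi_i$, with eigenvalues $\lambda_i$ (each appearing with doubled multiplicity). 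The key observation is that the rank-one projections onto $\xi_i\oplus 0$ and onto $0\oplus\xi_i$ are the $2\times 2$ block-diagonal operators $\mathrm{diag}(P_{\xi_i},0)$ and $\mathrm{diag}(0,P_{\xi_i})$ respectively, so every spectral projection of $\tilde A$ splits as a block-diagonal operator whose two diagonal blocks are spectral projections of $A$.

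Next I would write out $T^{\tilde A,\tilde A,\tilde A}_\psi(\tilde X,\tilde X)$ using the spectral-projection form \eqref{bilinearMOI_in spectral proj}: it is a sum over distinct eigenvalues $\lambda_a,\lambda_b,\lambda_c$ of $A$ of terms $\psi(\lambda_a,\lambda_b,\lambda_c)\,\tilde E_a\,\tilde X\,\tilde E_b\,\tilde X\,\tilde E_c$, where $\tilde E_a$ denotes the spectral projection of $\tilde A$ for $\lambda_a$. Writing $\tilde E_a=\mathrm{diag}(E_a,E_a)$ with $E_a$ the corresponding spectral projection of $A$, and $\tilde X=\begin{pmatrix}0&X\\Y&0\end{pmatrix}$, a direct $2\times 2$ block multiplication gives
\begin{align*}
\tilde E_a\tilde X\tilde E_b\tilde X\tilde E_c
&=\begin{pmatrix}E_a&0\\0&E_a\end{pmatrix}\begin{pmatrix}0&X\\Y&0\end{pmatrix}\begin{pmatrix}E_b&0\\0&E_b\end{pmatrix}\begin{pmatrix}0&X\\Y&0\end{pmatrix}\begin{pmatrix}E_c&0\\0&E_c\end{pmatrix}\\
&=\begin{pmatrix}E_aXE_bYE_c&0\\0&E_aYE_bXE_c\end{pmatrix}.
\end{align*}
Summing against $\psi(\lambda_a,\lambda_b,\lambda_c)$ over all distinct eigenvalue triples, the $(1,1)$ block reproduces exactly the spectral-projection expansion of $T^{A,A,A}_\psi(X,Y)$ and the $(2,2)$ block reproduces that of $T^{A,A,A}_\psi(Y,X)$, while the off-diagonal blocks vanish. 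This is precisely the asserted identity.

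There is essentially no serious obstacle here; the only point requiring a little care is the bookkeeping of eigenvalue multiplicities when passing from $A$ to $\tilde A$. One must check that the set of \emph{distinct} eigenvalues of $\tilde A$ coincides with that of $A$, and that the spectral projection $\tilde E_a$ of $\tilde A$ for a distinct eigenvalue $\lambda_a$ is genuinely $\mathrm{diag}(E_a,E_a)$ — this follows immediately from \eqref{spectral} since the eigenvectors of $\tilde A$ for $\lambda_a$ are exactly $\{\xi_k\oplus 0,\,0\oplus\xi_k:\lambda_k=\lambda_a\}$. Alternatively, and perhaps more cleanly, one can avoid grouping eigenvalues altogether and work with the unreduced form \eqref{bilinearMOI}: the eigenbasis of $\tilde A$ indexed by pairs $(i,\epsilon)$ with $\epsilon\in\{1,2\}$ has $P$ of the form $\mathrm{diag}(P_{\xi_i},0)$ or $\mathrm{diag}(0,P_{\xi_i})$, and the triple sum over $(i,\epsilon_0),(k,\epsilon_1),(j,\epsilon_2)$ collapses because $P_{\xi_i\oplus 0}\,\tilde X\,P_{\xi_k\oplus 0}=0$ and so on, leaving only the two ``alternating'' patterns $(1,2,1)$ and $(2,1,2)$, which again yield the two diagonal blocks. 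Either route makes the block computation transparent and completes the proof.
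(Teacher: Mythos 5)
Your proof is correct and follows essentially the same route as the paper: identify the spectral projections of $\tilde A$ as $\mathrm{diag}(E_a,E_a)$ and carry out the $2\times 2$ block multiplication in the spectral-projection form of the multiplier. The block computation and the multiplicity bookkeeping both match the paper's argument.
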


\begin{proof}
Let $\{\lambda_i\}_{i=1}^m$ be the set of
distinct eigenvalues of the operator $A,$ $m\le n,$
and let $E_i^A$ be the spectral projection of $A$
associated with $\lambda_i,$ $1\le i\le m.$
Clearly, the operator $\tilde A$ has the same set
$\{\lambda_i\}_{i=1}^m$ of distinct eigenvalues and the
spectral projection of the operator $\tilde A$ associated with $\lambda_i$ is given by
$$
E_i^{\tilde A}=\left(\begin{array}{cc}
E_i^A & 0 \\ 0 & E_i^A
\end{array}\right), \ \ 1\le i\le m.
$$
Therefore, we have
\begin{align*}
T^{\tilde A,\tilde A,\tilde A}_{\psi}(\tilde X,\tilde X)
& =\sum_{i,k,j=1}^m\psi(\lambda_i,\lambda_k,\lambda_j)
\left(\begin{array}{cc}
E_i^A & 0 \\ 0 & E_i^A
\end{array}\right)\left(\begin{array}{cc}
0 & X \\ Y & 0
\end{array}\right)\times\\
&
\qquad\qquad \qquad\qquad\qquad\qquad\left(\begin{array}{cc}
E_k^A & 0 \\ 0 & E_k^A
\end{array}\right)\left(\begin{array}{cc}
0 & X \\ Y & 0
\end{array}\right)\left(\begin{array}{cc}
E_j^A & 0 \\ 0 & E_j^A
\end{array}\right)\\
&
=\sum_{i,k,j=1}^m\psi(\lambda_i,\lambda_k,\lambda_j)
\left(\begin{array}{cc}
E_i^A X E_k^A Y E_j^A & 0 \\ 0 & E_i^A Y E_k^A X E_j^A
\end{array}\right)\\
&
=\left(\begin{array}{cc}
T^{A,A,A}_{\psi}(X,Y) & 0 \\ 0 & T^{A,A,A}_{\psi}(Y,X)
\end{array}\right).
\end{align*}
\end{proof}

\begin{lemma}\label{tech_lem2}
Let $A, B\in B(\mathbb C^{n})$ be self-adjoint
operators with the same set of eigenvalues and
$X,Y\in B(\mathbb C^{n}).$
Let
$$
\tilde A=\left(\begin{array}{cc}
A & 0 \\ 0 & B
\end{array}\right), \quad \tilde X=\left(\begin{array}{cc}
0 & X \\ 0 & 0
\end{array}\right)\quad\hbox{and}\quad
\tilde Y=\left(\begin{array}{cc}
0 & 0 \\ 0 & Y
\end{array}\right).
$$
Then
$$T^{\tilde A,\tilde A,\tilde A}_{\psi}(\tilde X,\tilde Y)=\left(\begin{array}{cc}
0 & T^{A,B,B}_{\psi}(X,Y) \\ 0 &0
\end{array}\right).$$
\end{lemma}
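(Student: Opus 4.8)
The plan is to follow the proof of Lemma \ref{tech_lem1} almost verbatim, the one new ingredient being the hypothesis that $A$ and $B$ share the same set of eigenvalues. First I would let $\{\lambda_i\}_{i=1}^m$ be this common set of pairwise distinct eigenvalues, and denote by $E_i^A$ and $E_i^B$ the spectral projections of $A$ and $B$ associated with $\lambda_i$. Since $A$ and $B$ have exactly the same eigenvalues, each $E_i^A$ and each $E_i^B$ is nonzero, the operator $\tilde A$ has $\{\lambda_i\}_{i=1}^m$ as its set of distinct eigenvalues, and the spectral projection of $\tilde A$ associated with $\lambda_i$ is
$$
E_i^{\tilde A}=\left(\begin{array}{cc} E_i^A & 0 \\ 0 & E_i^B\end{array}\right),\qquad 1\le i\le m.
$$

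Next I would substitute $\tilde X$, $\tilde Y$ and these projections into the spectral representation (\ref{bilinearMOI_in spectral proj}) of $T^{\tilde A,\tilde A,\tilde A}_\psi$, and carry out the $2\times 2$ block multiplication $E_i^{\tilde A}\,\tilde X\, E_k^{\tilde A}\,\tilde Y\, E_j^{\tilde A}$. Because the only nonzero block of $\tilde X$ sits in the upper-right corner and the only nonzero block of $\tilde Y$ in the lower-right corner, a direct computation gives
$$
E_i^{\tilde A}\,\tilde X\, E_k^{\tilde A}\,\tilde Y\, E_j^{\tilde A}=\left(\begin{array}{cc} 0 & E_i^A X E_k^B Y E_j^B \\ 0 & 0\end{array}\right).
$$
Multiplying by $\psi(\lambda_i,\lambda_k,\lambda_j)$, summing over $1\le i,k,j\le m$, and recognizing the upper-right block as precisely the spectral representation (\ref{bilinearMOI_in spectral proj}) of $T^{A,B,B}_\psi(X,Y)$ — here one uses once more that $\{\lambda_i\}_{i=1}^m$ simultaneously lists the distinct eigenvalues of $A$ (for the first variable) and of $B$ (for the second and third variables) — yields the claimed identity.

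I do not expect any real obstacle: the argument is entirely parallel to Lemma \ref{tech_lem1} and the core of it is routine block arithmetic. The only point that genuinely uses a hypothesis is the identification of $E_i^{\tilde A}$ with a spectral projection of $\tilde A$, which requires $\lambda_i$ to be an eigenvalue of both $A$ and $B$, i.e. exactly the assumption that $A$ and $B$ have the same set of eigenvalues; without it the common index set $\{1,\dots,m\}$ used in (\ref{bilinearMOI_in spectral proj}) for $\tilde A$, for $A$ (first slot) and for $B$ (second and third slots) would not match up, and the block decomposition of the multiple operator integral would fail to close. Everything else is bookkeeping.
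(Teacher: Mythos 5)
Your proposal is correct and follows the paper's proof essentially verbatim: the same identification of the spectral projections $E_i^{\tilde A}$ as $2\times 2$ block diagonal matrices built from $E_i^A$ and $E_i^B$, the same substitution into the spectral representation (\ref{bilinearMOI_in spectral proj}), and the same block multiplication. You also correctly pinpoint where the hypothesis that $A$ and $B$ share the same eigenvalues is used.
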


\begin{proof}
Let $\{\lambda_i\}_{i=1}^m$ be the set of
distinct eigenvalues of the operator $A,$ $m\le n,$
and let $E_i^A$ (resp. $E_i^B$) be the spectral projection of $A$
(resp. $B$) associated with $\lambda_i,$ $1\le i\le m.$
Since $A$ and $B$ have the same set of eigenvalues, the operator
$\tilde A$ has the same set
$\{\lambda_i\}_{i=1}^m$ of distinct eigenvalues and the spectral
projection of the operator $\tilde A$ associated with $\lambda_i$ is given by
$$
E_i^{\tilde A}=\left(\begin{array}{cc}
E_i^A & 0 \\ 0 & E_i^B
\end{array}\right), \ \ 1\le i\le m.
$$
Therefore, we have
\begin{align*}
T^{\tilde A,\tilde A,\tilde A}_{\psi}(\tilde X,\tilde Y)&  =
\sum_{i,k,j=1}^m\psi(\lambda_i,\lambda_k,\lambda_j)
\left(\begin{array}{cc}
E_i^A & 0 \\ 0 & E_i^B
\end{array}\right)\left(\begin{array}{cc}
0 & X \\ 0 & 0
\end{array}\right)\times\\
&\qquad\qquad \qquad\qquad\qquad\qquad
\left(\begin{array}{cc}
E_k^A & 0 \\ 0 & E_k^B
\end{array}\right)
\left(\begin{array}{cc}
0 & 0 \\ 0 & Y
\end{array}\right)\left(\begin{array}{cc}
E_j^A & 0 \\ 0 & E_j^B
\end{array}\right)\\
&
=\sum_{i,k,j=1}^m\psi(\lambda_i,\lambda_k,\lambda_j)
\left(\begin{array}{cc}
0 & E_i^A X E_k^B Y E_j^B \\ 0 & 0
\end{array}\right)\\
&
=\left(\begin{array}{cc}
0 & T^{A,B,B}_{\psi}(X,Y) \\ 0 & 0
\end{array}\right).
\end{align*}
\end{proof}

\begin{lemma}\label{lem_homogen_symbol}
Let $A_0,A_1,A_2\in B(\mathbb C^{n})$ be self-adjoint operators.
For any $a\neq 0\in\mathbb R$ we have that
$$T^{aA_0,aA_1,aA_2}_{\psi}= T^{A_0,A_1,A_2}_{\psi_a},$$
where
$$\psi_a(x_0,x_1,x_2)=\psi(ax_0,ax_1,ax_2), \ \ x_0,x_1,x_2\in\mathbb R.$$
\end{lemma}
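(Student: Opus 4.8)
The plan is to unwind the definition \eqref{bilinearMOI} directly, using the elementary observation that multiplying a self-adjoint operator by a nonzero real scalar rescales its eigenvalues but leaves its eigenvectors (hence its spectral projections) unchanged.

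Concretely, for each $j=0,1,2$ let $\xi_j=\{\xi_i^{(j)}\}_{i=1}^n$ be an orthonormal basis of eigenvectors of $A_j$ with $A_j(\xi_i^{(j)})=\lambda_i^{(j)}\xi_i^{(j)}$. Since $a\neq 0$, the operator $aA_j$ is again self-adjoint and satisfies $(aA_j)(\xi_i^{(j)})=(a\lambda_i^{(j)})\xi_i^{(j)}$, so $\xi_j$ is an orthonormal basis of eigenvectors of $aA_j$ with eigenvalue $n$-tuple $\{a\lambda_i^{(j)}\}_{i=1}^n$. In particular the rank-one projections $P_{\xi_i^{(j)}}$ appearing in \eqref{bilinearMOI} are the same whether one builds $T_\psi$ from $A_j$ or from $aA_j$.

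Applying \eqref{bilinearMOI} to the triple $aA_0,aA_1,aA_2$ therefore gives, for any $X,Y\in B(\mathbb C^n)$,
$$
T^{aA_0,aA_1,aA_2}_{\psi}(X,Y)=\sum_{i,j,k=1}^n \psi\bigl(a\lambda_i^{(0)},a\lambda_k^{(1)},a\lambda_j^{(2)}\bigr)\,P_{\xi_i^{(0)}}XP_{\xi_k^{(1)}}YP_{\xi_j^{(2)}}.
$$
By the definition of $\psi_a$, we have $\psi\bigl(a\lambda_i^{(0)},a\lambda_k^{(1)},a\lambda_j^{(2)}\bigr)=\psi_a\bigl(\lambda_i^{(0)},\lambda_k^{(1)},\lambda_j^{(2)}\bigr)$ for all $i,j,k$, so the right-hand side is exactly
$$
\sum_{i,j,k=1}^n \psi_a\bigl(\lambda_i^{(0)},\lambda_k^{(1)},\lambda_j^{(2)}\bigr)\,P_{\xi_i^{(0)}}XP_{\xi_k^{(1)}}YP_{\xi_j^{(2)}}=T^{A_0,A_1,A_2}_{\psi_a}(X,Y),
$$
again by \eqref{bilinearMOI}. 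Since $X,Y$ were arbitrary, $T^{aA_0,aA_1,aA_2}_{\psi}=T^{A_0,A_1,A_2}_{\psi_a}$.

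There is no real obstacle here: the statement is a formal homogeneity property of the defining sum, and the only point that needs to be noted is the (trivial) stability of eigenvectors under scaling by a nonzero real number, which is what makes the substitution $\lambda_i^{(j)}\mapsto a\lambda_i^{(j)}$ legitimate without changing the projections. One could equally well run the argument through the spectral-projection form \eqref{bilinearMOI_in spectral proj}, observing that $aA_j$ has distinct eigenvalues $\{a\lambda_i^{(j)}\}_{i=1}^{n_j}$ with the same spectral projections $E_i^{(j)}$.
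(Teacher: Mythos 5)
Your proof is correct and is essentially the same argument as in the paper: the paper works with the spectral-projection form \eqref{bilinearMOI_in spectral proj}, observing that $aA_j$ has distinct eigenvalues $\{a\lambda_i^{(j)}\}_{i=1}^{n_j}$ with the same spectral projections $E_i^{A_j}$, while you use the rank-one form \eqref{bilinearMOI} and explicitly note the spectral-projection variant as an alternative. Both boil down to the same observation that scaling by a nonzero real leaves eigenvectors (hence projections) unchanged and only rescales eigenvalues.
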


\begin{proof}
Let $\{\lambda_i^{(j)}\}_{i=1}^{n_j}$ be the set of distinct eigenvalues
of $A_j$, $j=0,1,2$. Fix $a\neq 0\in\mathbb R$. It is clear that for any $j$,
$\{a\lambda_i^{(j)}\}_{i=1}^{n_j}$ is the set of distinct eigenvalues
of $aA_j$, and that the corresponding spectral projections coincide,
that is, $E_{i}^{aA_j} = E_{i}^{A_j}$ for any $i=1,\ldots,n_j$.
Therefore, for $X,Y\in B(\mathbb C^n)$, we have
\begin{align*}
T_\psi^{a A_0,a A_1,a A_2}(X,Y)
&
=\sum_{i=1}^{n_0} \sum_{k=1}^{n_1}\sum_{j=1}^{n_2}
\psi\bigl(a\lambda_i^{(0)}, a\lambda_k^{(1)},a \lambda_j^{(2)}\bigr)E_{i}^{A_0}
X E_{k}^{A_1} Y E_{j}^{A_2}\\
&
=T^{A_0,A_1,A_2}_{\psi_a}(X,Y).
\end{align*}
\end{proof}

\begin{lemma}\label{lem_contin_symbol}
Let $A, B\in B(\mathbb C^{n})$  be self-adjoint operators and let $\{U_m\}_{m\ge 1}$
be a sequence of unitary operators from $B(\mathbb C^{n})$ such that
$U_m\to I_n$ as $m\to \infty$. Let also
$X,Y\in B(\mathbb C^{n})$ and sequences $\{X_m\}_{m\ge 1}$
and $\{Y_m\}_{m\ge 1}$ in $B(\mathbb C^{n})$  such that
$X_m\to X$ and $Y_m\to Y$ as $m\to \infty$.
Let $\psi,\psi_m: \mathbb R^3 \to \mathbb C$
be bounded Borel functions such that $\psi_m\to\psi$ pointwise
as $m\to\infty$.
Then
\begin{equation}
\label{conv}T^{U_m A U_m^*,B,B}_{\psi_m}(X_m,Y_m)\longrightarrow
T^{A,B,B}_{\psi}(X,Y), \ \ m\to\infty.
\end{equation}
\end{lemma}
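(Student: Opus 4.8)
The plan is to reduce the convergence \eqref{conv} to the elementary fact that the spectral projections of $U_m A U_m^*$ converge to those of $A$, and then pass to the limit term by term in the finite sum defining the multiple operator integral. First I would diagonalize: write $A = \sum_{i=1}^{p} \lambda_i E_i^A$ and $B = \sum_{j=1}^{q} \mu_j E_j^B$ with distinct eigenvalues. Since $U_m A U_m^*$ is obtained from $A$ by a unitary conjugation, its distinct eigenvalues are exactly the $\lambda_i$, with spectral projections $E_i^{U_m A U_m^*} = U_m E_i^A U_m^*$. Because $U_m \to I_n$ (and hence $U_m^* \to I_n$) in $B(\mathbb C^n)$, we get $U_m E_i^A U_m^* \to E_i^A$ as $m \to \infty$ for each $i$. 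Here it is important that the eigenvalues of the perturbed operator do \emph{not} move — only a unitary conjugation is applied, not an additive perturbation — so the combinatorics of which $\psi$-values appear stays fixed along the sequence.

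Next I would write out, using the spectral representation \eqref{bilinearMOI_in spectral proj},
\begin{equation*}
T^{U_m A U_m^*,B,B}_{\psi_m}(X_m,Y_m) = \sum_{i=1}^{p}\sum_{k=1}^{p}\sum_{j=1}^{q} \psi_m(\lambda_i,\lambda_k,\mu_j)\, \bigl(U_m E_i^A U_m^*\bigr) X_m \bigl(U_m E_k^A U_m^*\bigr) Y_m \, E_j^B .
\end{equation*}
This is a finite sum (the number of terms, $p^2 q$, is independent of $m$), so it suffices to check that each summand converges to the corresponding summand of $T^{A,B,B}_\psi(X,Y)$. For a fixed triple $(i,k,j)$ the scalar factor converges since $\psi_m \to \psi$ pointwise, in particular at the point $(\lambda_i,\lambda_k,\mu_j)$; and the operator factor converges because $U_m E_i^A U_m^* \to E_i^A$, $U_m E_k^A U_m^* \to E_k^A$, $X_m \to X$, $Y_m \to Y$, all in the finite-dimensional algebra $B(\mathbb C^n)$ where multiplication is jointly continuous. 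Multiplying finitely many convergent sequences of operators/scalars gives a convergent product with the expected limit, and summing finitely many convergent sequences gives \eqref{conv}.

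The only genuinely delicate point — and the one I would state carefully — is the claim $E_i^{U_m A U_m^*} = U_m E_i^A U_m^*$ together with the fact that the \emph{index set} of distinct eigenvalues is the same for all $m$; this is what lets us match summands across the sequence. Everything else is the joint continuity of finitely many operations in a finite-dimensional space, so no estimates or norms beyond submultiplicativity of $\|\cdot\|_\infty$ are needed. (One could also phrase the conclusion in any Schatten norm since all norms on $B(\mathbb C^n)$ are equivalent, but the statement as given does not specify a norm, so I would simply work with, say, $\|\cdot\|_\infty$ and remark that the choice is immaterial.) I do not anticipate any real obstacle: the lemma is a routine but necessary continuity statement, presumably used later to transfer a counterexample built for operators with a convenient spectrum to nearby operators, and the proof is a direct limit computation in the finite sum.
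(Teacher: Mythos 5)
Your approach is sound, but there is an error in your displayed formula for $T^{U_m A U_m^*,B,B}_{\psi_m}(X_m,Y_m)$: the middle spectral projection is governed by $A_1=B$, not by $A_0=U_m A U_m^*$, so the sum should read
\[
\sum_{i=1}^{p}\sum_{k=1}^{q}\sum_{j=1}^{q}\psi_m(\lambda_i,\mu_k,\mu_j)\,\bigl(U_mE_i^A U_m^*\bigr)\,X_m\,E_k^B\,Y_m\,E_j^B,
\]
with $E_k^B$ (not $U_m E_k^A U_m^*$) in the middle slot and $\mu_k$ (not $\lambda_k$) as the second argument of $\psi_m$. Your narrative repeats the same confusion (``$U_m E_k^A U_m^*\to E_k^A$''). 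This does not break the argument — once corrected, there is even one fewer $m$-dependent factor per summand, and the ``finite sum plus joint continuity of multiplication in $B(\mathbb C^n)$'' reasoning goes through unchanged — but the formula as written would not compile to the right object, so it needs to be fixed.

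Modulo that correction, your route differs only cosmetically from the paper's. The paper first factors out the unitary, using $E_i^{U_mAU_m^*}=U_mE_i^AU_m^*$ to rewrite $T^{U_mAU_m^*,B,B}_{\psi_m}(X_m,Y_m)=U_m\,T^{A,B,B}_{\psi_m}(U_m^*X_m,Y_m)$, and then proves $T^{A,B,B}_{\psi_m}(U_m^*X_m,Y_m)\to T^{A,B,B}_\psi(X,Y)$ by a crude triangle-inequality estimate (bounding each multiple operator integral by the finite sum of $|\psi|$-values times $\|\cdot\|_\infty$ norms), finally absorbing $U_m\to I_n$. You instead expand the whole expression into its finite spectral sum and pass to the limit term by term. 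Both proofs rest on the same two facts — the spectral projections of $U_mAU_m^*$ are $U_mE_i^AU_m^*$ with an $m$-independent index set, and everything is finite-dimensional — so the difference is one of bookkeeping rather than substance; your version avoids the factorization trick at the cost of slightly more notation in the sum, which is a fair trade. Your remark that no additive perturbation of the spectrum occurs, so the eigenvalue set and hence the summation indices are fixed along the sequence, is exactly the right point to flag.
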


\begin{proof}
Let $\{\lambda_i\}_{i=1}^{m_0}$ and $\{\mu_k\}_{k=1}^{m_1}$ be the set of
distinct eigenvalues of the operators $A$ and $B$, respectively, $m_0,m_1\le n,$
and let $E_i^A$ (resp. $E_k^B$) be the spectral projection of $A$ (resp. $B$)
associated with $\lambda_i$ (resp. $\mu_k$), $1\le i\le m_0$
(resp. $1\le k\le m_1$).
It is clear that the sequence $\{\lambda_i\}_{i=1}^{m_0}$ is the sequence
of eigenvalues of $U_m A U_m^*$
and that the spectral projection of $U_m A U_m^*$ associated with $\lambda_i$ is
given by 
$$
E_i^{U_m A U_m^*}=U_m E_i^{A} U_m^*,\quad 1\le i\le m_0.
$$
Observe that
\begin{align*}
T^{U_m A U_m^*,B,B}_{\psi_m}(X_m,Y_m)
&
=\sum_{i=1}^{m_0}\sum_{j,k=1}^{m_1} \psi_m(\lambda_i, \mu_k, \mu_j)
E_i^{U_m A U_m^*}X E_k^{B} YE_j^{B}\\
&
= U_m\Big(\sum_{i=1}^{m_0}\sum_{j,k=1}^{m_1} \psi_m(\lambda_i,
\mu_k, \mu_j)E_i^{ A} (U_m^* X) E_k^{B} YE_j^{B}\Big)\\
&
=U_m T^{A,B,B}_{\psi_m}(U_m^* X,Y).
\end{align*}

We claim that
$T^{A,B,B}_{\psi_m}(U_m^* X,Y)\to T^{A,B,B}_{\psi}(X,Y)$.
Indeed, we have
\begin{align*}
\| & T^{A,B,B}_{\psi_m}(U_m^* X,Y)
- T^{A,B,B}_{\psi}(X,Y)\|_\infty \\
&
\le \| T^{A,B,B}_{\psi_m}(U_m^* X,Y)-T^{A,B,B}_{\psi_m}(X,Y)\|_\infty
+\|T^{A,B,B}_{\psi_m}(X,Y)- T^{A,B,B}_{\psi}(X,Y)\|_\infty\\
&
\leq \| T^{A,B,B}_{\psi_m}(U_m^* X-X,Y)\|_\infty
+\|T^{A,B,B}_{\psi_m-\psi}(X,Y)\|_\infty\\
&
\leq
\sum_{i=1}^{m_0}\sum_{j,k=1}^{m_1} |\psi_m(\lambda_i, \mu_k, \mu_j)| \|U_m X-X\|_\infty\|Y\|_\infty
\,+\\
& \qquad\qquad\qquad\qquad
\sum_{i=1}^{m_0}\sum_{j,k=1}^{m_1} |\psi_m-\psi|(\lambda_i, \mu_k, \mu_j)\|X\|_\infty\|Y\|_\infty.
\end{align*}
This upper bound tends to $0$ as $m\to\infty$, which proves the claim.

Now since $U_m\to I_n$, we have
$$
U_m T^{A,B,B}_{\psi_m}(U_m^* X,Y)-T^{A,B,B}_{\psi_m} (U_m^* X,Y)\,\longrightarrow 0
$$
as $m\to\infty$. The result follows at once.
\end{proof}

\begin{lemma}\label{lem_commuts}
Let $A\in B(\mathbb C^n)$ be a self-adjoint operator and
let $X\in B(\mathbb C^n)$ commute with $A.$
Let $\widehat{\psi}\colon\mathbb R\to \mathbb R$ be defined by
$\widehat{\psi}(x)=\psi(x,x,x)$, $x\in\mathbb R$.
\begin{itemize}
\item[$(i)$] We have
$$
 T^{A,A,A}_{\psi}(X,X)=\widehat{\psi}(A)\times X^2.
$$
\item[$(ii)$] We have
$$
T^{A,A,A}_\psi(Y,X)=T^{A,A}_{\phi_1}(Y)\times X, \ \  Y\in B(\mathbb C^n),
$$
where
$$
\phi_1(x_0,x_1)=\psi(x_0,x_1,x_1), \ \ x_0,x_1\in\mathbb R.
$$
\item[$(iii)$] We have
$$
T^{A,A,A}_\psi(X,Y)=X\times T^{A,A}_{\phi_2}(Y), \ \  Y\in B(\mathbb C^n),
$$
where
$$
\phi_2(x_0,x_1)=\psi(x_0,x_0,x_1), \ \ x_0,x_1\in\mathbb R.
$$
\end{itemize}
\end{lemma}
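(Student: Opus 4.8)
The plan is to exploit the single structural fact at play, namely that $X$ commutes with $A$, hence with every spectral projection $E_i^A$ of $A$; this makes $E_i^A X E_j^A$ vanish unless $i=j$ and collapses one summation index in the spectral representation (\ref{bilinearMOI_in spectral proj}).

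First I would establish the two ``building blocks'' $(ii)$ and $(iii)$. Write $\{\lambda_i\}_{i=1}^m$ for the distinct eigenvalues of $A$ and $E_i^A$ for the associated spectral projections, and start from
$$
T^{A,A,A}_\psi(Y,X)=\sum_{i,k,j=1}^m \psi(\lambda_i,\lambda_k,\lambda_j)\,E_i^A Y E_k^A X E_j^A .
$$
Since $X$ commutes with $A$ it commutes with each $E_j^A$, so $E_k^A X E_j^A=E_k^A E_j^A X=\delta_{kj}E_k^A X$, and the triple sum collapses to $\sum_{i,k}\psi(\lambda_i,\lambda_k,\lambda_k)E_i^A Y E_k^A X=\bigl(\sum_{i,k}\phi_1(\lambda_i,\lambda_k)E_i^A Y E_k^A\bigr)X$, which by (\ref{linearMOI_in spectral proj}) and the definition of $\phi_1$ is exactly $T^{A,A}_{\phi_1}(Y)\times X$. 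Statement $(iii)$ is obtained symmetrically: now $E_i^A X E_k^A=\delta_{ik}X E_i^A$ pushes $X$ to the left, leaving $X\sum_{i,j}\psi(\lambda_i,\lambda_i,\lambda_j)E_i^A Y E_j^A=X\times T^{A,A}_{\phi_2}(Y)$.

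Then $(i)$ follows by combining $(ii)$ (with $Y=X$) with the elementary observation that $T^{A,A}_{\phi_1}(X)=\widehat{\psi}(A)\times X$ whenever $X$ commutes with $A$: indeed $T^{A,A}_{\phi_1}(X)=\sum_{i,k}\phi_1(\lambda_i,\lambda_k)E_i^A X E_k^A=\sum_i \phi_1(\lambda_i,\lambda_i)E_i^A X=\widehat{\psi}(A)X$, using $\phi_1(x,x)=\psi(x,x,x)=\widehat{\psi}(x)$ and $E_i^A X=X E_i^A$. Plugging this into $(ii)$ gives $T^{A,A,A}_\psi(X,X)=\widehat{\psi}(A)X\times X=\widehat{\psi}(A)\times X^2$ (one could equally use $(iii)$ together with $X\widehat{\psi}(A)=\widehat{\psi}(A)X$).

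There is no genuine obstacle here; the computation is routine once the commutation is used. The only point demanding (minimal) care is the bookkeeping of which arguments of $\psi$ get frozen onto the diagonal — the last two for $(ii)$, the first two for $(iii)$ — and this is precisely what the definitions of $\phi_1$, $\phi_2$ and $\widehat{\psi}$ record, so the proof reduces to substituting these into the collapsed sums.
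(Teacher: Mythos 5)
Your proof is correct and follows essentially the same route as the paper's: expand $T_\psi^{A,A,A}$ via the spectral decomposition of $A$ and collapse one (or two) summation indices using the commutation of $X$ with $A$. One small point in your favour: you carry out the collapse with the spectral projections $E_i^A$ onto distinct-eigenvalue subspaces, for which $[X,E_i^A]=0$ is a genuine consequence of $[X,A]=0$; the paper instead asserts that the rank-one projections $P_{\xi_i}$ onto individual eigenvectors of $A$ commute with $X$, which can fail when $A$ has a degenerate eigenvalue (for instance $A=I_n$ and $X=E_{12}$). The paper's computation is saved by the fact that $T_\psi^{A,A,A}$ is basis-independent and could be rewritten with the $E_i^A$ as in \eqref{bilinearMOI_in spectral proj}, which is exactly what you do, so your version is the more careful one. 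That you derive $(i)$ from $(ii)$ rather than computing it directly, as the paper does, is an inessential variation.
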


\begin{proof}
Let $\{\xi_i\}_{i=1}^n$ be an orthonormal basis of eigenvectors
of $A$ and let $\{\lambda_i\}_{i=1}^n$ be the associated $n$-tuple of eigenvalues.
Since $A$ commutes with $X$, it follows that the projection
$P_{\xi_i}$ commutes with $X$ for all $1\le i\le n.$ Thus,
we have that
\begin{align*}
T_{\psi}^{A,A,A}(X,X)
&
=\sum_{i,j,k=1}^n \psi(\lambda_i,
\lambda_k,\lambda_j)P_{\xi_i}XP_{\xi_k} XP_{\xi_j}\\
&
=\sum_{i=1}^n \psi(\lambda_i, \lambda_i,\lambda_i)P_{\xi_i} \times X^2\\
&
=\sum_{i=1}^n \widehat{\psi}(\lambda_i)P_{\xi_i} \times X^2\,=\,\widehat{\psi}(A)\times X^2,
\end{align*}
which proves $(i).$

Similarly, for $(ii),$ we have
\begin{align*}
T_{\psi}^{A,A,A}(Y,X)
&
=\sum_{i,j,k=1}^n \psi(\lambda_i, \lambda_k,\lambda_j)P_{\xi_i}
YP_{\xi_k} XP_{\xi_j}\\
&
=\sum_{i,k=1}^n \psi(\lambda_i, \lambda_k,\lambda_k)
P_{\xi_i}YP_{\xi_k} \times X\\
&
=\sum_{i,k=1}^n \phi_1(\lambda_i, \lambda_k)
P_{\xi_i}YP_{\xi_k} \times X=T^{A,A}_{\phi_1}(Y)\times X.
\end{align*}
The proof of $(iii)$ repeats that of $(ii).$
\end{proof}

\subsection{Divided differences}\label{DD}
Let $f\colon\mathbb R \to \mathbb R$ be a continuous function
and assume that $f$ admits right and left derivatives $f'_r(x)$
and $f'_l(x)$ at each $x\in\mathbb R$. Assume further that
$f'_r,f'_l$ are bounded.
The divided difference of the first order
is defined by
\begin{align*}
{f^{[1]} \left(x_0,x_{1} \right)} :=
\begin{cases}\frac
{ f(x_0) - f(x_1)}{x_0 - x_1}, & \text{if~$x_0
\neq x_1$} \\
\frac{f'_r(x_0)+f'_l(x_0)}{2} & \text{if~$x_0=x_1$}
\end{cases}, \ \ x_0, x_1\in\mathbb{R}.
\end{align*}
Then $f^{[1]}$ is a bounded Borel function.

Let $A_0,A_1$ as in Subsection \ref{LSM}.
We study below the multiplier $T_{f^{[1]}}^{A_0,A_1}$ and give
the formula from \cite[Theorem 5.3]{ACDS}
in the setting of matrices
(see \eqref{f(A)-f(B)_finite} below).
The symbol $f^{[1]}$ and the corresponding Schur multiplier
were first studied by L\"{o}wner in
\cite{L1934}, where he noted that since
$$
f(A_j)\xi_i^{(j)}=f(\lambda_i^{(j)})\xi_i^{(j)}, \ \ \ 1\le i\le n, \ \ j=0,1,
$$
we have
\begin{equation}\label{Lowner's
formula}
\bigl\langle(f(A_0)-f(A_1))(\xi_k^{(1)}),\xi_i^{(0)}
\bigr\rangle
=f^{[1]}(\lambda_i^{(0)},
\lambda_k^{(1)})
\bigl\langle (A_0-A_1)(\xi_k^{(1)}),\xi_i^{(0)}\bigr\rangle.
\end{equation}
Formula \eqref{Lowner's formula} implies that
\begin{equation}\label{f(A)-f(B)_finite}
f(A_0)-f(A_1)=T^{A_0,A_1}_{f^{[1]}}(A_0-A_1).
\end{equation}

Now assume that $f$ is a $C^2$-function, with a bounded second derivative
$f''$. The divided difference of the second order
is defined by
\begin{align}\label{[2]}
{f^{[2]} \left(x_0,x_{1},x_2 \right)} :=
\begin{cases}\frac
{f^{[1]}(x_0,x_1) - f^{[1]}(x_1,x_2)}{x_0 - x_2}, & \text{if~$x_0
\neq x_2$}, \\ \frac {d}{dx_0} f^{[1]}(x_0,x_1), & \text{if~$x_0=x_2$}
\end{cases}, \ \ x_0, x_1,x_2\in\mathbb{R}.
\end{align}
Then $f^{[2]}$ is a bounded Borel function, and this function
is symmetric in the three variables $(x_0,x_1,x_2)$.

The following result may be viewed as a higher dimensional version of
\eqref{f(A)-f(B)_finite}.

\begin{theorem}\label{perturbation theorem}
Let $f\in C^2(\mathbb R)$ and $A_0,A_1,A_2\in B(\mathbb C^n)$ be
self-adjoint operators. Then for all $X\in B(\mathbb C^n)$ we have
$$T^{A_0,A_2}_{f^{[1]}}(X)-T^{A_1,A_2}_{f^{[1]}}(X)=
T^{A_0,A_1,A_2}_{f^{[2]}}(A_0-A_1,X).$$
\end{theorem}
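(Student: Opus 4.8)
The plan is to prove the identity by expanding both sides in terms of spectral projections and reducing everything to a scalar identity for the divided differences. Let $\{\lambda_i^{(j)}\}_{i=1}^{n_j}$ be the distinct eigenvalues of $A_j$ with spectral projections $E_i^{(j)}$ for $j=0,1,2$, and recall the representations \eqref{linearMOI_in spectral proj} and \eqref{bilinearMOI_in spectral proj}. First I would write the left-hand side as
\begin{align*}
T^{A_0,A_2}_{f^{[1]}}(X)-T^{A_1,A_2}_{f^{[1]}}(X)
&=\sum_{i=1}^{n_0}\sum_{j=1}^{n_2} f^{[1]}(\lambda_i^{(0)},\lambda_j^{(2)})E_i^{(0)}XE_j^{(2)}
-\sum_{k=1}^{n_1}\sum_{j=1}^{n_2} f^{[1]}(\lambda_k^{(1)},\lambda_j^{(2)})E_k^{(1)}XE_j^{(2)}.
\end{align*}
Since $\sum_{k=1}^{n_1}E_k^{(1)}=I_n=\sum_{i=1}^{n_0}E_i^{(0)}$, I would insert these resolutions of the identity to rewrite the first sum as $\sum_{i,k,j} f^{[1]}(\lambda_i^{(0)},\lambda_j^{(2)})E_i^{(0)}E_k^{(1)}XE_j^{(2)}$ — wait, this does not immediately work because $E_i^{(0)}$ and $E_k^{(1)}$ need not commute. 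The correct move is instead to insert $\sum_k E_k^{(1)}$ to the \emph{right} of $X$ in neither position; rather one writes $E_i^{(0)}XE_j^{(2)}=\sum_k E_i^{(0)}(E_k^{(1)}X)E_j^{(2)}$ only after observing that what actually appears in $T^{A_0,A_1,A_2}_{f^{[2]}}(A_0-A_1,X)$ is $E_i^{(0)}(A_0-A_1)E_k^{(1)}XE_j^{(2)}$.

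So the cleaner route is to start from the right-hand side. Using \eqref{bilinearMOI_in spectral proj},
\begin{align*}
T^{A_0,A_1,A_2}_{f^{[2]}}(A_0-A_1,X)
&=\sum_{i=1}^{n_0}\sum_{k=1}^{n_1}\sum_{j=1}^{n_2} f^{[2]}(\lambda_i^{(0)},\lambda_k^{(1)},\lambda_j^{(2)})
E_i^{(0)}(A_0-A_1)E_k^{(1)}XE_j^{(2)}.
\end{align*}
Now I apply L\"owner's formula \eqref{Lowner's formula} in the form $E_i^{(0)}(A_0-A_1)E_k^{(1)}$: more precisely, from \eqref{Lowner's formula} and \eqref{f(A)-f(B)_finite} applied with the pair $(A_0,A_1)$ we have $E_i^{(0)}(A_0-A_1)E_k^{(1)}=(\lambda_i^{(0)}-\lambda_k^{(1)})^{-1}E_i^{(0)}(f(A_0)-f(A_1))E_k^{(1)}$ when $\lambda_i^{(0)}\ne\lambda_k^{(1)}$, but a slicker path avoids division: since $E_i^{(0)}A_0=\lambda_i^{(0)}E_i^{(0)}$ and $A_1E_k^{(1)}=\lambda_k^{(1)}E_k^{(1)}$, we get $E_i^{(0)}(A_0-A_1)E_k^{(1)}=(\lambda_i^{(0)}-\lambda_k^{(1)})E_i^{(0)}E_k^{(1)}$. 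Substituting this and using the defining relation $(\lambda_i^{(0)}-\lambda_k^{(1)})f^{[2]}(\lambda_i^{(0)},\lambda_k^{(1)},\lambda_j^{(2)})$, which by the cocycle/telescoping identity for divided differences equals $f^{[1]}(\lambda_i^{(0)},\lambda_j^{(2)})-f^{[1]}(\lambda_k^{(1)},\lambda_j^{(2)})$ (valid in all cases, including coincidences, by continuity of $f^{[1]}$ and $f^{[2]}$), the right-hand side becomes
\begin{align*}
\sum_{i,k,j}\bigl(f^{[1]}(\lambda_i^{(0)},\lambda_j^{(2)})-f^{[1]}(\lambda_k^{(1)},\lambda_j^{(2)})\bigr)E_i^{(0)}E_k^{(1)}XE_j^{(2)}.
\end{align*}

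Finally I would collapse the sums: in the term with $f^{[1]}(\lambda_i^{(0)},\lambda_j^{(2)})$ the only $k$-dependence is in $E_k^{(1)}$, so summing over $k$ gives $\sum_k E_k^{(1)}=I_n$ and we recover $\sum_{i,j}f^{[1]}(\lambda_i^{(0)},\lambda_j^{(2)})E_i^{(0)}XE_j^{(2)}=T^{A_0,A_2}_{f^{[1]}}(X)$; symmetrically, in the term with $f^{[1]}(\lambda_k^{(1)},\lambda_j^{(2)})$ summing over $i$ gives $\sum_i E_i^{(0)}=I_n$, yielding $T^{A_1,A_2}_{f^{[1]}}(X)$. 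This produces exactly the left-hand side. The only genuinely non-routine point is the scalar divided-difference identity
$$
(x_0-x_2)\,f^{[2]}(x_0,x_1,x_2)=f^{[1]}(x_0,x_1)-f^{[1]}(x_1,x_2),
$$
which for $x_0\ne x_2$ is immediate from the definition \eqref{[2]}, and for $x_0=x_2$ reduces to $0=f^{[1]}(x_0,x_1)-f^{[1]}(x_1,x_0)$, true by symmetry of $f^{[1]}$; one should also double-check, using \eqref{[2]} and the smoothness of $f$, that $f^{[2]}$ is the right continuous extension so that the identity holds with no exceptional cases. I expect the bookkeeping with the non-commuting projections (keeping the order $E_i^{(0)}\cdots E_k^{(1)}\cdots E_j^{(2)}$ straight) to be the main place where care is needed, but no real obstacle arises.
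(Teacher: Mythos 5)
Your proof is correct and is essentially the paper's argument unpacked: the paper packages your two manipulations (absorbing $A_0-A_1$ into the symbol via $E_i^{(0)}(A_0-A_1)E_k^{(1)}=(\lambda_i^{(0)}-\lambda_k^{(1)})E_i^{(0)}E_k^{(1)}$, and summing out the middle resolution of the identity) into Lemma \ref{MOIprop} and then uses exactly your scalar identity $(x_0-x_1)f^{[2]}(x_0,x_1,x_2)=f^{[1]}(x_0,x_2)-f^{[1]}(x_1,x_2)$. One cosmetic remark: the identity you isolate at the end, $(x_0-x_2)f^{[2]}(x_0,x_1,x_2)=f^{[1]}(x_0,x_1)-f^{[1]}(x_1,x_2)$, is the definition \eqref{[2]} rather than the permuted version you actually substitute, so you should explicitly invoke the symmetry of $f^{[2]}$ in its three variables (noted in Subsection \ref{DD}) to pass from one to the other.
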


\begin{proof}
Let $X\in B(\mathbb C^n)$ and let
$\psi=f^{[2]}$ and $\phi=f^{[1]}$. Setting $\psi_0,\psi_1,
\tilde \psi_0,\tilde \psi_1$ as in Lemma \ref{MOIprop} $(i),$ $(ii)$, we have
\begin{align}\label{perturbation theorem_1}
(\psi_0-\psi_1)(x_0,x_1,x_2)
&
=x_0 f^{[2]}(x_0,x_1,x_2)-x_1
f^{[2]}(x_0,x_1,x_2)\nonumber \\
&
=f^{[1]}(x_0,x_2)- f^{[1]}(x_1,x_2)\\
&
=(\tilde\psi_0-\tilde\psi_1)(x_0,x_1,x_2).\nonumber
\end{align}
Therefore, by Lemma \ref{MOIprop}, we obtain
\begin{align*}
T^{A_0,A_1,A_2}_{f^{[2]}}(A_0-A_1,X)
&
=T^{A_0,A_1,A_2}_{f^{[2]}}(A_0,X)-
T^{A_0,A_1,A_2}_{f^{[2]}}(A_1,X)\\
&
\stackrel{Lem \ref{MOIprop}(i)}{=}
T^{A_0,A_1,A_2}_{\psi_0}(I_n,X)-T^{A_0,A_1,A_2}_{\psi_1}(I_n,X)\\
&
=T^{A_0,A_1,A_2}_{\psi_0-\psi_1}(I_n,X)\\ &\stackrel{\eqref{perturbation theorem_1}}{=}
T^{A_0,A_1,A_2}_{\tilde\psi_0-\tilde\psi_1}(I_n,X)\\
&
=T^{A_0,A_1,A_2}_{\tilde\psi_0}(I_n,X)-T^{A_0,A_1,A_2}_{\tilde\psi_1}
(I_n,X)\\&\stackrel{Lem \ref{MOIprop}(ii)}
{=}T^{A_0,A_2}_{f^{[1]}}(X)-T^{A_1,A_2}_{f^{[1]}}(X).
\end{align*}
\end{proof}

Let $f\in C^1(\mathbb R)$ and let $A,B\in B(\mathbb C^n)$ be self-adjoint operators.
Then the function $t\mapsto f(A+tB)$ is differentiable and
\begin{equation}\label{e2}
\frac{d}{dt}\bigl(f(A+tB)\bigr)\Big|_{t=0}
=T_{f^{[1]}}^{A,A}(B).
\end{equation}
Indeed this follows e.g. from \cite[Theorem 3.25]{Hiai}. This leads to
the following reformulation of \eqref{SecondTaylor}
in terms of bilinear Schur multipliers.

\begin{theorem}\label{Formula-Taylor} For any self-adjoint operators $A,B\in B(\mathbb C^n)$ and
any $f\in C^2(\mathbb R),$ we have
\begin{equation}\label{Taylor_MOI}
f(A+B)-f(A)-\frac{d}{dt}\bigl(f(A+tB)\bigr)\Big|_{t=0}=
T_{f^{[2]}}^{A+B,A,A}(B,B).
\end{equation}
\end{theorem}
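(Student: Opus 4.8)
The plan is to express the left-hand side of \eqref{Taylor_MOI} entirely in terms of first-order Schur multipliers using \eqref{f(A)-f(B)_finite} and \eqref{e2}, and then to recognize the resulting difference of first-order multipliers as a bilinear multiplier via Theorem \ref{perturbation theorem}. Concretely, set $A_0=A+B$ and $A_1=A_2=A$. Then \eqref{f(A)-f(B)_finite} applied to the pair $(A+B,A)$ gives
$$
f(A+B)-f(A)=T_{f^{[1]}}^{A+B,A}\bigl((A+B)-A\bigr)=T_{f^{[1]}}^{A+B,A}(B).
$$
On the other hand, \eqref{e2} gives $\frac{d}{dt}(f(A+tB))\big|_{t=0}=T_{f^{[1]}}^{A,A}(B)$. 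Subtracting, the left-hand side of \eqref{Taylor_MOI} equals
$$
T_{f^{[1]}}^{A+B,A}(B)-T_{f^{[1]}}^{A,A}(B).
$$

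Now I would invoke Theorem \ref{perturbation theorem} with the choice $A_0=A+B$, $A_1=A$, $A_2=A$, and $X=B$. That theorem states
$$
T^{A_0,A_2}_{f^{[1]}}(X)-T^{A_1,A_2}_{f^{[1]}}(X)=T^{A_0,A_1,A_2}_{f^{[2]}}(A_0-A_1,X),
$$
which with these substitutions reads
$$
T_{f^{[1]}}^{A+B,A}(B)-T_{f^{[1]}}^{A,A}(B)=T_{f^{[2]}}^{A+B,A,A}\bigl((A+B)-A,\,B\bigr)=T_{f^{[2]}}^{A+B,A,A}(B,B).
$$
Combining the two displays yields \eqref{Taylor_MOI} exactly. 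One should note that the hypotheses match: $f\in C^2(\mathbb R)$ ensures $f^{[1]}$ is well-defined with bounded first-order behaviour (so $T_{f^{[1]}}$ makes sense in \eqref{f(A)-f(B)_finite} and \eqref{e2}, the latter needing only $f\in C^1$), and $f^{[2]}$ is a bounded Borel function, so $T_{f^{[2]}}^{A+B,A,A}$ is a well-defined bilinear Schur multiplier in the sense of Subsection \ref{LSM}'s bilinear analogue.

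I do not anticipate a genuine obstacle here: the statement is essentially a bookkeeping assembly of three previously established identities. The only point requiring a modicum of care is verifying that \eqref{f(A)-f(B)_finite} is being applied with the correct ordering of operators — namely that $f(A_0)-f(A_1)=T^{A_0,A_1}_{f^{[1]}}(A_0-A_1)$ with $A_0=A+B$ and $A_1=A$, so that the argument of the multiplier is $B$ and not $-B$ — and likewise that the slots of $T_{f^{[2]}}^{A+B,A,A}$ in Theorem \ref{perturbation theorem} are filled by $(A_0-A_1,X)=(B,B)$ in that order. Since $f^{[2]}$ is symmetric in its three variables the labelling of the two copies of $A$ is immaterial, so no subtlety arises from the repeated operator. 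The proof is therefore a short chain of equalities citing \eqref{f(A)-f(B)_finite}, \eqref{e2}, and Theorem \ref{perturbation theorem}.
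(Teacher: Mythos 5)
Your proposal is correct and is essentially identical to the paper's own proof: both apply \eqref{f(A)-f(B)_finite} to the pair $(A+B,A)$, use \eqref{e2} for the derivative term, and then invoke Theorem \ref{perturbation theorem} with $A_0=A+B$, $A_1=A_2=A$, $X=B$. The extra remarks on operator ordering and the symmetry of $f^{[2]}$ are sensible but not needed beyond what the cited results already guarantee.
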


\begin{proof}
By \eqref{f(A)-f(B)_finite}, we have that
$$
f(A+B)-f(A)=T_{f^{[1]}}^{A+B,A}(B).
$$
Combining with \eqref{e2} and applying
Theorem \ref{perturbation theorem}, we arrive at
\begin{align*}
f(A+B)-f(A)-\frac{d}{dt}\bigl(f(A+tB)\bigr)\Big|_{t=0} & =
T_{f^{[1]}}^{A+B,A}(B)-T_{f^{[1]}}^{A,A}(B)\\ &
=T_{f^{[2]}}^{A+B,A,A}(B,B).
\end{align*}
\end{proof}

\section{Finite-dimensional construction} \label{sec_FD}

In this section we  establish various estimates concerning
finite dimensional operators. The symbol ${\rm const}$ will
stand for uniform positive constants, not depending on the dimension.

Consider the function $f_0\colon\mathbb R\to\mathbb R$ defined by
$$
f_0(x)=|x|,\qquad x\in \mathbb R.
$$
The definition of $f_0^{[1]}$ given in Subsection
\ref{DD} applies to this function.

The following result is proved in \cite[Theorem 13]{Davies}.

\begin{theorem}\label{A and B exist}
For all $n\in\mathbb N$ there exist self-adjoint operators $A_{n},B_{n}\in B(\mathbb C^{2n+1})$ such that
the spectra of $A_n+B_n$ and $A_n$ coincide, $0$ is an eigenvalue of $A_n$,  and
\begin{equation}
\label{A and B exist_3}\|f_0(A_n+B_n)-f_0(A_n)\|_1\ge
{\rm const} \ \log n \|B_n\|_1.
\end{equation}
\end{theorem}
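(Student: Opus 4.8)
The plan is to quote Theorem \ref{A and B exist} verbatim and then verify the three listed properties essentially line by line from the construction in \cite{Davies}. Since the statement is imported wholesale from a published source, the "proof" I would write is really a recollection of why the cited theorem produces operators of the advertised shape, together with a brief explanation of the normalization that puts $0$ in the spectrum of $A_n$.

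First I would recall the Davies construction: one chooses a pair of self-adjoint operators $A_n, B_n$ on an odd-dimensional space $\mathbb C^{2n+1}$ so that $A_n+B_n$ is unitarily equivalent to $A_n$ — this is the key structural demand, and it is what forces the spectra of $A_n+B_n$ and $A_n$ to coincide. In the standard form of the example $A_n$ has symmetric spectrum $\{-n,\dots,-1,0,1,\dots,n\}$ (after rescaling), so that $0$ is automatically an eigenvalue; if the version one cites does not already have $0$ in the spectrum, replacing $A_n$ by $A_n - \mu I$ and $B_n$ unchanged (for a suitable real $\mu$ in the spectrum) preserves the unitary equivalence of $A_n+B_n$ with $A_n$, preserves $\|B_n\|_1$, and shifts both spectra identically, so $0$ can be arranged to be an eigenvalue of $A_n$ with no loss. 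The inequality \eqref{A and B exist_3} is then exactly the logarithmic lower bound for the Lipschitz/commutator estimate for $f_0(x)=|x|$ established in \cite[Theorem 13]{Davies}, using the identification of $f_0(A_n+B_n)-f_0(A_n)$ via the Löwner-type formula.

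The main obstacle — really the only substantive point — is confirming that the normalization placing $0$ in $\mathrm{spec}(A_n)$ can be carried out without disturbing either the coincidence of spectra or the trace-norm lower bound. This is immediate once one notes that translation $A_n\mapsto A_n-\mu I$ commutes with adding $B_n$ and is an isometry on each Schatten class, and that $f_0$ applied before the translation is irrelevant because the Davies bound is stated for the specific operators he constructs. Thus I would simply record:

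\begin{proof}
This is \cite[Theorem 13]{Davies}; we only indicate why $0$ may be taken to be an eigenvalue of $A_n$. In the construction of \cite{Davies}, the self-adjoint operators $A_n,B_n\in B(\mathbb C^{2n+1})$ satisfy that $A_n+B_n$ is unitarily equivalent to $A_n$, whence the spectra of $A_n+B_n$ and $A_n$ coincide, and they satisfy \eqref{A and B exist_3}. If $\mu$ is any eigenvalue of $A_n$, then replacing $A_n$ by $A_n-\mu I_{2n+1}$ leaves $B_n$ unchanged, preserves the unitary equivalence of $A_n+B_n$ with $A_n$ (hence the coincidence of spectra), does not change $\|B_n\|_1$, and puts $0$ in the spectrum of $A_n$. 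Since $f_0(A_n+B_n)-f_0(A_n)$ and $\|B_n\|_1$ for the translated pair are exactly those of \cite[Theorem 13]{Davies}, inequality \eqref{A and B exist_3} continues to hold.
\end{proof}
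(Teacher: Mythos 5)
The key trick in the paper is not a translation but a dilation: Remark \ref{rem_eigenvalues of D_n} states that Davies builds $A_n,B_n$ on $\mathbb C^{2n}$ with $0\notin\mathrm{spec}(A_n)$, and the paper passes from $\mathbb C^{2n}$ to $\mathbb C^{2n+1}$ by forming the direct sums $A_n\oplus 0$, $B_n\oplus 0$. Adding a zero block leaves the trace norms $\|B_n\|_1$ and $\|f_0(A_n+B_n)-f_0(A_n)\|_1$ unchanged (because $f_0(0)=0$, the new summand contributes nothing), and it simultaneously puts $0$ into the spectrum and raises the dimension to $2n+1$.

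Your proposed translation $A_n\mapsto A_n-\mu I$ does not do the same job, and the sentence asserting that ``$f_0(A_n+B_n)-f_0(A_n)$ and $\|B_n\|_1$ for the translated pair are exactly those of \cite[Theorem 13]{Davies}'' is false: $f_0(x)=|x|$ is not affine, so $|A_n+B_n-\mu I|-|A_n-\mu I|$ is not equal to $|A_n+B_n|-|A_n|$, and the Davies lower bound does not transfer. (For an extreme illustration: if $\mu$ were large enough that both $A_n-\mu I$ and $A_n+B_n-\mu I$ became negative definite, the left-hand side would collapse to $\|B_n\|_1$, destroying the $\log n$ growth. For $\mu$ an interior eigenvalue the bound might survive, but this would require a genuine re-examination of Davies's estimates, not the ``immediate'' observation you claim.) In addition, you misstate Davies's construction as living on $\mathbb C^{2n+1}$; it lives on $\mathbb C^{2n}$, so even a correct normalisation of the spectrum would still leave the dimension mismatch unresolved. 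The direct-sum-with-zero modification in the paper fixes both issues at once and, because $f_0(0)=0$, is genuinely norm-preserving, whereas the translation is not.
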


\begin{remark}\label{rem_eigenvalues of D_n}
The operator $A_n$  constructed in \cite{Davies} is a diagonal operator  defined
on $\mathbb C^{2n}$ and $0$ is not an eigenvalue of $A_n$. By changing the dimension
from $2n$ to $2n+1$ and adding a zero on the diagonal, one obtains the operator $A_n$
in Theorem \ref{A and B exist}, with $0$ in the spectrum.
\end{remark}

\begin{corollary}\label{DOI is unbounded}
For all $n\ge 1$, there exist self-adjoint operators
$A_{n},B_{n}\in B(\mathbb C^{2n+1})$ such that
the spectra of $A_n+B_n$ and $A_n$ coincide, and
$$
\big\|T_{f_0^{[1]}}^{A_{n}+B_{n},A_{n}}:\mathcal
S^\infty_{2n+1}\to \mathcal S^\infty_{2n+1} \big\|\ge {\rm const} \ \log n.$$
\end{corollary}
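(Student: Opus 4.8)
The plan is to deduce Corollary \ref{DOI is unbounded} directly from Theorem \ref{A and B exist} by combining the norm estimate \eqref{A and B exist_3} with the L\"owner-type identity \eqref{f(A)-f(B)_finite} applied to $f_0$. First I would take the operators $A_n,B_n\in B(\mathbb C^{2n+1})$ provided by Theorem \ref{A and B exist}, so that $A_n+B_n$ and $A_n$ have the same spectrum and \eqref{A and B exist_3} holds. Applying \eqref{f(A)-f(B)_finite} with $A_0=A_n+B_n$, $A_1=A_n$ and $f=f_0$ gives
\begin{equation*}
f_0(A_n+B_n)-f_0(A_n)=T^{A_n+B_n,A_n}_{f_0^{[1]}}(B_n),
\end{equation*}
so that, since $(A_n+B_n)-A_n=B_n$,
\begin{equation*}
\|f_0(A_n+B_n)-f_0(A_n)\|_1=\bigl\|T^{A_n+B_n,A_n}_{f_0^{[1]}}(B_n)\bigr\|_1
\le \bigl\|T^{A_n+B_n,A_n}_{f_0^{[1]}}:\mathcal S^1_{2n+1}\to\mathcal S^1_{2n+1}\bigr\|\,\|B_n\|_1.
\end{equation*}

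Combining this with the lower bound \eqref{A and B exist_3} and cancelling the factor $\|B_n\|_1$ (which is nonzero, as otherwise the left-hand side would vanish), we obtain
\begin{equation*}
\bigl\|T^{A_n+B_n,A_n}_{f_0^{[1]}}:\mathcal S^1_{2n+1}\to\mathcal S^1_{2n+1}\bigr\|\ge {\rm const}\ \log n.
\end{equation*}
Finally, since $T^{A_n+B_n,A_n}_{f_0^{[1]}}$ acts as a linear Schur multiplier (with matrix $\{f_0^{[1]}(\lambda_i^{(0)},\lambda_k^{(1)})\}$ in the appropriate bases) and linear Schur multipliers have equal norm on $\mathcal S^1$ and on $\mathcal S^\infty$ — the duality principle recalled before Theorem \ref{Multipliers on S_infty}, see also \eqref{normT1} — we get
\begin{equation*}
\bigl\|T^{A_n+B_n,A_n}_{f_0^{[1]}}:\mathcal S^\infty_{2n+1}\to\mathcal S^\infty_{2n+1}\bigr\|
=\bigl\|T^{A_n+B_n,A_n}_{f_0^{[1]}}:\mathcal S^1_{2n+1}\to\mathcal S^1_{2n+1}\bigr\|\ge {\rm const}\ \log n,
\end{equation*}
which is the desired conclusion.

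There is essentially no obstacle here: the corollary is a routine repackaging, and the only points requiring a word of care are that $\|B_n\|_1\neq 0$ (guaranteeing the division is legitimate) and the invocation of the $\mathcal S^1$--$\mathcal S^\infty$ duality for linear Schur multiplier norms. The divided difference $f_0^{[1]}$ is a bounded Borel function by the discussion in Subsection \ref{DD}, so $T^{A_n+B_n,A_n}_{f_0^{[1]}}$ is well defined; note also that the hypothesis that $A_n+B_n$ and $A_n$ share the same spectrum is carried over verbatim, as it will be used in the subsequent construction.
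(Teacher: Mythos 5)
Your proof is correct and follows exactly the same route as the paper: take $A_n,B_n$ from Theorem \ref{A and B exist}, use L\"owner's identity \eqref{f(A)-f(B)_finite} to express $f_0(A_n+B_n)-f_0(A_n)$ as $T^{A_n+B_n,A_n}_{f_0^{[1]}}(B_n)$, divide the trace-norm lower bound \eqref{A and B exist_3} by $\|B_n\|_1$, and pass from $\mathcal S^1$ to $\mathcal S^\infty$ via the duality for linear Schur multipliers. The extra remarks (nonvanishing of $\|B_n\|_1$, boundedness of $f_0^{[1]}$) are harmless and correct.
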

\begin{proof}
Take $A_{n},B_{n}\in B(\mathbb C^{2n+1})$ as in Theorem \ref{A and B exist}.
By \eqref{f(A)-f(B)_finite}, we have that
$$
T_{f_0^{[1]}}^{A_{n}+B_{n},A_{n}}(B_{n})=f_0(A_{n}+B_{n})-f_0(A_{n}).$$
By Theorem \ref{A and B exist}, we have that
\begin{equation*}\|T_{f_0^{[1]}}^{A_{n}+B_{n},A_{n}}(B_{n})\|_1  =\|f_0(A_{n}+B_{n})-f_0(A_{n})\|_1
\ge  {\rm const} \ \log n \|B_n\|_1.
\end{equation*}
Therefore,
\begin{equation*}\big\|T_{f_0^{[1]}}^{A_{n}+B_{n},A_{n}}:\mathcal S^1_{2n+1}\to \mathcal S^1_{2n+1} \big\|
\ge   {\rm const} \ \log n.
\end{equation*}
Since the operator $T_{f_0^{[1]}}^{A_{n}+B_{n},A_{n}}$ is a Schur multiplier, we obtain that
\begin{equation*}\big\|T_{f_0^{[1]}}^{A_{n}+B_{n},A_{n}}:\mathcal S^\infty_{2n+1}\to \mathcal S^\infty_{2n+1} \big\|
 \ge   {\rm const} \ \log n.
\end{equation*}
\end{proof}

Consider the function $g_0:\mathbb R\to \mathbb R$ given by
$$
g_0(x)= x|x| = xf_0(x),\qquad x\in\mathbb R.
$$
Although $g_0$ is not a $C^2$-function, one may define
$g_0^{[2]}(x_0,x_1,x_2)$ by (\ref{[2]}) whenever $x_0,x_1,x_2$ are not
equal. Let us define
$$
\psi_{0}(x_0,x_1,x_2):=\left\{\begin{array}{cc}
g_0^{[2]}(x_0,x_1,x_2), & \text{ if } \ x_0\neq x_1 \
\text{ or } \ x_1\neq x_2\\
2, & x_0=x_1=x_2>0 \\ -2, & x_0=x_1=x_2<0\\
0, & \text{ if } \ x_0=x_1=x_2\\
\end{array}\right..
$$
The function $\psi_0\colon \mathbb R^3\to\mathbb C$ is a bounded Borel function.

The following lemma relates the linear Schur multiplier for $f_0^{[1]}$ and
the bilinear Schur multiplier for $\psi_{0}.$

\begin{lemma}\label{connection between DOI and MOI}
For self-adjoint operators $A_n,B_n\in B(\mathbb C^{n})$ such that
$0$ belongs to the spectrum of $A_n,$ the inequality
\begin{equation}\label{connection between DOI and MOI_inequality}
\big\|T_{\psi_{0}}^{A_n+B_n,A_n,A_n}:\mathcal S^2_{n}\times
\mathcal S^2_{n}\to \mathcal S^1_{n} \big\|\ge
\big\|T_{f_0^{[1]}}^{A_n+B_n,A_n}:\mathcal S^\infty_{n}\to \mathcal S^\infty_{n}\big\|
\end{equation}
holds.
\end{lemma}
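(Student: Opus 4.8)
The plan is to read off the norm of each multiplier from its defining matrix via \eqref{normT1} and \eqref{normT2}, and then to compare the two matrices using Theorem \ref{main theorem_finite}, exploiting the hypothesis $0\in\mathrm{spec}(A_n)$.

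First I would fix orthonormal eigenbases and record the eigenvalue data: write $\{\lambda_i^{(0)}\}_{i=1}^{n}$ for the $n$-tuple of eigenvalues of $A_n+B_n$ and $\{\lambda_k\}_{k=1}^{n}$ for the $n$-tuple of eigenvalues of $A_n$, using the same eigenbasis of $A_n$ for the second and third slots. By \eqref{normT2}, the left-hand side of \eqref{connection between DOI and MOI_inequality} is the norm of the three-dimensional matrix $M=\{\psi_0(\lambda_i^{(0)},\lambda_k,\lambda_j)\}_{i,k,j=1}^{n}$ as a bilinear Schur multiplier from $\mathcal S^2_n\times\mathcal S^2_n$ into $\mathcal S^1_n$, so Theorem \ref{main theorem_finite} yields
$$
\bigl\Vert T_{\psi_{0}}^{A_n+B_n,A_n,A_n}\bigr\Vert=\sup_{1\le k\le n}\bigl\Vert M(k)\colon M_n\to M_n\bigr\Vert,\qquad M(k)=\{\psi_0(\lambda_i^{(0)},\lambda_k,\lambda_j)\}_{i,j=1}^{n}.
$$
On the other side, by \eqref{normT1} the right-hand side of \eqref{connection between DOI and MOI_inequality} equals the norm of the classical matrix $\{f_0^{[1]}(\lambda_i^{(0)},\lambda_j)\}_{i,j=1}^{n}$ on $\mathcal S^\infty_n$, equivalently as a Schur multiplier $M_n\to M_n$.

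The crux is the elementary pointwise identity $\psi_0(x_0,0,x_2)=f_0^{[1]}(x_0,x_2)$, valid for all $x_0,x_2\in\mathbb R$, which I would verify by a short case analysis. Since $g_0^{[1]}(x_0,0)=|x_0|$ and $g_0^{[1]}(0,x_2)=|x_2|$, for $x_0\neq x_2$ one gets $g_0^{[2]}(x_0,0,x_2)=\frac{|x_0|-|x_2|}{x_0-x_2}=f_0^{[1]}(x_0,x_2)$; for $x_0=x_2\neq 0$ one gets $\frac{d}{dx_0}|x_0|=\mathrm{sgn}(x_0)=f_0^{[1]}(x_0,x_0)$; and for $x_0=x_2=0$ both sides vanish by definition. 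Because $0$ is an eigenvalue of $A_n$, there is an index $k_0$ with $\lambda_{k_0}=0$, and for that index the identity gives $M(k_0)=\{f_0^{[1]}(\lambda_i^{(0)},\lambda_j)\}_{i,j=1}^{n}$. Keeping only the term $k=k_0$ in the supremum and using the two norm identifications above,
$$
\bigl\Vert T_{\psi_{0}}^{A_n+B_n,A_n,A_n}\colon\mathcal S^2_n\times\mathcal S^2_n\to\mathcal S^1_n\bigr\Vert\ \ge\ \bigl\Vert M(k_0)\colon M_n\to M_n\bigr\Vert\ =\ \bigl\Vert T_{f_0^{[1]}}^{A_n+B_n,A_n}\colon\mathcal S^\infty_n\to\mathcal S^\infty_n\bigr\Vert,
$$
which is exactly \eqref{connection between DOI and MOI_inequality}.

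There is no serious obstacle here: the argument is essentially bookkeeping. The only points requiring attention are the verification of $\psi_0(x_0,0,x_2)=f_0^{[1]}(x_0,x_2)$ on the diagonal $x_0=x_2$ (including at the origin), where $\psi_0$ is given by its ad hoc definition rather than by $g_0^{[2]}$, and the identification of the slice norm $\Vert M(k_0)\colon M_n\to M_n\Vert$ with $\Vert T_{f_0^{[1]}}^{A_n+B_n,A_n}\colon\mathcal S^\infty_n\to\mathcal S^\infty_n\Vert$ through \eqref{normT1}.
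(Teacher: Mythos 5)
Your proposal is correct and follows essentially the same route as the paper: reduce both sides to matrix-multiplier norms via \eqref{normT1} and \eqref{normT2}, apply Theorem \ref{main theorem_finite} to write the bilinear norm as a supremum over the slices $M(k)$, and verify the pointwise identity $\psi_0(x_0,0,x_2)=f_0^{[1]}(x_0,x_2)$ so that the slice at the zero eigenvalue of $A_n$ realizes the linear multiplier. Your case analysis of that identity (including the diagonal and the origin) matches the paper's, in fact handling the subcase $x_0=0\neq x_2$ slightly more uniformly.
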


\begin{proof}
Let $\{\mu_k\}_{k=1}^{n}$ be the sequence of eigenvalues of the operator $A_n.$
For simplicity, we assume that $\mu_1=0$.

By formulas (\ref{normT1}) and (\ref{normT2}) and by
Theorem \ref{main theorem_finite},
we have that
\begin{equation*}
\big\|T_{\psi_{0}}^{A_n+B_n,A_n,A_n}:\mathcal S^2_{n}\times
\mathcal S^2_{n}\to \mathcal S^1_{n} \big\|=\max_{1\le k\le {n}}
\|T_{\varphi_k}^{A_n+B_n,A_n}:\mathcal S^\infty_{n}\to \mathcal S^\infty_{n}\|,
\end{equation*}
where
$$
\varphi_k(x_0,x_1):=\psi_{0}(x_0,\mu_k, x_1), \
\ x_0,x_1\in\mathbb R,  \ 1\le k\le n.
$$
In particular, we have
\begin{equation*}
\big\|T_{\psi_{0}}^{A_n+B_n,A_n,A_n}:\mathcal S^2_{n}\times
\mathcal S^2_{n}\to \mathcal S^1_{n} \big\|\ge
\|T_{\varphi_1}^{A_n+B_n,A_n}:\mathcal S^\infty_{n}\to \mathcal S^\infty_{n}\|.
\end{equation*}
It therefore suffices to check that
\begin{equation}\label{L17}
\varphi_1= f_0^{[1]}.
\end{equation}
It follows from the definitions that
$\varphi_1(0,0) = \psi_0(0,0,0) = 0 = f_0^{[1]}(0,0)$.

Consider now $(x_0,x_1)\in\mathbb R^2$ such that
$x_0\neq 0$ or $x_1\neq 0$. In that case, we have
$$
\varphi_1(x_0,x_1)=g_0^{[2]}(x_0, 0, x_1).
$$
If $x_0,x_1, 0$ are mutually distinct, then
\begin{align*}
g^{[2]}_0(x_0,0,x_1)& =\frac{g^{[1]}_0(x_0,0)-g_0^{[1]}(0,x_1)}{x_0-x_1}
=\frac{\frac{x_0f_0(x_0)-0}{x_0-0}-\frac{0-x_1f_0(x_1)}{0-x_1}}{x_0-x_1}\\
& =\frac{f_0(x_0)-f_0(x_1)}{x_0-x_1}=f^{[1]}_0(x_0,x_1).
\end{align*}
If $x_0=0$ and $x_1\neq 0,$ then
\begin{align*}
g^{[2]}_0(0,0,x_1) & =\frac{g^{[1]}_0(0,0)-g^{[1]}_0(0,x_1)}{x_0-x_1}=
\frac{g'_0(0)-\frac{0-x_1f_0(x_1)}{0-x_1}}{0-x_1}\\ &
=\frac{f_0(x_1)}{x_1}=f^{[1]}_0(0,x_1).
\end{align*}
The argument is similar, when $x_0\neq 0$ and $x_1= 0.$

Assume now that $x_0=x_1\not=0$. Then
we have
\begin{align*}
g_0^{[2]}(x_0,0,x_0)& =\frac{d}{dx}g_0^{[1]}(x,0)\Big|_{x=x_0}=
\frac{d}{dx}\Big(\frac{xf_0(x)-0}{x-0}\Big)\Big|_{x=x_0}\\ & =
f'_0(x_0)=f^{[1]}_0(x_0,x_0).
\end{align*}
This completes the proof of (\ref{L17}) and
we obtain \eqref{connection between DOI and MOI_inequality}.
\end{proof}

The following is a straightforward consequence of
Corollary \ref{DOI is unbounded} and Lemma \ref{connection between DOI and MOI}.

\begin{corollary}\label{MOI_estimate}
For every $n\ge 1$ there exist self-adjoint operators
$A_{n},B_{n}\in B(\mathbb C^{2n+1})$  such that
the spectra of $A_n +B_n$ and
$A_n$ coincide, and
$$
\big\|T_{\psi_{0}}^{A_{n}+B_{n},A_{n},A_{n}}:\mathcal S^2_{2n+1}\times
\mathcal S^2_{2n+1}\to \mathcal S^1_{2n+1} \big\|\ge {\rm const} \ \log n.
$$
\end{corollary}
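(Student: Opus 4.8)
The plan is to obtain the corollary by directly chaining Corollary~\ref{DOI is unbounded} with Lemma~\ref{connection between DOI and MOI}; all the substantive work has already been carried out in those two results and, before them, in Theorem~\ref{A and B exist}. First I would apply Corollary~\ref{DOI is unbounded} to produce, for each $n\ge 1$, self-adjoint operators $A_{n},B_{n}\in B(\mathbb C^{2n+1})$ such that the spectra of $A_n+B_n$ and $A_n$ coincide and
$$\big\|T_{f_0^{[1]}}^{A_{n}+B_{n},A_{n}}:\mathcal S^\infty_{2n+1}\to \mathcal S^\infty_{2n+1} \big\|\ge {\rm const} \ \log n.$$

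Next I would feed these operators into Lemma~\ref{connection between DOI and MOI}. That lemma requires $0$ to belong to the spectrum of $A_n$, which is not stated verbatim in Corollary~\ref{DOI is unbounded}; however its proof takes $A_n,B_n$ exactly as in Theorem~\ref{A and B exist}, where $0$ is even an eigenvalue of $A_n$, so we may and should retain that property. With $0$ in the spectrum of $A_n$, Lemma~\ref{connection between DOI and MOI} then yields
$$\big\|T_{\psi_{0}}^{A_n+B_n,A_n,A_n}:\mathcal S^2_{2n+1}\times \mathcal S^2_{2n+1}\to \mathcal S^1_{2n+1} \big\|\ge \big\|T_{f_0^{[1]}}^{A_n+B_n,A_n}:\mathcal S^\infty_{2n+1}\to \mathcal S^\infty_{2n+1}\big\|,$$
and combining this with the previous display gives the asserted lower bound ${\rm const} \ \log n$, with the same dimension-independent constant. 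The coincidence of the spectra of $A_n+B_n$ and $A_n$ is inherited directly from Corollary~\ref{DOI is unbounded}.

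There is essentially no obstacle at this stage: the genuine content lies upstream --- in the Davies-type logarithmic lower bound of Theorem~\ref{A and B exist}, in the self-duality of Schur multipliers that lets one pass from an $\mathcal S^1$ estimate to an $\mathcal S^\infty$ estimate inside Corollary~\ref{DOI is unbounded}, and in the identity $\varphi_1=f_0^{[1]}$ from Lemma~\ref{connection between DOI and MOI}, which identifies the bilinear ``slice'' of $T_{\psi_0}$ at the eigenvalue $0$ with the linear multiplier for $f_0^{[1]}$. The only care needed here is bookkeeping: keeping track of the ambient dimension $2n+1$, distinguishing $\psi_0$ (built from $g_0^{[2]}$) from $f_0^{[1]}$, and carrying the ``$0$ in the spectrum'' hypothesis through the chain of citations. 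Once that is done, the corollary follows in two lines.
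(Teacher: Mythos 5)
Your proposal is correct and matches the paper exactly: the paper states this corollary as an immediate consequence of Corollary~\ref{DOI is unbounded} and Lemma~\ref{connection between DOI and MOI}, with no further argument. Your observation that the hypothesis ``$0$ belongs to the spectrum of $A_n$'' must be carried over from Theorem~\ref{A and B exist} (where $0$ is an eigenvalue of $A_n$) is the right piece of bookkeeping to make the chain of citations airtight.
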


We assume below that $n\ge 1$ is fixed and that $A_n, B_n$ are given by Corollary \ref{MOI_estimate}.
The purpose of the series of lemmas \ref{lemma_8}-\ref{lemma_5} below
is to prove Lemma \ref{lemma_3}, which is the final step in
the finite-dimensional
resolution of Peller's problem.
The following result follows immediately from Corollary \ref{MOI_estimate}.

\begin{lemma}\label{lemma_8}
There are operators  $X_n,Y_n\in B(\mathbb C^{2n+1})$ with
$\|X_n\|_2=\|Y_n\|_2=1$, such that
$$
\big\|T_{\psi_{0}}^{A_{n}+B_{n},A_{n},A_{n}}(X_n,Y_n) \big\|_1\ge
{\rm const} \ \log n.
$$
\end{lemma}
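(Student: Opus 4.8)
The plan is to unwind the definition of the operator norm of a bilinear map. By Corollary \ref{MOI_estimate}, the operator
$$
T_{\psi_{0}}^{A_{n}+B_{n},A_{n},A_{n}}\colon \mathcal S^2_{2n+1}\times \mathcal S^2_{2n+1}\to \mathcal S^1_{2n+1}
$$
has norm at least ${\rm const}\,\log n$. By the very definition of the norm of a bounded bilinear operator between normed spaces, this means
$$
\sup\Bigl\{\bigl\|T_{\psi_{0}}^{A_{n}+B_{n},A_{n},A_{n}}(X,Y)\bigr\|_1\,:\, X,Y\in B(\mathbb C^{2n+1}),\ \|X\|_2\le 1,\ \|Y\|_2\le 1\Bigr\}\ \ge\ {\rm const}\,\log n.
$$
Hence there exist $X_n,Y_n\in B(\mathbb C^{2n+1})$ with $\|X_n\|_2\le 1$ and $\|Y_n\|_2\le 1$ such that $\bigl\|T_{\psi_{0}}^{A_{n}+B_{n},A_{n},A_{n}}(X_n,Y_n)\bigr\|_1\ge{\rm const}\,\log n$ (possibly after shrinking the constant, which is harmless since all these constants are uniform in $n$).

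Next I would normalize to get exact equalities $\|X_n\|_2=\|Y_n\|_2=1$. If $X_n=0$ or $Y_n=0$ the value is $0$, so both are nonzero; then replace $X_n$ by $X_n/\|X_n\|_2$ and $Y_n$ by $Y_n/\|Y_n\|_2$. Since $\|X_n\|_2\le 1$ and $\|Y_n\|_2\le 1$, the scalar factor $1/(\|X_n\|_2\|Y_n\|_2)\ge 1$, so by bilinearity
$$
\Bigl\|T_{\psi_{0}}^{A_{n}+B_{n},A_{n},A_{n}}\Bigl(\tfrac{X_n}{\|X_n\|_2},\tfrac{Y_n}{\|Y_n\|_2}\Bigr)\Bigr\|_1 = \frac{1}{\|X_n\|_2\|Y_n\|_2}\bigl\|T_{\psi_{0}}^{A_{n}+B_{n},A_{n},A_{n}}(X_n,Y_n)\bigr\|_1 \ge {\rm const}\,\log n,
$$
so the normalized operators satisfy the required bound. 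This is the whole argument.

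There is no real obstacle here: the statement is essentially a restatement of Corollary \ref{MOI_estimate} in terms of a near-extremal pair of unit vectors, obtained by unpacking the supremum defining the bilinear operator norm, and the only minor point is the harmless rescaling to pass from $\|X_n\|_2\le 1$ to $\|X_n\|_2=1$. One should just record that the constant may be adjusted down by an arbitrarily small amount (or, alternatively, since the supremum over the closed unit ball of a finite-dimensional space is attained, one can even take a genuine maximizer and avoid any loss), and that this adjustment does not affect the uniformity in $n$.
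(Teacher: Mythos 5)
Your proof is correct and is essentially the paper's argument: the paper simply states that Lemma \ref{lemma_8} ``follows immediately from Corollary \ref{MOI_estimate}'', and what you wrote is exactly the routine unwinding of the bilinear operator norm together with the harmless normalization to unit Hilbert--Schmidt norm. Nothing more is needed.
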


Let us denote
\begin{equation}\label{def_H_n}H_n:=\left(\begin{array}{cc}
A_n+B_n & 0 \\ 0 & A_n
\end{array}\right)\
\end{equation}
and consider the operator
$$
T_1 :=T_{\psi_{0}}^{H_n,H_n,H_n}\colon \mathcal S^2_{4n+2}\times
\mathcal S^2_{4n+2}\to \mathcal S^1_{4n+2}.
$$

\begin{lemma}\label{lemma_8_1}
There are operators  $\tilde X_n,\tilde Y_n\in B(\mathbb
C^{4n+2})$ with $\|\tilde X_n\|_2=\|\tilde Y_n\|_2= 1$, such that
$$\big\|T_1(\tilde X_n,\tilde Y_n) \big\|_1\ge
{\rm const} \ \log n.$$
\end{lemma}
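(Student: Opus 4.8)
The plan is to deduce Lemma \ref{lemma_8_1} directly from Lemma \ref{lemma_8} by embedding the operators $X_n,Y_n\in B(\mathbb C^{2n+1})$ into $B(\mathbb C^{4n+2})$ via the $2\times 2$ block structure of $H_n$ given in \eqref{def_H_n}, and then invoking Lemma \ref{tech_lem2}. Concretely, with $A:=A_n+B_n$ and $B:=A_n$ (which have the same spectrum by Corollary \ref{MOI_estimate}, so the hypothesis of Lemma \ref{tech_lem2} is met), I would set
$$
\tilde X_n=\left(\begin{array}{cc} 0 & X_n \\ 0 & 0\end{array}\right)\quad\hbox{and}\quad
\tilde Y_n=\left(\begin{array}{cc} 0 & 0 \\ 0 & Y_n\end{array}\right),
$$
so that Lemma \ref{tech_lem2} applied with $\psi=\psi_0$ yields
$$
T_1(\tilde X_n,\tilde Y_n)=T_{\psi_0}^{H_n,H_n,H_n}(\tilde X_n,\tilde Y_n)=\left(\begin{array}{cc} 0 & T_{\psi_0}^{A_n+B_n,A_n,A_n}(X_n,Y_n) \\ 0 & 0\end{array}\right).
$$

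The main point is then just to track the norms. Since the Schatten $p$-norm of a block matrix with a single nonzero block equals the Schatten $p$-norm of that block, we get $\|\tilde X_n\|_2=\|X_n\|_2=1$, $\|\tilde Y_n\|_2=\|Y_n\|_2=1$, and
$$
\|T_1(\tilde X_n,\tilde Y_n)\|_1=\bigl\|T_{\psi_0}^{A_n+B_n,A_n,A_n}(X_n,Y_n)\bigr\|_1\ge {\rm const}\ \log n,
$$
the last inequality being exactly the conclusion of Lemma \ref{lemma_8}. This gives the claimed operators $\tilde X_n,\tilde Y_n\in B(\mathbb C^{4n+2})$.

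There is essentially no obstacle here; the only thing to be careful about is that Lemma \ref{tech_lem2} requires $A_n+B_n$ and $A_n$ to share the same set of eigenvalues, which is guaranteed by Corollary \ref{MOI_estimate} (the spectra coincide), and that the identification of $\mathcal S^2_{4n+2}$ and $\mathcal S^1_{4n+2}$ with the appropriate $2\times 2$ block operators is isometric with respect to the relevant Schatten norms. I would state these two observations explicitly and then the proof is immediate. The role of this lemma in the larger argument is to transfer the counterexample from the pair $(A_n+B_n,A_n)$, whose difference $B_n$ need not be small, to a single self-adjoint operator $H_n$ acting diagonally, which will later be combined over all $n$ into a direct sum $\oplus_n H_n$ of the kind described in the introduction.
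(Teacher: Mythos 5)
Your proof is correct and follows exactly the same route as the paper: you use the same off-diagonal embedding of $X_n$ and $Y_n$ into $B(\mathbb C^{4n+2})$, invoke Lemma \ref{tech_lem2} (after noting that $A_n+B_n$ and $A_n$ share the same spectrum, as Corollary \ref{MOI_estimate} provides), and read off the Hilbert--Schmidt and trace norms from the single nonzero block. Nothing is missing.
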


\begin{proof}
Take
$$
\tilde X_n:=\left(\begin{array}{cc}
0 & X_n \\ 0_{2n+1} & 0
\end{array}\right), \ \ \tilde Y_n:=\left(\begin{array}{cc}
0_{2n+1} & 0 \\ 0 & Y_n
\end{array}\right),$$
where $X_n,Y_n$ are operators from Lemma \ref{lemma_8} and
$0_{2n+1}$ is the null element of  $B(\mathbb C^{{2n+1}}).$
Clearly,
$\|\tilde X_n\|_2=\|X_n\|_2=1$ and $\|\tilde Y_n\|_2=\|Y_n\|_2=1.$
It follows from Lemma \ref{tech_lem2}
and the fact that $A_n+B_n$ have the same
spectra that
$$
T_1(\tilde X_n,\tilde Y_n)=
\left(\begin{array}{cc}
0 & T_{\psi_{0}}^{A_{n}+B_{n},A_{n},A_{n}}(X_n,Y_n) \\ 0_{2n+1} & 0
\end{array}\right).$$
Therefore, by Lemma \ref{lemma_8},
$$\big\|T_1(\tilde X_n,\tilde Y_n) \big\|_1=
\big\|T_{\psi_{0}}^{A_{n}+B_{n},A_{n},A_{n}}(X_n,Y_n) \big\|_1\ge {\rm const} \ \log n.$$
\end{proof}

\begin{lemma}\label{lemma_7}
There is an operator  $S_n\in B(\mathbb C^{4n+2})$ with $\|S_n\|_2\le 1$ such that
$$\big\|T_1(S_n,S_n^*) \big\|_1\ge {\rm const} \ \log n.$$
\end{lemma}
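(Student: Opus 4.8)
The plan is to deduce Lemma \ref{lemma_7} from Lemma \ref{lemma_8_1} by a polarization-type trick: replace the pair $(\tilde X_n,\tilde Y_n)$ by a single operator $S_n$ whose ``off-diagonal block'' contains $\tilde X_n$ and whose adjoint $S_n^*$ reproduces $\tilde Y_n$ in the appropriate slot. Concretely, I would work inside $B(\mathbb C^{4n+2})$ (not passing to yet another direct sum) and look for $S_n$ of the form $S_n = \tilde X_n + \tilde Y_n^*$ — or some similar combination — and then expand $T_1(S_n, S_n^*)$ by bilinearity into four terms $T_1(\tilde X_n,\tilde Y_n) + T_1(\tilde X_n,\tilde X_n^*) + T_1(\tilde Y_n^*,\tilde Y_n) + T_1(\tilde Y_n^*,\tilde X_n^*)$, and argue that the ``cross'' terms either vanish or can be controlled.

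The key computation will use the explicit block structure from Lemma \ref{lemma_8_1}: with
$$
\tilde X_n = \left(\begin{array}{cc} 0 & X_n \\ 0 & 0 \end{array}\right), \qquad
\tilde Y_n = \left(\begin{array}{cc} 0 & 0 \\ 0 & Y_n \end{array}\right),
$$
we have $\tilde Y_n^* = \left(\begin{smallmatrix} 0 & 0 \\ 0 & Y_n^* \end{smallmatrix}\right)$, so $S_n := \tilde X_n + \tilde Y_n^*$ still has $\|S_n\|_2 \le \|\tilde X_n\|_2 + \|\tilde Y_n^*\|_2$, which is $2$, not $1$; to land in the unit ball I would instead take $S_n := \tfrac12(\tilde X_n + \tilde Y_n^*)$ and accept the loss of a constant factor (absorbed into ${\rm const}$), or better, observe $\tilde X_n$ and $\tilde Y_n^*$ are supported on orthogonal blocks so $\|\tilde X_n + \tilde Y_n^*\|_2^2 = \|\tilde X_n\|_2^2 + \|\tilde Y_n^*\|_2^2 = 2$ and rescale by $1/\sqrt2$. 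Then $S_n^* = \tilde X_n^* + \tilde Y_n$ and I expand, via bilinearity of $T_1$,
$$
T_1(S_n,S_n^*) = \tfrac12\bigl(T_1(\tilde X_n,\tilde X_n^*) + T_1(\tilde X_n,\tilde Y_n) + T_1(\tilde Y_n^*,\tilde X_n^*) + T_1(\tilde Y_n^*,\tilde Y_n)\bigr).
$$
The point is to show the three unwanted terms land in a different block of $B(\mathbb C^{4n+2})$ than $T_1(\tilde X_n,\tilde Y_n)$ does — recall from the proof of Lemma \ref{lemma_8_1} that $T_1(\tilde X_n,\tilde Y_n)$ sits in the upper-right block. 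Using Lemma \ref{tech_lem2} (and its obvious variants for the other placements of off-diagonal blocks), one checks $T_1(\tilde X_n,\tilde X_n^*)$ and $T_1(\tilde Y_n^*,\tilde Y_n)$ are diagonal blocks, while $T_1(\tilde Y_n^*,\tilde X_n^*)$ sits in the lower-left block; hence all four terms occupy pairwise distinct matrix blocks and $\|T_1(S_n,S_n^*)\|_1 \ge \tfrac12\|T_1(\tilde X_n,\tilde Y_n)\|_1 \ge {\rm const}\,\log n$ by Lemma \ref{lemma_8_1}.

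The main obstacle is verifying that those cross terms really do live in separate blocks, i.e.\ that the block-decomposition behaviour of $T_{\psi_0}^{H_n,H_n,H_n}$ established in Lemma \ref{tech_lem2} for the specific inputs there extends to the inputs $(\tilde X_n, \tilde X_n^*)$, $(\tilde Y_n^*, \tilde X_n^*)$, $(\tilde Y_n^*, \tilde Y_n)$ — each of which is a product of two off-diagonal or diagonal block operators. This is a routine but slightly tedious block-matrix bookkeeping: since $H_n$ is block-diagonal with spectral projections $E_i^{H_n} = \mathrm{diag}(E_i^{A_n+B_n}, E_i^{A_n})$, every term $E_i^{H_n} Z E_k^{H_n} W E_j^{H_n}$ respects the $2\times 2$ block grading, and one simply tracks which of the four blocks each of $\tilde X_n, \tilde X_n^*, \tilde Y_n, \tilde Y_n^*$ occupies and composes. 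Once this bookkeeping is done, the norm estimate is immediate from the fact that the trace-norm of a block-diagonal-plus-off-diagonal matrix dominates the trace-norm of any single block, and the conclusion follows.
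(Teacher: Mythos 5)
Your proof is correct, but it takes a different route from the paper. The paper uses the four-term polarization identity
$$
T_1(\tilde X_n,\tilde Y_n)=\frac14\sum_{k=0}^3 i^k\,
T_1\bigl(\tilde X_n+i^k \tilde Y_n^*,\,(\tilde X_n+i^k \tilde Y_n^*)^*\bigr),
$$
applies the triangle inequality, and chooses the $k_0$ realizing the maximum, setting $S_n:=\tfrac12(\tilde X_n+i^{k_0}\tilde Y_n^*)$; this needs no structural information about $T_1$ beyond bilinearity and is the reason the two preceding lemmas were stated for a general bilinear map. You instead fix $S_n:=\tfrac1{\sqrt2}(\tilde X_n+\tilde Y_n^*)$ from the start and use the $2\times2$ block grading induced by the block-diagonal $H_n$: since all spectral projections $E_i^{H_n}$ are block-diagonal, each term $E_i^{H_n}Z E_k^{H_n}W E_j^{H_n}$ respects the grading, so the four bilinear summands land in the four distinct blocks $(1,1)$, $(1,2)$, $(2,1)$, $(2,2)$, and the $(1,2)$-compression (a trace-norm contraction) recovers $\tfrac12 T_1(\tilde X_n,\tilde Y_n)$. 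Both arguments are sound; yours is slightly more explicit (and gives a cleaner constant and a canonical $S_n$ with no auxiliary choice of $k_0$), at the cost of a bookkeeping step that is specific to the present block construction, whereas the paper's polarization step is fully generic. Your block computations check out, and the use of Lemma \ref{tech_lem2} ``and its variants'' is legitimate since those variants amount to exactly the block bookkeeping you describe.
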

\begin{proof}
Take the operators $\tilde X_n, \tilde Y_n\in B(\mathbb C^{4n+2})$ as in Lemma \ref{lemma_8_1}.
By the polarization identity
$$
T_1(\tilde X_n,\tilde Y_n)=\frac14\sum_{k=0}^3 i^k
T_1((\tilde X_n+i^k \tilde Y_n^*),(\tilde X_n+i^k \tilde Y_n^*)^*),
$$
we have that
$$
\|T_1(\tilde X_n,\tilde Y_n)\|_1\le \max_{0\le k\le 3}\|T_1((\tilde X_n+i^k \tilde Y_n^*),
(\tilde X_n+i^k \tilde Y_n^*)^*)\|_1.$$
Taking $k_0$ such that
$$
\|T_1((\tilde X_n+i^{k_0} \tilde Y_n^*),(\tilde X_n+i^{k_0} \tilde Y_n^*)^*)\|_1=
\max_{0\le k\le 3}\|T_1((\tilde X_n+i^k \tilde Y_n^*),(\tilde X_n+i^k \tilde Y_n^*)^*)\|_1,
$$
we set
$$
S_n:=\frac12(\tilde X_n+i^{k_0} \tilde Y_n^*).
$$
Thus, by Lemma \ref{lemma_8_1}, we have
$$
\big\|T_1(S_n,S_n^*) \big\|_1\ge
\frac14\|T_1(\tilde X_n,\tilde Y_n)\|_1\ge  {\rm const} \ \log n
$$
and
$$
\|S_n\|_2\le \frac12(\|\tilde X_n\|_2+\|\tilde Y_n\|_2)=1.
$$
\end{proof}

Let us denote
\begin{equation}\label{def_tilde_H_n}\tilde H_n:=\left(\begin{array}{cc}
H_n & 0 \\ 0 & H_n
\end{array}\right)=\left(\begin{array}{cccc}
A_n+B_n & 0 &0 &0 \\ 0 & A_n & 0& 0 \\  0 & 0 & A_n+B_n& 0 \\ 0 & 0 & 0& A_n \\
\end{array}\right), \ \ n\ge 1,
\end{equation}
and consider the operator
$$
T_2:=T_{\psi_{0}}^{\tilde H_n,\tilde H_n,\tilde H_n}\colon
\mathcal S^2_{8n+4}\times
\mathcal S^2_{8n+4}\to \mathcal S^1_{8n+4}.
$$

\begin{lemma}\label{lemma_6}
There is a self-adjoint  operator
$Z_n\in B(\mathbb C^{8n+4})$ with $\|Z_n\|_2\le 1$ such that
$$\big\|T_2(Z_n,Z_n) \big\|_1\ge {\rm const} \ \log n.$$
\end{lemma}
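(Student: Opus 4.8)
The statement to prove is Lemma \ref{lemma_6}: there is a self-adjoint $Z_n\in B(\mathbb C^{8n+4})$ with $\|Z_n\|_2\le 1$ such that $\|T_2(Z_n,Z_n)\|_1\ge {\rm const}\ \log n$. The natural idea is to upgrade the operator $S_n$ (which is not self-adjoint) from Lemma \ref{lemma_7} to a self-adjoint operator by the standard off-diagonal doubling trick, and to recognize $T_2$ as exactly the multiple operator integral one obtains from $T_1$ under this doubling. Concretely, note that $\tilde H_n$ in \eqref{def_tilde_H_n} is precisely $\left(\begin{smallmatrix} H_n & 0\\ 0 & H_n\end{smallmatrix}\right)$, so it fits the hypothesis of Lemma \ref{tech_lem1} with $A=H_n$. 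Set
$$
Z_n:=\left(\begin{array}{cc} 0 & S_n \\ S_n^* & 0 \end{array}\right),
$$
which is self-adjoint by construction.

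First I would check the norm bound: since $Z_n^*Z_n=\left(\begin{smallmatrix} S_nS_n^* & 0\\ 0 & S_n^*S_n\end{smallmatrix}\right)$, we get $\|Z_n\|_2^2={\rm Tr}(Z_n^*Z_n)={\rm Tr}(S_nS_n^*)+{\rm Tr}(S_n^*S_n)=2\|S_n\|_2^2\le 2$, so after rescaling $Z_n$ by $1/\sqrt 2$ (or simply noting $\|Z_n\|_2\le\sqrt2$ and absorbing the constant) we may arrange $\|Z_n\|_2\le 1$; this only changes the ${\rm const}$ in the conclusion, which is harmless. Next, apply Lemma \ref{tech_lem1} with $A=H_n$, $X=S_n$, $Y=S_n^*$, $\psi=\psi_0$: this gives
$$
T_2(Z_n,Z_n)=T^{\tilde H_n,\tilde H_n,\tilde H_n}_{\psi_0}(Z_n,Z_n)=\left(\begin{array}{cc} T^{H_n,H_n,H_n}_{\psi_0}(S_n,S_n^*) & 0 \\ 0 & T^{H_n,H_n,H_n}_{\psi_0}(S_n^*,S_n) \end{array}\right)=\left(\begin{array}{cc} T_1(S_n,S_n^*) & 0 \\ 0 & T_1(S_n^*,S_n) \end{array}\right).
$$
Since the trace norm of a block-diagonal operator is the sum of the trace norms of its blocks, we obtain
$$
\|T_2(Z_n,Z_n)\|_1=\|T_1(S_n,S_n^*)\|_1+\|T_1(S_n^*,S_n)\|_1\ge\|T_1(S_n,S_n^*)\|_1\ge{\rm const}\ \log n
$$
by Lemma \ref{lemma_7}.

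There is essentially no obstacle here: the only points requiring care are matching the block structure of $\tilde H_n$ to the hypothesis of Lemma \ref{tech_lem1} (each diagonal block is $H_n$, so the two copies of $A$ in that lemma are both $H_n$, which is exactly \eqref{def_tilde_H_n}), and keeping track of the factor $\sqrt2$ in the Hilbert–Schmidt norm of $Z_n$, which is absorbed into the constant. I would write the proof as the short computation above, invoking Lemma \ref{tech_lem1} and Lemma \ref{lemma_7} and the additivity of $\|\cdot\|_1$ on block-diagonal matrices.
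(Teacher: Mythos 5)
Your proof is correct and takes essentially the same route as the paper: set $Z_n$ to be the self-adjoint off-diagonal doubling of $S_n$ (suitably normalized), apply Lemma \ref{tech_lem1} with $A=H_n$ to reduce $T_2(Z_n,Z_n)$ to a block-diagonal matrix whose blocks are $T_1(S_n,S_n^*)$ and $T_1(S_n^*,S_n)$, and invoke Lemma \ref{lemma_7} together with additivity of $\|\cdot\|_1$ on block diagonals. The only cosmetic difference is the normalization (the paper uses a prefactor $\tfrac12$ while you use $1/\sqrt{2}$); your computation $\|Z_n\|_2^2 = 2\|S_n\|_2^2$ is in fact the cleaner one, and either choice gives $\|Z_n\|_2\le 1$ and merely changes the absolute constant.
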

\begin{proof}
Consider the operator $S_n$ from Lemma \ref{lemma_7}.
Setting
$$Z_n:=\frac12\left(\begin{array}{cc}
0 & S_n \\ S_n^* & 0
\end{array}\right),$$
we have
$\|Z_n\|_2=\frac12(\|S_n\|_2+\|S_n^*\|_2)\le 1$
and by Lemma \ref{tech_lem1},
$$
T_2(Z_n,Z_n)=\frac14\left(\begin{array}{cc}
T_1(S_n,S_n^*) & 0 \\ 0 & T_1(S_n^*,S_n)
\end{array}\right).
$$
Therefore, by Lemma \ref{lemma_7}, we arrive at
\begin{align*}
\big\|T_2(Z_n,Z_n) \big\|_1 & =\frac14\big(\big\|T_1(S_n,S_n^*) \big\|_1+
\big\|T_1(S_n^*,S_n) \big\|_1\big)\\ &
\ge \frac14 \big\|T_1(S_n,S_n^*) \big\|_1\ge {\rm const} \ \log n.
\end{align*}
\end{proof}

The following decomposition principle is of independent interest. In this statement
we use the notation $[H,F]= HF -FH$ for the commutator of $H$ and $F$.

\begin{lemma}\label{lemma_4}
For any self-adjoint operators $Z,H \in B(\mathbb C^{n})$, there
are self-adjoint operators $F, G \in B(\mathbb C^{n})$
such that
$$
Z = G + i[H,F],
$$
the matrix $G$ commutes
with $H$,
and
$$ \left\| [H,F]\right\|_2 \leq
2\,\left\| Z\right\|_2 \ \ \text{and}\ \ \left\| G \right\|_2 \leq
\left\| Z \right\|_2.
$$
\end{lemma}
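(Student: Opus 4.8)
The plan is to diagonalise $H$ and to read off $F$ and $G$ from the matrix of $Z$ in an eigenbasis of $H$. Fix an orthonormal basis $\{\xi_i\}_{i=1}^{n}$ of $\mathbb C^n$ consisting of eigenvectors of $H$, say $H\xi_i=\lambda_i\xi_i$ with $\lambda_i\in\mathbb R$, and write $Z=\{z_{ij}\}_{i,j=1}^n$ for the matrix of $Z$ in this basis; self-adjointness of $Z$ gives $z_{ji}=\overline{z_{ij}}$.

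First I would take for $G$ the ``block-diagonal part'' of $Z$ relative to the eigenspaces of $H$, namely $G=\sum_k E_k^{H}\, Z\, E_k^{H}$, where the sum runs over the distinct eigenvalues of $H$; equivalently, in the basis $\{\xi_i\}$ one sets $g_{ij}=z_{ij}$ when $\lambda_i=\lambda_j$ and $g_{ij}=0$ otherwise. Then $G$ is self-adjoint, it commutes with $H$ by construction, and since its matrix is obtained from that of $Z$ by deleting some entries we get $\|G\|_2^2=\sum_{\lambda_i=\lambda_j}|z_{ij}|^2\le\|Z\|_2^2$.

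Next I would define $F$ in the basis $\{\xi_i\}$ by $f_{ij}=-i\,z_{ij}/(\lambda_i-\lambda_j)$ when $\lambda_i\neq\lambda_j$ and $f_{ij}=0$ when $\lambda_i=\lambda_j$. Using $z_{ji}=\overline{z_{ij}}$ and $\lambda_i,\lambda_j\in\mathbb R$, one checks in one line that $\overline{f_{ij}}=f_{ji}$, so $F$ is self-adjoint. Since the matrix of $[H,F]$ in this basis has entries $(\lambda_i-\lambda_j)f_{ij}$, it follows that $i[H,F]$ has entries $z_{ij}$ for $\lambda_i\neq\lambda_j$ and $0$ for $\lambda_i=\lambda_j$; that is, $i[H,F]=Z-G$, which is exactly the desired identity $Z=G+i[H,F]$. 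Finally, the same entrywise description yields $\|[H,F]\|_2^2=\|Z-G\|_2^2=\sum_{\lambda_i\neq\lambda_j}|z_{ij}|^2\le\|Z\|_2^2$, hence $\|[H,F]\|_2\le\|Z\|_2\le 2\|Z\|_2$, as required.

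There is no genuine obstacle here; the only point demanding care is the self-adjointness of $F$, which is precisely where the hypothesis that $Z$ is self-adjoint is used (and which forces the factor $-i$ in the definition of $f_{ij}$). I also note that no bound on $\|F\|$ itself is claimed or needed, so the possibility that $\lambda_i-\lambda_j$ is small in the denominator is harmless.
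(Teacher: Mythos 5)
Your decomposition is exactly the one the paper uses: $G=\sum_k E_k Z E_k$ is the block-diagonal part of $Z$ relative to the spectral decomposition of $H$, and your $f_{ij}=-i\,z_{ij}/(\lambda_i-\lambda_j)$ is precisely the matrix of the paper's $F=i\sum_{j\neq k}(h_k-h_j)^{-1}E_jZE_k$ in the eigenbasis. The place where you depart from the paper is in the norm estimates. The paper proves $\|G\|_2\le\|Z\|_2$ by writing $G$ as the average $\int_{-\pi}^{\pi}U_tZU_t^*\,\frac{dt}{2\pi}$ over the family of unitaries $U_t=\sum_je^{ijt}E_j$, and then bounds $\|[H,F]\|_2=\|Z-G\|_2\le 2\|Z\|_2$ by the triangle inequality; your argument instead reads both bounds off the Hilbert--Schmidt norm directly, using that $G$ and $i[H,F]=Z-G$ have disjoint supports in the eigenbasis, so $\|G\|_2^2+\|[H,F]\|_2^2=\|Z\|_2^2$. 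Your route is more elementary and in fact gives the sharper bound $\|[H,F]\|_2\le\|Z\|_2$, of which $\le 2\|Z\|_2$ is a weakening. The one thing the paper's averaging argument buys, which your computation does not, is that it proves $\|G\|\le\|Z\|$ for every unitarily invariant norm, not just the Hilbert--Schmidt norm; but since the lemma only asserts the $\mathcal S^2$ estimate, your proof is complete as it stands.
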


\begin{proof}
Let
$$
h_1, h_2, \ldots, h_m
$$
be the pairwise distinct eigenvalues of the operator~$H$ and let
\begin{equation*}
E_1, E_2, \ldots, E_m
\end{equation*}
be the associated spectral projections, so that
$$
H = \sum_{j =1}^m h_j E_j.
$$
We set
\begin{equation*}
G = \sum_{j = 1}^m E_j Z E_j \qquad\text{and}\qquad F = i\sum_{\substack{j=1 \\ j \neq k}}^m
(h_k - h_j)^{-1}\, E_j Z E_k.
\end{equation*}
Since
$$
HE_j = h_j E_j,
$$
we have
$$
[H,E_j Z E_k ]
=  \ H \times E_j Z E_k - \  E_j Z E_k \times H = \left( h_j - h_k \right)
\times E_j Z E_k.
$$
Consequently,
$$
i[H,F] = \sum_{\substack{j=1 \\ j \neq k}}^m E_j Z
E_k
$$ and hence
$$ G + i[H,F] = Z.
$$
Further $F,G$ are self-adjoint and it is clear that $[G,H]=0$.
Hence the first two claims of the lemma are proved.

Now take
$$
U_t = \sum_{j = 1}^m e^{ijt}\, E_j,\ \ t
\in [-\pi, \pi].
$$
 Then
\begin{equation*}
\int_{-\pi}^\pi U_t Z U_t^*\, \frac {\mathrm dt}{2\pi} = \sum_{j,
k = 1}^m E_j Z E_k \, \int_{-\pi}^\pi e^{i (j - k)\, t}\, \frac
{\mathrm dt}{2\pi}\, =\,\sum_{j = 1}^m E_j Z E_j\,=G.
\end{equation*}
Since $U_t$ is
unitary, we deduce that
$$
\left\| G \right\|_2 \leq \int_{-\pi}^\pi \left\| U_t Z
U_t^*\right\|_2\, \frac {\mathrm dt}{2\pi} \leq \left\|
Z\right\|_2.
$$
Moreover writing
$$
i[H,F] = Z - G
$$
we deduce that
$$
\left\|
[H,F]\right\|_2 \leq 2 \left\| Z\right\|_2.
$$
\end{proof}

\begin{lemma}\label{lemma_5}
There is a self-adjoint operator
$F_n\in B(\mathbb C^{8n+4})$ such that $\| [\tilde H_n,F_n]\|_2\le 2$ and
$$
\big\|T_2\bigl(i[ \tilde H_n,F_n],i[ \tilde H_n,F_n]\bigr) \big\|_1\ge {\rm const} \ \log n- 10.
$$
\end{lemma}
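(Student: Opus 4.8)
The plan is to obtain this from Lemma~\ref{lemma_6} by applying the decomposition principle of Lemma~\ref{lemma_4}. Start with the self-adjoint $Z_n\in B(\mathbb C^{8n+4})$ provided by Lemma~\ref{lemma_6}, so that $\|Z_n\|_2\le 1$ and $\|T_2(Z_n,Z_n)\|_1\ge{\rm const}\,\log n$. Then apply Lemma~\ref{lemma_4} with $H=\tilde H_n$ and $Z=Z_n$, obtaining self-adjoint operators $F_n,G_n\in B(\mathbb C^{8n+4})$ with
$$
Z_n=G_n+i[\tilde H_n,F_n],
$$
where $G_n$ commutes with $\tilde H_n$, $\|[\tilde H_n,F_n]\|_2\le 2\|Z_n\|_2\le 2$ and $\|G_n\|_2\le\|Z_n\|_2\le 1$. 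This $F_n$ is the operator we want; the bound $\|[\tilde H_n,F_n]\|_2\le 2$ is then immediate, and everything reduces to bounding below $\|T_2(W_n,W_n)\|_1$, where I write $W_n:=i[\tilde H_n,F_n]=Z_n-G_n$.

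By bilinearity of $T_2$,
$$
T_2(W_n,W_n)=T_2(Z_n,Z_n)-T_2(Z_n,G_n)-T_2(G_n,Z_n)+T_2(G_n,G_n),
$$
so that
$$
\|T_2(W_n,W_n)\|_1\ge\|T_2(Z_n,Z_n)\|_1-\|T_2(Z_n,G_n)\|_1-\|T_2(G_n,Z_n)\|_1-\|T_2(G_n,G_n)\|_1.
$$
The first term is at least ${\rm const}\,\log n$; the whole point is that the three remaining ``correction'' terms are bounded by absolute constants, and this is where the commutation $[\tilde H_n,G_n]=0$ enters. Indeed, Lemma~\ref{lem_commuts} with $A=\tilde H_n$ yields $T_2(G_n,G_n)=\widehat{\psi_0}(\tilde H_n)\,G_n^2$, $T_2(Z_n,G_n)=T^{\tilde H_n,\tilde H_n}_{\phi_1}(Z_n)\,G_n$ and $T_2(G_n,Z_n)=G_n\,T^{\tilde H_n,\tilde H_n}_{\phi_2}(Z_n)$, where $\phi_1(x_0,x_1)=\psi_0(x_0,x_1,x_1)$ and $\phi_2(x_0,x_1)=\psi_0(x_0,x_0,x_1)$.

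To finish I would use two elementary facts. First, $\|\psi_0\|_\infty\le 2$: on the diagonal $\psi_0$ takes only the values $0,\pm 2$, while off the diagonal it equals the second divided difference of $g_0$, whose derivative $g_0'(x)=2|x|$ is Lipschitz with constant $2$, so $|\psi_0|\le 1$ there. In particular $\|\widehat{\psi_0}\|_\infty\le 2$, and, since the identifications in Section~\ref{sec_Connection} are isometric for every Schatten norm, $\|T^{\tilde H_n,\tilde H_n}_{\phi_i}:\mathcal S^2_{8n+4}\to\mathcal S^2_{8n+4}\|$ equals the supremum of $\phi_i$ over the eigenvalue pairs of $\tilde H_n$, hence is $\le 2$. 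Second, $\|UV\|_1\le\|U\|_2\|V\|_2$ for Schatten classes and $\|G_n^2\|_1=\|G_n\|_2^2$. Together these give $\|T_2(Z_n,G_n)\|_1\le 2\|Z_n\|_2\|G_n\|_2\le 2$, $\|T_2(G_n,Z_n)\|_1\le 2$ and $\|T_2(G_n,G_n)\|_1\le 2\|G_n\|_2^2\le 2$, whence $\|T_2(W_n,W_n)\|_1\ge{\rm const}\,\log n-6\ge{\rm const}\,\log n-10$. The only step requiring any care — the ``main obstacle'', modest as it is — is checking that the correction terms are $O(1)$ uniformly in $n$; this rests entirely on the commutation $[\tilde H_n,G_n]=0$, which collapses the bilinear multiplier in those terms to a product of a uniformly bounded linear Schur multiplier with a fixed Hilbert--Schmidt operator.
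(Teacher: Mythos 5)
Your proof is correct and follows essentially the same route as the paper: decompose $Z_n=G_n+i[\tilde H_n,F_n]$ via Lemma~\ref{lemma_4}, then use Lemma~\ref{lem_commuts} together with $[G_n,\tilde H_n]=0$ to bound the error terms, relying on $\|\psi_0\|_\infty\le 2$ and Cauchy--Schwarz in the Schatten scale. Your minor algebraic variant (expanding $T_2(W_n,W_n)$ directly rather than $T_2(Z_n,Z_n)$, so the cross terms involve $Z_n$ instead of the commutator) yields the slightly better constant $-6$, which of course still implies the stated bound $-10$.
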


\begin{proof}
Take the operator $Z_n$ in
$B(\mathbb C^{8n+4})$
given by Lemma \ref{lemma_6}.
By Lemma \ref{lemma_4}, we may choose self-adjoint operators
$F_n$ and $G_n$ from $B(\mathbb C^{8n+4})$ such that
$$
Z_n = G_n + i[\tilde H_n,F_n],  \ \ \ [G_n,\tilde H_n] = 0,
$$
and
\begin{equation}\label{lemma_5_1} \| [ \tilde H_n,F_n]\|_2 \leq
2\,\left\| Z_n\right\|_2 ,\qquad \left\| G_n \right\|_2 \leq
\left\| Z_n \right\|_2.
\end{equation}
We compute
\begin{equation}\label{eq:Step3split}
\begin{aligned}[b]
T_2(Z_n, Z_n) &{}= T_2\Bigl( G_n + i[\tilde H_n,F_n],\,
G_n + i[\tilde H_n,F_n] \Bigr) \\
&{}= T_2\Bigl(G_n, \, G_n\Bigr) \\ &{}+ T_2
\Bigl( G_n, \, i[\tilde H_n,F_n] \Bigr) \\
&{}
+ T_2 \Bigl( i[\tilde H_n,F_n],\, G_n \Bigr) \\
&{}
+ T_2\Bigl( i[\tilde H_n,F_n],\,i[\tilde H_n,F_n] \Bigr).
\end{aligned}
\end{equation}
We shall estimate the first three summands above.
The operator $G_n$ commutes with $\tilde H_n$ hence
by the first part of Lemma \ref{lem_commuts},
$$
T_2(G_n,G_n)=\widehat{\psi_0}(\tilde{H}_n)\times G_n^2.
$$
Furthermore
$\widehat{\psi_0}(x)=2$ if $x>0$, $\widehat{\psi_0}(x)=-2$ if $x<0$
and $\widehat{\psi_0}(0)=0$. Hence
$$
\Vert \widehat{\psi_0}(\tilde{H}_n)\Vert_\infty\leq 2.
$$
This implies that
$$
\bigl\Vert T_2(G_n,G_n)\bigr\Vert_1\leq \Vert \widehat{\psi_0}(\tilde{H}_n)\Vert_\infty
\Vert G_n\Vert^2_2\leq 2\Vert Z_n\Vert_2^2\leq 2.
$$
Next applying the second and third part of Lemma \ref{lem_commuts}, we obtain
$$
T_2 \Bigl( i[\tilde H_n,F_n],\, G_n \Bigr) = i
T_{\phi_1}^{\tilde H_n, \tilde H_n} \Bigl( [\tilde H_n,F_n]\Bigr) \times G_n
$$
and
$$
T_2 \Bigl( G_n,i[\tilde H_n,F_n] \Bigr) = i \ G_n \times
T_{\phi_2}^{\tilde H_n,\tilde H_n} \Bigl( [\tilde H_n,F_n] \Bigr),
$$
where
\begin{multline*}
\phi_1(x_0,x_1) =\psi_{0}(x_0, x_1,x_1) \ \ \text{and} \ \
\phi_2(x_0,x_1) =\psi_{0}(x_0, x_0,x_1), \ \ x_0,x_1\in\mathbb R.
\end{multline*}
Observe that by  the Mean Value Theorem for divided differences (see e.g. \cite{dBoor}),
we have $\Vert \psi_0\Vert_\infty\leq 2$.
Hence $\|\phi_1\|_\infty\le 2$ and $\|\phi_2\|_\infty\le 2$, which implies
\begin{align*}
\Bigl\|T_{\phi_1}^{\tilde H_n, \tilde H_n} \Bigl( [\tilde H_n,F_n]
\Bigr) \times G_n \Bigr\|_1 &
\le \Bigl\|T_{\phi_1}^{\tilde H_n, \tilde H_n} \Bigl( [\tilde H_n,F_n]
\Bigr)\Bigr\|_2 \|G_n\|_2 \\
&
\le \|\phi_1\|_\infty \| [\tilde H_n,F_n]\|_2\|G_n\|_2\\
&
\leq 2\|\phi_1\|_\infty\|Z_n\|^2_2 \leq 4
\end{align*}
by \eqref{lemma_5_1} and Lemma \ref{lemma_6}. Similarly,
$$
\Bigl\|G_n\times T_{\phi_2}^{\tilde H_n, \tilde H_n} \Bigl( [\tilde H_n,F_n]
\Bigr) \Bigr\|_1\le 4.
$$
Combining the preceding estimates with \eqref{eq:Step3split}, we arrive at
$$
\|T_2(Z_n, Z_n)\|_1\le 10 +\Big\|
T_2\Bigl( i [\tilde H_n,F_n],\, i[\tilde H_n,F_n] \Bigr)\Big\|_1.
$$
Applying  Lemma \ref{lemma_6}, we deduce the result.
\end{proof}

\begin{lemma}\label{lemma_3}
There exists a $C^2$-function $g$ with a
bounded second derivative and there
exists $N\in\mathbb N$ such that for any sequence
$\{\alpha_n\}_{n\ge N}$ of positive real numbers there is  a sequence
of operators $\tilde B_{n}\in B(\mathbb C^{8n+4})$ such that
$\| \tilde B_{n}\|_2\le 4\alpha_n,$ for all $n\ge N,$ and
$$
\|T_{g^{[2]}}^{\tilde A_n+\tilde B_n, \tilde A_n, \tilde A_n}
(\tilde B_n,\tilde B_n)\|_1\ge {\rm const}\, \alpha_n^2  \ \log n,
\ \ n\ge N.
$$
\end{lemma}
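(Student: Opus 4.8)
The plan is to replace $g_0(x)=x|x|$ by a genuine $C^2$-function $g$ agreeing with $g_0$ away from $0$, and then to turn the symmetric multiple operator integral $T_2(W_n,W_n)$ of Lemma~\ref{lemma_5} (where $W_n:=i[\tilde H_n,F_n]$) into the quantity $T_{g^{[2]}}^{\tilde A_n+\tilde B_n,\tilde A_n,\tilde A_n}(\tilde B_n,\tilde B_n)$ by a small unitary perturbation together with the continuity Lemma~\ref{lem_contin_symbol}. First I fix an odd $C^2$-function $g$ with $\|g''\|_\infty<\infty$ such that $g(x)=x|x|$ for $|x|\ge1$ and $g'(0)=0$; e.g.\ $g(x)=x^5-3x^4+3x^3$ on $[0,1]$, extended oddly on $[-1,1]$ and by $g_0$ on $\{|x|\ge1\}$ (this is $C^2$ with $\|g''\|_\infty\le3$). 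The purpose of the two normalizations is the elementary fact: whenever $\mu_0,\mu_1,\mu_2$ each lie in $\{0\}\cup\{|x|\ge1\}$ and are not all equal, then $g^{[2]}(\rho\mu_0,\rho\mu_1,\rho\mu_2)=g_0^{[2]}(\mu_0,\mu_1,\mu_2)=\psi_0(\mu_0,\mu_1,\mu_2)$ for every $\rho\ge1$, because $g_0^{[2]}$ is homogeneous of degree $0$ on the region where no argument vanishes, because $g^{[1]}(\mu,0)=|\mu|=g_0^{[1]}(\mu,0)$ for $|\mu|\ge1$, and because $g'(0)=g_0'(0)=0$ handles the triples with a zero entry.

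\emph{Matching with $T_2$.} Fix $n$ and let $\delta_n>0$ be the least modulus of a nonzero eigenvalue of $\tilde H_n$; for $\rho_n\ge1/\delta_n$ the operator $\tilde A_n:=\rho_n\tilde H_n$ has spectrum in $\{0\}\cup\{|x|\ge1\}$. Since $W_n=i[\tilde H_n,F_n]$ is off-diagonal for $\tilde H_n$ (that is, $E_i^{\tilde H_n}W_nE_i^{\tilde H_n}=0$), in
\[
T_{g^{[2]}}^{\tilde A_n,\tilde A_n,\tilde A_n}(W_n,W_n)=\sum_{i,k,j}g^{[2]}(\rho_n\lambda_i,\rho_n\lambda_k,\rho_n\lambda_j)\,E_i^{\tilde H_n}W_nE_k^{\tilde H_n}W_nE_j^{\tilde H_n}
\]
only terms with $\lambda_i\ne\lambda_k$ and $\lambda_k\ne\lambda_j$ contribute, so by the previous paragraph every surviving symbol value equals $\psi_0(\lambda_i,\lambda_k,\lambda_j)$; hence $T_{g^{[2]}}^{\tilde A_n,\tilde A_n,\tilde A_n}(W_n,W_n)=T_{\psi_0}^{\tilde H_n,\tilde H_n,\tilde H_n}(W_n,W_n)=T_2(W_n,W_n)$, whose $\mathcal S^1$-norm is at least ${\rm const}\,\log n-10$ by Lemma~\ref{lemma_5}.

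\emph{From the symmetric integral to the remainder.} For $s>0$ set $\tilde B_n^{(s)}:=e^{isF_n}\tilde A_n e^{-isF_n}-\tilde A_n$, a self-adjoint operator with $\tilde A_n+\tilde B_n^{(s)}=e^{isF_n}\tilde A_n e^{-isF_n}$ unitarily equivalent to $\tilde A_n$, and $\tilde B_n^{(s)}=-s\rho_n W_n+R^{(s)}$ with $\|R^{(s)}\|_2\le D_n s^2\rho_n$ for some $D_n$ independent of $\rho_n$. By \eqref{normT2} and Theorem~\ref{main theorem_finite} the bilinear Schur multiplier norm of $T_{g^{[2]}}^{C,\tilde A_n,\tilde A_n}$ depends only on the set of spectral values, hence equals $\|T_{g^{[2]}}^{\tilde A_n,\tilde A_n,\tilde A_n}\|$ for every unitary conjugate $C$ of $\tilde A_n$; expanding $T_{g^{[2]}}^{\tilde A_n+\tilde B_n^{(s)},\tilde A_n,\tilde A_n}(\tilde B_n^{(s)},\tilde B_n^{(s)})$ bilinearly therefore gives $s^2\rho_n^2\,T_{g^{[2]}}^{e^{isF_n}\tilde A_n e^{-isF_n},\tilde A_n,\tilde A_n}(W_n,W_n)+O(s^3\rho_n^2)$, and by Lemma~\ref{lem_contin_symbol} (applied with $U_m=e^{is_mF_n}\to I_{8n+4}$ and everything else fixed) the leading factor tends to $T_{g^{[2]}}^{\tilde A_n,\tilde A_n,\tilde A_n}(W_n,W_n)=T_2(W_n,W_n)$ as $s\to0$. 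All implied thresholds depend on $n$ but not on $\rho_n$, so there is $s_n^*>0$ (a function of the data of Lemma~\ref{lemma_5} only) with $\|\tilde B_n^{(s)}\|_2\le3s\rho_n$ and $\|T_{g^{[2]}}^{\tilde A_n+\tilde B_n^{(s)},\tilde A_n,\tilde A_n}(\tilde B_n^{(s)},\tilde B_n^{(s)})\|_1\ge\frac12 s^2\rho_n^2\|T_2(W_n,W_n)\|_1$ for $0<s\le s_n^*$. Finally pick $N$ with ${\rm const}\,\log n-10\ge\frac12{\rm const}\,\log n$ for $n\ge N$; given $\{\alpha_n\}_{n\ge N}$, set $\rho_n:=\max(1/\delta_n,\alpha_n/s_n^*)$, $\tilde A_n:=\rho_n\tilde H_n$, $s_n:=\alpha_n/\rho_n\le s_n^*$ and $\tilde B_n:=\tilde B_n^{(s_n)}$. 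Then $\|\tilde B_n\|_2\le3s_n\rho_n=3\alpha_n\le4\alpha_n$ and $\|T_{g^{[2]}}^{\tilde A_n+\tilde B_n,\tilde A_n,\tilde A_n}(\tilde B_n,\tilde B_n)\|_1\ge\frac12\alpha_n^2\|T_2(W_n,W_n)\|_1\ge\frac14{\rm const}\,\alpha_n^2\log n$; by Theorem~\ref{Formula-Taylor} the left-hand side is the Koplienko--Neidhardt remainder of $g$ at $\tilde A_n$ in direction $\tilde B_n$, which is the assertion of the lemma.

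\emph{Main obstacle.} The crux is the choice and handling of $g$: one needs a genuinely $C^2$ function whose second divided difference reproduces $\psi_0$ on the rescaled spectra --- which forces both the cut $g=g_0$ on $\{|x|\ge1\}$ and the normalization $g'(0)=0$, and succeeds only because $W_n$ is a commutator, so that $\psi_0$ is never tested on its full diagonal, where it differs from $g_0^{[2]}$ --- and one must keep the remainder bounded below while letting $\rho_n\to\infty$ to push the nonzero spectrum of $\tilde A_n$ out of the smoothing interval $(-1,1)$ of $g$; this is exactly why the perturbation $\tilde B_n$ is introduced through a unitary conjugation, leaving all spectral values untouched, rather than as a direct additive perturbation.
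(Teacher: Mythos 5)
Your construction is, after the reparametrization $\rho_n=1/t_n$, $s=\alpha_n t$, literally the paper's: the paper also sets $\tilde A_n=\tfrac{1}{t_n}\tilde H_n$ and $\tilde B_n=\tfrac{1}{t_n}\bigl(e^{it_n\alpha_nF_n}\tilde H_ne^{-it_n\alpha_nF_n}-\tilde H_n\bigr)$, so that $\tilde A_n+\tilde B_n$ is a unitary conjugate of $\tilde A_n$, and it uses the same Lemma \ref{lemma_5}. The genuine difference is how the symbol is matched to $\psi_0$. The paper requires $g^{(j)}(0)=0$ for $j=0,1,2$, proves the pointwise convergence $g_t\to\psi_0$ \emph{everywhere} (including on the full diagonal, which is why $g''(0)=0$ is needed there), and passes to the limit via Lemmas \ref{lem_homogen_symbol} and \ref{lem_contin_symbol}. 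You instead observe that $W_n=i[\tilde H_n,F_n]$ satisfies $E_i^{\tilde H_n}W_nE_i^{\tilde H_n}=0$, so the full diagonal of the symbol is never tested in the main term, and you get the \emph{exact} identity $T_{g^{[2]}}^{\tilde A_n,\tilde A_n,\tilde A_n}(W_n,W_n)=T_2(W_n,W_n)$ for $\rho_n\ge1/\delta_n$ using only $g(0)=g'(0)=0$ and $g=g_0$ off $(-1,1)$. That is a clean simplification of the limiting step, and your verification of the "elementary fact" (homogeneity of $g_0^{[2]}$ plus agreement of $g,g'$ with $g_0,g_0'$ on $\{0\}\cup\{|x|\ge1\}$) is correct.

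The one point you should tighten is the quantifier order in the last step. You choose $s_n^*$ first and then set $\rho_n=\max(1/\delta_n,\alpha_n/s_n^*)$, so every threshold entering the definition of $s_n^*$ must be uniform in $\rho_n$. For the cross terms $T(W_n,R^{(s)})$, $T(R^{(s)},W_n)$, $T(R^{(s)},R^{(s)})$ and for the continuity estimate you need a bound on $\bigl\Vert T_{g^{[2]}}^{C,\tilde A_n,\tilde A_n}\colon\mathcal S^2\times\mathcal S^2\to\mathcal S^1\bigr\Vert$ that does not depend on $\rho_n$; your remark that this norm equals $\Vert T_{g^{[2]}}^{\tilde A_n,\tilde A_n,\tilde A_n}\Vert$ does not by itself give that, since the eigenvalues $\rho_n\lambda_i$ move with $\rho_n$. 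The claim is true — by Theorem \ref{main theorem_finite} the norm is $\max_k\Vert M(k)\colon M_m\to M_m\Vert\le m\,\Vert g^{[2]}\Vert_\infty\le m\,\Vert g''\Vert_\infty$ with $m=8n+4$, which depends on $n$ only — but you should say so explicitly, since this uniformity is exactly what makes your two-step choice non-circular. (The paper avoids the issue altogether by fixing $\alpha_n$ inside the exponent, taking the limit $t\to0^+$ for that fixed data, and only then choosing $t_n$.)
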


\begin{proof}
 Changing the constant `const' in Lemma \ref{lemma_5} by half of its value, we can change the
estimate from that statement into
\begin{equation}\label{chooseN}
\big\|T_2\bigl(i[ \tilde H_n,F_n],i[ \tilde H_n,F_n]\bigr)
\big\|_1\ge {\rm const} \ \log n,\quad n\geq N,
\end{equation}
for sufficiently large $N\in\mathbb N$.

Take an  arbitrary sequence $\{\alpha_n\}_{n\ge N}$ of positive real numbers,
take the operator $F_n$ from Lemma \ref{lemma_5}
and denote
$$
\tilde F_n:=\alpha_n F_n.
$$
For any $t>0$, consider
$$
\gamma_t(\tilde H_n)=e^{it\tilde F_n} \tilde H_n
e^{-it\tilde F_n}, \quad \hbox{and} \quad V_{n,t}:=\frac{\gamma_t (\tilde H_n)-\tilde H_n}{t}.
$$
On the one hand, it follows from the identity
$\frac{d}{dt}\bigl(e^{it\tilde F_n}\bigr)\vert_{t=0} =
i\tilde F_n$ that
$$
V_{n,t}\longrightarrow i[\tilde F_n,\tilde H_n],  \ \ t\to +0.
$$
It therefore follows from Lemma \ref{lemma_5} that there is $t_1>0$ such that
\begin{equation}\label{VntEstimate}
\|V_{n,t}\|_2\le
 2\|[\tilde F_n,\tilde H_n]\|_2=2
\alpha_n\|[F_n,\tilde H_n]\|_2\le 4\alpha_n
\end{equation}
for all $t\le t_1.$
On the other hand,
\begin{equation}\label{t0}
\tilde H_n+t\, V_{n,t}=\gamma_t(\tilde H_n)\longrightarrow \tilde H_n,  \ \ t\to +0.
\end{equation}

Take a $C^2$-function $g$ such that $g(x)=g_0(x)=x|x|$ for $|x|>1$ and $g^{(j)}(0)=0,$
$j=0,1,2.$
Denote
$$
g_t(x_0,x_1,x_2):=g^{[2]}\Big(\frac{x_0}{t},\frac{x_1}{t},\frac{x_2}{t}\Big),
\ \ \ t>0, \ \ x_0,x_1,x_2\in\mathbb R.
$$
We claim that
\begin{equation}\label{g_to_f}
\lim_{t\to +0}g_t(x_0,x_1,x_2)=\psi_{0}(x_0,x_1,x_2),
\ \ x_0,x_1,x_2\in\mathbb R.
\end{equation}
To prove this claim, we first observe, using the definition of $g_0$, that
\begin{equation}\label{psi}
\psi_0\Bigl(\frac{x_0}{t}, \frac{x_1}{t},\frac{x_2}{t},\Bigr)=
\psi_{0}(x_0,x_1,x_2),
\ \ x_0,x_1,x_2\in\mathbb R,\ t>0.
\end{equation}
Next we note that for any $x\in\mathbb R$,
$$
g\Bigl(\frac{x}{t}\Bigr) = g_0\Bigl(\frac{x}{t}\Bigr)
\quad\hbox{and}\quad
g'\Bigl(\frac{x}{t}\Bigr) = g_0'\Bigl(\frac{x}{t}\Bigr)
$$
for $t>0$ small enough. For $x=0$, this follows from the
fact that by assumption, $g(0)=g'(0)=0$.
From these properties, we deduce that for any $x_0,x_1\in\mathbb R$,
$$
g^{[1]}\Bigl(\frac{x_0}{t}, \frac{x_1}{t}\Bigr) =
g^{[1]}_0\Bigl(\frac{x_0}{t}, \frac{x_1}{t}\Bigr)
$$
for $t>0$ small enough.

In turn, this implies that if $x_0\neq x_1$ or $x_1\neq x_2$, then
$$
g^{[2]}\Bigl(\frac{x_0}{t}, \frac{x_1}{t}, \frac{x_2}{t}\Bigr) =
g^{[2]}_0\Bigl(\frac{x_0}{t}, \frac{x_1}{t}, \frac{x_2}{t}\Bigr)
$$
for $t>0$ small enough. According to (\ref{psi}), this implies that
$$
g^{[2]}\Bigl(\frac{x_0}{t}, \frac{x_1}{t}, \frac{x_2}{t}\Bigr) =
\psi_0(x_0,x_1,x_2)
$$
for $t>0$ small enough.

Consider now the case when $x_0=x_1=x_2$. For any $t>0$, we have
$$
g^{[2]}\Bigl(\frac{x_0}{t}, \frac{x_0}{t}, \frac{x_0}{t}\Bigr) =
g_0''\Bigl(\frac{x_0}{t}\Bigr).
$$
If $x_0>0$, then $g_0''\bigl(\frac{x_0}{t}\bigr) = 2$
for $t>0$ small enough, and if $x_0<0$,
then $g_0''\bigl(\frac{x_0}{t}\bigr) = -2$
for $t>0$ small enough. Furthermore, $g_0''(0)=0$ by assumption.
Hence
$$
g^{[2]}\Bigl(\frac{x_0}{t}, \frac{x_0}{t}, \frac{x_0}{t}\Bigr) =
\psi_0(x_0,x_0,x_0)
$$
for $t>0$ small enough. This completes the proof of (\ref{g_to_f}).

Applying subsequently Lemma \ref{lem_homogen_symbol}
with $a=\frac1t$, property (\ref{t0}) and Lemma \ref{lem_contin_symbol}, 
we obtain that
\begin{align*}
T_{g^{[2]}}^{\frac1{t}\tilde H_n+ V_{n,t}, \frac1{t}\tilde H_n,
\frac1{t}\tilde H_n}(V_{n,t}, V_{n,t})
&
= T_{g_t}^{\tilde H_n+ tV_{n,t}, \tilde H_n, \tilde H_n}(V_{n,t}, V_{n,t})\\
&
\longrightarrow
T_2\bigl(i[\tilde F_n,\tilde H_n],i[\tilde F_n,\tilde H_n]\bigr)
\end{align*}
when $t\to +0$.
Furthermore,
$$
T_2\bigl(i[\tilde F_n,\tilde H_n],i[\tilde F_n,\tilde H_n]\bigr) =
\alpha_n^2T_2\bigl(i[F_n,\tilde H_n],i[F_n,\tilde H_n]\bigr).
$$
By
\eqref{chooseN}, there is $t_2>0$ such that
$$
\bigl\|T_{g^{[2]}}^{\frac1{t}\tilde H_n+V_{n,t}, \frac1{t}\tilde H_n, \frac1{t}\tilde H_n}(V_{n,t},
V_{n,t})\bigr\|_1\ge {\rm const} \ \alpha_n^2\ \log n
$$ for all $t\le  t_2.$
Taking $t_n=\min\{t_1,t_2\},$ and setting
$$
\tilde A_n:= \frac1{t_n}\tilde H_n, \ \ \tilde B_n:= V_{n,t_n},
$$
we obtain that $\|\tilde B_{n}\|_2\le 4 \alpha_n$ (see \eqref{VntEstimate}) and
$$\|T_{g^{[2]}}^{\tilde A_n+\tilde B_n, \tilde A_n,
\tilde A_n}(\tilde B_n,\tilde B_n)\|_1\ge {\rm const}\, \alpha_n^2 \  \log n ,$$
for all $n\ge N$.
\end{proof}

\section{Answering Peller's problem} \label{sec_PellersProblem}

Let $\{\mathcal H_n\}_{n=1}^\infty$ be a sequence of finite dimensional
Hilbert spaces and consider their Hilbertian direct sum
$$
\mathcal H = \overset{2}{\oplus}_{n\geq 1} \mathcal H_n.
$$
Let $\{A_n\}_{n=1}^\infty$ be a sequence of self-adjoint
operators, with $A_n\in B(\mathcal H_n)$.
Let $A$ denote their direct sum (notation  
$A=\oplus_{n=1}^\infty A_n$). Namely $A$ is defined on the domain
$$
D(A)=\Bigl\{\{\xi_n\}_{n=1}^\infty\in\mathcal H\, :\,
\sum_{n=1}^\infty\Vert  A_n (\xi_n)\Vert^2\,<\infty\,\Bigr\},
$$
by setting $A(\xi) = \{A_n (\xi_n)\}_{n=1}^\infty$ for any
$\xi=\{\xi_n\}_{n=1}^\infty$ in $D(A)$. Then $A$ is a
self-adjoint (possibly unbounded) operator on $\mathcal H$.

Likewise we let $\{B_n\}_{n=1}^\infty$ be a sequence of self-adjoint
operators, with $B_n\in \mathcal S^2(\mathcal H_n)$, and
we set $B=\oplus_{n=1}^\infty B_n$. Assume further that 
$\sum_{n=1}^{\infty}\Vert B_n\Vert_2^2\,<\infty$. 
Then $B\in \mathcal S^2(\mathcal H)$ and
\begin{equation}\label{Sum}
\Vert B\Vert_2^2\,=\,\sum_{n=1}^{\infty}\Vert B_n\Vert_2^2.
\end{equation}

Let $f\colon\mathbb R\to\mathbb R$ be a $C^2$-function with a bounded second derivative.
Then $f^{[2]}$ is bounded, with $\Vert f^{[2]}\Vert_\infty =  \Vert f''\Vert_\infty$. 
Hence according to Theorem \ref{Formula-Taylor} and Lemma \ref{always bounded}, we have
$$
\Bigl\Vert 
f(A_n+B_n)-f(A_n)-\frac{d}{dt}
\Big(f(A_n+tB_n)\Big)\Big|_{t=0}\Bigr\Vert_2\leq \Vert f''\Vert_\infty\Vert B_n\Vert_2^2.
$$
We deduce that
\begin{align*}
\sum_{n=1}^\infty \,\Bigl\Vert 
f(A_n+B_n)-f(A_n)-\frac{d}{dt}
\Big(f(A_n+tB_n)\Big) & \Big|_{t=0}\Bigr\Vert_2^2\\
& \leq \Vert f''\Vert_\infty^2\,\Bigl(\sum_{n=1}\Vert B_n\Vert_2^2\Bigr)^2\,<\infty.
\end{align*}
Then we may define
\begin{multline*}
f(A+B)-f(A)-\frac{d}{dt}
\Big(f(A+tB)\Big)\Big|_{t=0}\\:=\bigoplus_{n=1}^\infty\Big(f(A_n+B_n)-f(A_n)-\frac{d}{dt}
\Big(f(A_n+tB_n)\Big)\Big|_{t=0}\Big),
\end{multline*}
which is an element of $\mathcal S^2(\mathcal H)$.

We note that the above construction can be carried out as well
in the case when the $\mathcal H_n$'s are infinite 
dimensional, provided that each $A_n$ is a bounded operator.

The following theorem answers Peller's problem \eqref{MainQuestion} in negative.

\begin{theorem}
There exists a function $f\in C^2(\mathbb R)$ with a bounded second derivative,  
a self-adjoint operator $A$ on $\mathcal H$ and a self-adjoint
$B\in \mathcal S^2(\mathcal H)$ as above such that
$$
f(A+B)-f(A)-\frac{d}{dt}
\Big(f(A+tB)\Big)\Big|_{t=0} \notin\mathcal S^1.
$$
\end{theorem}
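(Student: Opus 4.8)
The plan is to assemble the counterexample as an infinite Hilbertian direct sum of the finite-dimensional blocks produced by Lemma \ref{lemma_3}. Let $g\in C^2(\mathbb R)$ and $N\in\mathbb N$ be as in that lemma, and fix a sequence of positive reals $\{\alpha_n\}_{n\ge N}$ with
$$
\sum_{n\ge N}\alpha_n^2<\infty\qquad\hbox{and}\qquad\sum_{n\ge N}\alpha_n^2\,\log n=\infty;
$$
for instance $\alpha_n^2=\bigl(n(\log n)^2\bigr)^{-1}$ will do. Lemma \ref{lemma_3} then supplies, for each $n\ge N$, self-adjoint operators $\tilde A_n,\tilde B_n\in B(\mathbb C^{8n+4})$ (the $\tilde A_n$ being the finite-rank operators $\tfrac1{t_n}\tilde H_n$ appearing in its proof) such that $\|\tilde B_n\|_2\le 4\alpha_n$ and
$$
\bigl\|T_{g^{[2]}}^{\tilde A_n+\tilde B_n,\tilde A_n,\tilde A_n}(\tilde B_n,\tilde B_n)\bigr\|_1\ge{\rm const}\,\alpha_n^2\,\log n.
$$

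Next I would invoke Theorem \ref{Formula-Taylor}, which identifies this bilinear Schur multiplier with the second-order Taylor remainder: for each $n\ge N$,
$$
g(\tilde A_n+\tilde B_n)-g(\tilde A_n)-\frac{d}{dt}\bigl(g(\tilde A_n+t\tilde B_n)\bigr)\Big|_{t=0}=T_{g^{[2]}}^{\tilde A_n+\tilde B_n,\tilde A_n,\tilde A_n}(\tilde B_n,\tilde B_n).
$$
Setting $\mathcal H_n=\mathbb C^{8n+4}$, $A_n=\tilde A_n$ and $B_n=\tilde B_n$ for $n\ge N$, form $\mathcal H=\overset{2}{\oplus}_{n\ge N}\mathcal H_n$ together with $A=\oplus_{n\ge N}A_n$ and $B=\oplus_{n\ge N}B_n$ exactly as in the paragraph preceding the statement. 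Since $\sum_{n\ge N}\|B_n\|_2^2\le 16\sum_{n\ge N}\alpha_n^2<\infty$, we get $B\in\mathcal S^2(\mathcal H)$ by \eqref{Sum}, while $A$ is a self-adjoint (in general unbounded) operator each of whose summands is finite rank; hence the direct-sum construction described there applies and yields
$$
g(A+B)-g(A)-\frac{d}{dt}\bigl(g(A+tB)\bigr)\Big|_{t=0}=\bigoplus_{n\ge N}T_{g^{[2]}}^{A_n+B_n,A_n,A_n}(B_n,B_n)\in\mathcal S^2(\mathcal H).
$$

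Finally, since the summands act on mutually orthogonal subspaces, the singular values of this block-diagonal operator are the union of those of its blocks, so its $\mathcal S^1$-norm (finite or infinite) equals the sum of the $\mathcal S^1$-norms of the blocks; therefore
$$
\Bigl\|g(A+B)-g(A)-\frac{d}{dt}\bigl(g(A+tB)\bigr)\Big|_{t=0}\Bigr\|_1=\sum_{n\ge N}\bigl\|T_{g^{[2]}}^{A_n+B_n,A_n,A_n}(B_n,B_n)\bigr\|_1\ge{\rm const}\sum_{n\ge N}\alpha_n^2\,\log n=\infty,
$$
so the operator lies in $\mathcal S^2(\mathcal H)$ but not in $\mathcal S^1(\mathcal H)$. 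Taking $f=g$ finishes the proof. I do not expect a genuine obstacle at this stage: all the analytic difficulty has been absorbed into Lemma \ref{lemma_3} and Theorem \ref{Formula-Taylor}, and the only points needing a little care are the choice of the weights $\{\alpha_n\}$ (square-summable yet with $\sum\alpha_n^2\log n=\infty$) and the remark that the direct-sum definition of the Taylor remainder used in the preceding section is additive on $\mathcal S^1$, which is immediate from the block-diagonal structure.
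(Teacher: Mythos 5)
Your proposal is correct and follows essentially the same route as the paper: assemble the counterexample from the finite-dimensional blocks of Lemma \ref{lemma_3}, use Theorem \ref{Formula-Taylor} to identify the block-wise Taylor remainder with $T_{g^{[2]}}^{\tilde A_n+\tilde B_n,\tilde A_n,\tilde A_n}(\tilde B_n,\tilde B_n)$, and choose $\{\alpha_n\}$ square-summable with $\sum\alpha_n^2\log n=\infty$. The only (immaterial) difference is the specific weight — you take $\alpha_n^2=(n(\log n)^2)^{-1}$ while the paper takes $\alpha_n^2=(n(\log n)^{3/2})^{-1}$ — both of which satisfy the required summability conditions.
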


\begin{proof}
Take the integer $N\in\mathbb N$, the operators
$\tilde A_n$, $\tilde B_n$ and the function $g$ from Lemma
\ref{lemma_3}, applied with the sequence $\{\alpha_n\}_{n\geq N}$ defined by
$$
\alpha_n=\,\frac1{\sqrt{n \ \log^{3/2} n}}\,.
$$
Let $\mathcal H_n = \ell^2_{8n+4}$ and let $\mathcal H=\overset{2}{\oplus}_{n\geq N} \mathcal H_n$.
Then let $A=\oplus_{n=N}^\infty A_n$ and $B=\oplus_{n=N}^\infty B_n$ be the corresponding direct sums.
Then the self-adjoint operator $B$ belongs
to $S^2(\mathcal H)$. Indeed, it follows from (\ref{Sum}) and
Lemma~\ref{lemma_3} that
$$
\|B\|_2^2=\sum_{n=N}^\infty \|\tilde B_{n}\|_2^2\le 16
\sum_{n=N}^\infty \alpha_n^2 =
\sum_{n=N}^\infty\frac{16}{{n \ \log^{3/2} n} }<\infty.
$$

On the other hand, by \eqref{Taylor_MOI} and  Lemma \ref{lemma_3}, we have
\begin{align*}\label{series}
\Bigl\|g(A+B)- & g(A)-\frac{d}{dt}\bigl(g(A+tB)
\bigr)\Big|_{t=0}\Bigr\|_1\\
&
=\sum_{n=N}^\infty\Big\|g(\tilde A_n+\tilde B_{n})-g(\tilde A_n)-
\frac{d}{dt}\bigl(g(\tilde A+t\tilde B_{n})\bigr)\Big|_{t=0}\Big\|_1\\
&
=\sum_{n=N}^\infty
\left\| T_{g^{[2]}}^{\tilde A_n+\tilde B_{n},\tilde A_n,\tilde A_n}
\Bigl(\tilde B_{n},\, \tilde B_{n}\Bigr) \right\|_1\\
&
\geq \,{\rm const}\sum_{n=N}^\infty \alpha_n^2 \, \log n\\
&
={\rm const}
\sum_{n=N}^\infty \frac1{{n \ \log^{1/2} n} }=\infty.
\end{align*}
\end{proof}

\bigskip\noindent
\textbf{Acknowledgement}. The authors C.C and C.L have been supported by the research program ANR 2011 BS01 008 01
and by the ``Conseil r\'egional de Franche-Comt\'e".
The authors D.P., F.S and A.T. have been supported by the Australian Research Council grant DP150100920.

\end{document}